\newcites{S}{References}
\newtheorem{theorem}{Theorem}[section]
\newtheorem{lemma}[theorem]{Lemma}
\newtheorem{proposition}[theorem]{Proposition}
\newtheorem{corollary}[theorem]{Corollary}
\theoremstyle{definition}
\newtheorem{definition}{Definition}
\newtheorem{example}{Example}
\newenvironment{assumption*}[1]{\assumption}{\endassumption}
\newtheorem{remark}{Remark}
\numberwithin{equation}{section}
\newenvironment{equations}{\equation\aligned}{\endaligned\endequation}
\newcommand*{\rom}[1]{\expandafter\@slowromancap\romannumeral #1@}
\DeclareMathOperator*{\argmin}{\arg\!\min}
\newcommand{\floor}[1]{\left\lfloor {#1} \right\rfloor}
\newcommand{\ceil}[1]{\left\lceil {#1} \right\rceil}
\def\iidsim{\stackrel{\textup{iid}}{\sim}}
\def\indsim{\stackrel{\textup{ind}}{\sim}}
\def\ind{\mathbbm{1}}
\def\var{\mathsf{Var}}
\def\kl{\mathsf{KL}}
\def\BINOM{\textsc{binom}}
\def\GAMMA{\textsc{gamma}}
\def\BETA{\textsc{beta}}
\def\Pois{\textsc{poisson}}
\def\unif{\textsc{unif}}
\def\Dir{\textsc{dir}}
\def\DP{\textsc{dp}}
\def\d{\textup{d}}
\def\e{\textup{e}}
\def\N{\textsc{N}}
\def\E{\mathsf{E}}
\def\P{\mathsf{P}}
\def\R{\mathbb{R}}
\def\greekvectors#1{%
 \@for\next:=#1\do{%
    \def\X##1;{\expandafter\def\csname b##1\endcsname{\bm{\csname##1\endcsname}}}
    \expandafter\X\next;}
 \@for\next:=#1\do{%
    \def\X##1;{\expandafter\def\csname h##1\endcsname{\widehat{\csname##1\endcsname}}}
    \expandafter\X\next;}
 \@for\next:=#1\do{%
    \def\X##1;{\expandafter\def\csname c##1\endcsname{\check{\csname##1\endcsname}}}
    \expandafter\X\next;}
 \@for\next:=#1\do{%
    \def\X##1;{\expandafter\def\csname hb##1\endcsname{\widehat{\bm{\csname##1\endcsname}}}}
    \expandafter\X\next;}
}
    \def\command@factory#1{\expandafter\def\csname #1\endcsname{\mathbf{#1}} }
    \def\command@factory#1{\expandafter\def\csname b#1\endcsname{\mathbbm{#1}} }
    \def\command@factory#1{\expandafter\def\csname c#1\endcsname{\mathcal{#1}} }
    \def\command@factory#1{\expandafter\def\csname s#1\endcsname{\mathsf{#1}} }
    \def\command@factory#1{\expandafter\def\csname f#1\endcsname{\mathfrak{#1}} }
\begin{document}

\begin{frontmatter}
\title{Optimal Bayesian estimation of Gaussian mixtures with growing number of components}
\runtitle{Optimal Bayesian estimation of Gaussian mixtures}

\begin{aug}
\author[A]{\fnms{Ilsang} \snm{Ohn}\ead[label=e1,mark]{ilsang.ohn@inha.ac.kr}}
\and
\author[B]{\fnms{Lizhen} \snm{Lin}\ead[label=e2,mark]{lizhen.lin@nd.edu}}

\address[A]{Department of Statistics, Inha University, Incheon, Republic of Korea.
\printead{e1}}

\address[B]{Department of Applied and Computational Mathematics and Statistics, The University of Notre Dame, Indiana, USA.
\printead{e2}}
\end{aug}

\begin{abstract}
We study Bayesian estimation of finite mixture models in a general setup  where the number of components is unknown and allowed to grow with the sample size. An assumption on growing number of components is a natural one as the degree of heterogeneity present in the sample can grow  and new components can arise as sample size increases,  allowing  full flexibility in modeling the complexity of data. This however will lead to a high-dimensional model  which poses great challenges for estimation.  We novelly employ the idea of a sample size dependent prior in a Bayesian model and  establish a number of important theoretical results. We first show that under mild conditions on the prior, the posterior distribution concentrates around the true mixing distribution at a near optimal rate with respect to the Wasserstein distance. Under a separation condition on the true mixing distribution, we further show that a better and adaptive convergence rate can be achieved, and the number of components can be consistently estimated. Furthermore, we derive optimal convergence rates for the higher-order mixture models where the number of components diverges arbitrarily fast. In addition, we suggest a simple recipe for using Dirichlet process (DP) mixture prior for estimating the finite mixture models and provide theoretical guarantees. In particular, we provide a novel solution for  adopting the number of  clusters in a DP mixture model  as an estimate of the number of components in a finite mixture model. Simulation study and real data applications are carried out demonstrating the utilities of our method. 
\end{abstract}

\begin{keyword}
\kwd{Gaussian mixtures}
\kwd{finite mixture models}
\kwd{growing number of components}
\kwd{mixing distribution estimation}
\kwd{posterior contraction rates}
\kwd{Dirichlet processes}
\end{keyword}

\end{frontmatter}

\section{Introduction}

Finite mixture models are powerful tools for modeling heterogeneous data, which  have been used in a wide range of applications in statistics and machine learning  including density estimation \citep{kruijer2010adaptive}, clustering \citep{fraley2002model}, document modeling \citep{blei2003latent}, image generation \citep{richardson2018gans} and designing generative adversarial networks \citep{eghbal2019mixture}, just to name a few. To date, a large number of methods, both frequentist and Bayesian,  have been proposed in the literature for various estimation problems related to  finite mixture models. Rather than listing a large body of related work here, we refer the readers to the book \cite{fruhwirth2019handbook} and a review paper \cite{mclachlan2019finite} for recent advances on finite mixture modeling. Our work focuses on the estimation of the finite mixture itself, i.e., estimating the parameters of a mixture model such as the mixing distribution and the number of mixing components, from a Bayesian perspective. 
Although a number of important Bayesian methods have dealt with the problem of finite mixture estimation, many interesting questions remain open.  
Most of the Bayesian work in the literature assume the number of components is either known or fixed. The minimax optimal convergence rate for estimating the mixing distribution  has not been achieved by Bayesian methods even for the fixed set up. Further,  posterior consistency on the number of components has not been established except for some special cases. This paper aims to bridge these gaps through establishing a number of new theoretical results under the general framework of finite mixture modeling with growing number of components. Allowing the number of components $k^\star$ to grow is a natural assumption and even required  in many situations, for instance, 
in topic modelling \citep{bing2018fast} and computer vision \citep{greggio2012fast}, where we expect that when the size of the sample grows so will the degree of heterogeneity present in the sample.

To have a better understanding of some of the theoretical gaps, it is important to review some of the major developments in the literature. A pioneering work on characterizing convergence rates for mixing distribution estimation in finite mixture models is due to \citet{chen1995optimal} which established a \textit{point-wise} convergence rate $C_{\nu^\star} n^{-1/4}$ for estimating the mixing distribution under the $L_1$ distance, where $n$ denotes the sample size and the $C_{\nu^\star}$ is a constant depending on the true mixing distribution $\nu^\star$.\footnote{ \citet{chen1995optimal} did not realize that the multiplicative constant $C_{\nu^\star}$  depends on the true mixing distribution, thus they argued that the rate $n^{-1/4}$ is the minimax optimal rate. This  mistake was later corrected by \citet{heinrich2018strong}.} This convergence result holds for the so-called \textit{strongly identifiable} mixtures which include the Gaussian location mixtures as special cases, and so do those stated below. \citet{nguyen2013convergence} and \citet{scricciolo2017bayesian} derived the $n^{-1/4}$ point-wise  posterior contraction rate under the second-order Wasserstein distance. \citet{ho2016strong} proved that the maximum likelihood estimator can also achieve this point-wise rate. Under the first-order Wasserstein distance, a better point-wise convergence rate $C_{\nu^\star} n^{-1/2}$ can be obtained. \citet{heinrich2018strong}, \citet{ho2020robust} and \citet{guha2019posterior} established the  $n^{-1/2}$ point-wise rate for the minimum Kolmogorov distance estimator, minimum Hellinger distance estimator and Bayesian procedure with the mixture of finite mixtures (MFM) prior, respectively. On the other hand, for the continuous mixtures where the mixing distribution admits a density function, \citet{martin2012convergence} derived a near $n^{-1/2}$ point-wise rate of the mixing density estimation for their predictive recursion algorithm \citep{newton2002nonparametric, tokdar2009consistency}.

However, due to a lack of uniformity in the constant $C_{\nu^\star}$, their analysis has been restricted to the fixed truth setup, with the number of components assumed to be either known or fixed. Also note that these  point-wise rates are not upper bounds of the actual minimax optimal rates of mixing distribution estimation, which were later derived by \citet{heinrich2018strong}. It was shown that the minimax optimal convergence rate of mixing distribution estimation for strongly identifiable mixtures, is of order $n^{-1/(4(k^\star-k_0)+2)}$, where $k^\star$ and $k_0$ denote the \textit{total} number of components and the number of \textit{well-separated} components  of the true mixing distribution, respectively. In other words, the minimax rate deteriorates with the factor $k^\star-k_0$ which can be viewed as the degree of overspecification. \citet{heinrich2018strong} also proposed a minimax optimal minimum Kolmogorov distance estimator which  however can be computationally expensive. More recently,  \citet{wu2018optimal} proposed a computationally tractable estimator called the denoised method of moments estimator for Gaussian mixture models, and showed that this estimator achieves the minimax rate. However, these minimax optimal estimators require the knowledge of the number of components $k^\star$,  which is not practical. On the other hand, no Bayesian procedure has yet been able to yield a minimax optimal rate.

In general, one does not have the prior knowledge on the number of components, and selecting an appropriate value of the number of components is a crucial step in providing accurate estimates of the true mixing distribution.  With too many components, one may suffer from large variances whereas too few components may lead to biased estimators. Also estimating the number of components may be of interest itself in practice especially when each component has a physical interpretation. A widely used approach to choose the number of components is based on a model selection criterion before estimating parameters, and a few consistent model selection criteria are available in the literature such as complete likelihood \citep{biernacki2000assessing}, the Bayesian information criteria (BIC) \citep{keribin2000consistent}, the singular Bayesian information criteria (sBIC) \citep{drton2017bayesian} and the Bayes factor \citep{chambaz2008bounds}.

A Bayesian approach is an attractive alternative due to its ability to estimate both the number of components and parameters in a unified manner. A natural strategy to infer a mixture model with an unknown number of components is to also impose a prior on the number of components $k$. By doing so, it  provides  a way of not only choosing the \textit{best} number of components (i.e., model selection), but also combining results from different mixture models with possibly varying number of components (i.e., model averaging). One notable disadvantage for such models is that  posterior computations may be challenging, since it requires developing  Monte Carlo Markov chain (MCMC) algorithms for sampling from a parameter space of varying dimensions, which often results in poor mixing or slow convergence of the Markov chain to the stationary distribution.  Several MCMC methods have been proposed  to circumvent this issue including \cite{richardson1997bayesian, stephens2000bayesian, nobile2007bayesian, miller2018mixture}. On the theoretical side, \citet{guha2019posterior} derived the $n^{-1/2}$ point-wise posterior contraction rate for this type of prior distribution. They also obtained posterior consistency of the \textit{fixed} number of components under the strong identifiability condition. Another promising approach is to use over-fitted mixtures. This approach considers a mixture model with the number of components larger than the true one and estimates the true model by discarding spurious components. \citet{rousseau2011asymptotic} studied asymptotic properties of the over-fitted mixtures and proved with a  prior on  weights of a mixture using a  Dirichlet distribution with a suitably selected hyperparameter, the spurious components vanishes asymptotically at the rate $n^{-1/2}\log^a n$ for some $a>0$ under the posterior distribution.

Our work considers a Bayesian procedure which imposes appropriate priors on both the number of components and the mixing parameters in a  general setup where the number of the mixing components is allowed to grow with sample size. We consider a general class of priors and provide assumptions on the prior on the number of components, the mixing weights as well as the atoms of the mixing distribution, that lead to optimal convergence of the posterior. Our work contributes in both methodological and theoretical development, and obtains collection of important results which can be summarized in the following. 

    \begin{enumerate}
        \item We design sample size dependent priors  and provide mild and explicit conditions on them, based on which  near-optimal posterior contraction rate of the mixing distribution estimation is derived with respect to the Wasserstein distance (\cref{thm:mixing}).  Under a separation condition on the mixing components, we further show that a better and adaptive optimal  posterior contraction rate can be obtained (\cref{thm:mixing_adap} and \cref{thm:mixing_local}). To our knowledge, this is the first minimax optimality result in the Bayesian literature.
        
        \item We derive the posterior consistency of the number of components even when the true number of components diverges (\cref{thm:k_under}). To the best of our knowledge, this is the first result on the posterior consistency of the number of components in a general setup where the true mixing distribution varies as the sample size grows.
        
        \item We propose an optimal Bayesian procedure for estimating higher-order mixture models in which the number of components diverges arbitrarily fast (\cref{thm:mixing_high}).
        
        \item  We extend our analysis to general mixture models beyond Gaussian location mixtures with growing number of components. We show that the proposed Bayesian procedure maintain the same theoretical properties even in this setup.
 
        \item We investigate some theoretical properties of the Dirichlet process (DP)  mixture models and provide a pathway for using DP models for inference of the finite mixture models (\cref{sec:dp}). The DP prior for the mixing distribution, which only generates infinite mixtures, cannot provide a meaningful posterior distribution for the number of components. We suggest a recipe for using the posterior distribution of \textit{the number of clusters}  as the estimate of the true number of components and provide theoretical guarantees.  (\cref{thm:k_upper_dp}). For mixing distribution estimation, the performance of the DP is inferior in view of the convergence rate (\cref{thm:mixing_dp}).
    \end{enumerate}

The rest of this paper is organized as follows. In \cref{sec:main}, we introduce the notation, finite Gaussian location mixture models, and the prior distribution. Then we present the main results of the paper, including optimal posterior contraction rates of the mixing distribution, and posterior consistency of the number of components. Moreover, we study theoretical properties of the proposed Bayesian procedures for estimating general mixture models. In \cref{sec:dp}, we analyze  theoretical properties of  DP mixture models for estimating the finite Gaussian mixtures. In \cref{sec:numerical}, numerical studies  including both simulation study and real data analysis are conducted for illustrating our theory. Proofs are deferred to \cref{sec:proof}. In \cref{sec:heinrich}, we provide another theoretical analysis for general mixture models with a fixed number of components under different conditions and proof techniques.

\section{Main results}
\label{sec:main}

\subsection{Notations}

We first introduce some notation that will be used throughout the paper. We denote by $\ind(\cdot)$ the indicator function. For a positive integer $n\in\bN$, we let $[n]:=\{1,2,\dots,n\}$. 
For a $d$-dimensional real vector $\x:=(x_1,\dots, x_d)\in\R^d$, we denote $\|\x\|_0:=\sum_{j=1}^d\ind(x_j\neq0)$ and $\|\x\|_\infty:=\max_{1\le j\le d}|x_j|$.  For two positive sequences $\{a_n\}_{n\in \mathbb{N}}$ and $\{b_n\}_{n\in \mathbb{N}}$,  we write $a_n\lesssim b_n$ if there exists a positive constant $C>0$ such that $a_n\le Cb_n$ for any $n\in \mathbb{N}$. Moreover, we write $a_n\gtrsim b_n$ if $b_n\lesssim a_n$ and write $a_n\asymp b_n$ if $a_n\lesssim b_n$  and  $a_n\gtrsim b_n$.   For $n$ random variables $X_1,\dots, X_n$, we use the shorthand notation $X_{1:n}:=(X_1, \dots, X_n).$   Let $\delta_\theta$ denote a Dirac measure at $\theta$. 

Let $(\fX, \cX)$ be a measurable space equipped with a Lebesgue measure $\lambda$. Let $\cP(\fX)$ be the set of all distributions supported on $\fX.$ For $G\in\cP(\fX)$, let $\P_G$ denote the probability or the expectation under the probability measure $G$. We denote by $p_G$ the probability density function of $G$ with respect to the Lebesgue measure $\lambda$ if it exists.  For $n\in\bN$, let $\P_G^{(n)}$ be the probability or the expectation under the product measure and $p_G^{(n)}$ its density function. For two probability densities $p_1$ and $p_2$, we denote by $\kl(p_1, p_2)$ the Kullback-Leibler (KL) divergence from $p_2$ to $p_1$ and by $\kl_2(p_1,p_2)$ the KL variations, i.e., $  \kl(p_1,p_2):=\int \log\del{\frac{p_1(x)}{p_2(x)}}p_1(x)\lambda(\d x)$ and $\kl_2(p_1,p_2):=\int \cbr{\log\del{\frac{p_1(x)}{p_2(x)}}}^2p_1(x)\lambda(\d x).$ 
For $\zeta>0$, a space of certain distributions $\cG$ and a distribution $G_0\in\cG$, we define a $\zeta$-KL neighborhood of $G_0$ by
    \begin{equation*}
        \cB_{\kl}(\zeta, G_0, \cG):=\cbr{G\in\cG: \kl(p_{G_0}, p_G)<\zeta^2,\kl_2( p_{G_0}, p_G)<\zeta^2}.
    \end{equation*}
For a real-valued function $f$ on $\fX$, let $\|f\|_q:=\del{\int |f(x)|^q\lambda(\d x)}^{1/q}$ for $q>0$ and $\|f\|_\infty:=\sup_{x\in\fX}|f(x)|.$ For $G\in\cP(\R)$, we denote by $m_{h}(G)$ the $h$-th moment of $G$, i.e.,  $m_{h}(G):=\int x^h \P_G(\d x).$ The $r$-th moment vector is defined by $m_{1:r}(G):=\del{ m_{1}(G), \cdots, m_r(G)}.$

\subsection{Gaussian location mixtures}

In this paper,  we  initially consider the Gaussian location mixture model in one dimension:
    \begin{equation}
    \label{eq:gm_model}
        X_1,\dots, X_n \iidsim \sum_{j=1}^k w_j\N(\theta_j, \sigma^2),
    \end{equation}
where $\theta_1,\dots, \theta_k\in \R$ are the \textit{atoms} and $(w_1,\dots, w_k)\in\Delta_k$ are the mixing \textit{weights}. Here 
we define
    \begin{equation*}
        \Delta_k:=\cbr{(w_1,\dots, w_k)\in[0,1]^k:\sum_{j=1}^kw_j=1}
    \end{equation*}
for $k\in\bN.$  We assume that the variance $\sigma^2$ is known and without loss of generality  $\sigma^2=1$. With the convolution denoted with the symbol $*$, we simply write
    \begin{equation*}
        \nu*\Phi=\sum_{j=1}^k w_j\N(\theta_j, 1)
    \end{equation*}
for the mixing distribution  $\nu:=\sum_{j=1}^kw_j\delta_{\theta_j}$, where $\Phi$ denotes the standard normal distribution. For a set $\Theta\subset \R$ and $k\in\bN$, we define the set of $k$-atomic distributions
    \begin{equation*}
        \cM_k(\Theta):=\cbr{\sum_{j=1}^kw_j\delta_{\theta_j}:(w_1,\dots, w_k)\in\Delta_k, \theta_1,\dots, \theta_k\in\Theta}.
    \end{equation*}
Note that $\cM_{k}(\Theta)\subset \cM_{k+1}(\Theta)$ for every $k\in\bN$. The parameter space is given by $\cM(\Theta):=\bigcup_{k\in\bN}\cM_k(\Theta)$.  Note that $\cM(\Theta)\subset\cP(\Theta).$

For mixture models, the Wasserstein distance is widely used as a performance measure for the mixing distribution estimation. To define the Wasserstein distance between two atomic distributions, we first define
    \begin{align*}
        \cQ(w,w'):= \cbr{(p_{jh})_{j\in[k], h\in[k']}\in [0,1]^{k\times k'}: \sum_{h=1}^{k'}p_{jh}=w_j,\sum_{j=1}^{k}p_{jh}=w_{h}', \forall j\in[k], h\in[k']},
    \end{align*}
for given two weight vectors  $w\in\Delta_k$ and $w'\in\Delta_{k'}$,  which is a set of joint distributions on $[k]\times[k']$ with marginal distributions $w$ and $w'$. For any $q\ge 1$, the $q$-th order Wasserstein distance between two atomic distributions $\nu:=\sum_{j=1}^kw_j\delta_{\theta_j}$ and $\nu':=\sum_{h=1}^{k'}w_{h}'\delta_{\theta_{h}'}$ is defined as
    \begin{equation*}
        \sW_q(\nu,\nu'):=\inf_{p\in\cQ(w,w')}\del{\sum_{j=1}^k\sum_{h=1}^{k'}p_{jh}|\theta_j-\theta'_{h}|^q}^{1/q}.
    \end{equation*}

\subsection{Prior distribution}
\label{subsec:prior}

We first assume that the true data generating process is given as $\nu^\star*\Phi$ where $\nu^\star\in \cM_{k^\star}([-L,L])$, $L>0$ for some $k^\star\in\bN$, which is the true number of mixing components. For simplicity, we write $\cM_{k}:=\cM_{k}([-L,L])$ for each $k\in\bN$ and $\cM:=\cM([-L,L]):=\cup_{k=1}^\infty \cM_k([-L,L])$. We consider a general model in which  the true mixing distribution $\nu^\star\in\cM_{k^\star}$ can vary with sample size $n$ as well as the true number of components $k^\star$ can vary with $n$. This is a critical difference from the existing Bayesian literature on mixture models which assumed a fixed true mixing distribution \citep{nguyen2013convergence, scricciolo2017bayesian, guha2019posterior}.

We assume an upper bound $\bar{k}_n\lesssim \log n/\log \log n$ on the true number of components $k^\star$. This assumption alleviates some technical difficulties, and can be justified by the following remark. Since the minimax optimal convergence rate of mixing distribution estimation for large mixtures $\nu^\star\in\cM_{k^\star}$ with $k^\star\asymp \log n/\log \log n$ is the same as the one for mixtures $\nu^\star\in\cM$ with any order, which is a  slow rate of  $\log \log n/\log n$ (See Proposition 9 of \cite{wu2018optimal}), without assuming $k^\star\lesssim \log n/\log \log n$, we may not obtain improved convergence rates.  We will also show that one can develop a Bayesian procedure that attains this minimax rate without knowing the upper bound of the true number of components. See \cref{thm:mixing_high} in \cref{subsec:higher-order}.

We now introduce our prior distribution on the finite Gaussian mixture model. The prior distribution first samples the number of components $k$ from  $\Pi(k)$ and then samples the atoms $\theta\in[-L, L]^k$ and weights $w\in\Delta_k$ from $\Pi(\theta| k)$ and $\Pi(w|k)$, respectively. Thus the prior distribution induces a distribution on $\cM=\cup_{k\in\bN}\cM_{k}$.

We impose the following conditions on the prior.

\begin{assumption*}{P}
\label{assume:prior}
Recall that $\bar{k}_n$ is the known upper bound on the true number of components. The prior distribution $\Pi$ satisfies the following conditions:
\begin{enumerate}[label=(P\arabic*)]
    \item \label{p_a1} The prior distribution on the number of components $k$ is sample size dependent. There are a constant $c_1>0$ and a sufficiently large constant $A>0$ such that for any sample size $n\in\bN$ and any $k^\circ\in\bN$, 
            \begin{equation}
            \label{eq:prior_k1}
            \frac{\Pi(k=k^\circ+1)}{\Pi(k=k^\circ)}\le c_1\e^{-A\bar{k}_n\log n}.
        \end{equation}  
    Additionally, there are constants $c_2>0$ and $c_3>0$ such that for any $n\in\bN$ and any $k^\dag\in[\bar{k}_n]$,
        \begin{equation}
          \label{eq:prior_k2}
            \Pi(k=k^\dag)\ge c_2 \e^{-(c_3\bar{k}_n\log n)k^\dag}.
        \end{equation}
        
    \item \label{p_a2} For any $k\in\bN$ and any $(w_1^0,\dots, w_k^0)\in\Delta_k$, there are positive constants $c_4$ and $c_5$ such that for any $\eta\in(0,1/k)$,
        \begin{equation}
             \Pi\del{\sum_{j=1}^{k}|w_j-w_j^0|\le \eta \big|k} \ge c_4\eta^{c_5k}.
        \end{equation}
        
    \item \label{p_a3}  For any $k\in\bN$ and any $\theta^0\in[-L,L]^k$,  there are positive constants $c_6$ and $c_7$  such that for any $\eta>0$,
        \begin{equation}
            \label{eq:prior_theta}
             \Pi\del{\max_{1\le j\le k}|\theta_j-\theta^0_j|\le \eta\big|k} \ge c_6\eta^{c_7k}.
        \end{equation}
\end{enumerate}
\end{assumption*}

In \ref{p_a1}, we require a prior distribution to heavily penalize mixture models with a large number of components, and further assume that the degree of the penalization becomes more severe of an appropriate order as sample size grows. This enables the resulting posterior distribution not to overestimate the  number of components.

\begin{remark}

The idea of using a same size dependent prior to control model complexity is not new and have frequently appeared in the Bayesian literature, e.g., prior distributions on sparsity in the multivariate normal mean model \cite{castillo2012needles}, sparsity in  nonparametric regression \cite{jiang2021variable}, the number of communities in the stochastic block model \cite{gao2020general} and the number of factors in the factor model \cite{ohn2021posterior}. 

\end{remark}

We now provide  some examples of prior distributions satisfying \cref{assume:prior}. In the following examples, the constant $A>0$ is the same as the one appearing in \labelcref{eq:prior_k1}.

\begin{example}
\label{eg:mfm1}
The mixture of finite mixture (MFM) prior considered in \cite{miller2018mixture, guha2019posterior, kruijer2010adaptive} is a hierarchical prior consisting of distributions on the number of components, the weights and the atoms. \cref{assume:prior} is met by the MFM prior with appropriate choices of each distribution. The geometric distribution with probability mass function $\Pi(k)=(1-p_n)^{k-1}p_n$ on $k$ with $p_n:=1-a\exp(-A\bar{k}_n\log n)$ for arbitrary $a>0$, satisfies \ref{p_a1}. Also, the Poisson-like distribution on $\bN$ such that $\Pi(k)=\e^{-\lambda_n} \lambda_n^{k-1}/(k-1)!$ with $\lambda_n:=a\exp(-A\bar{k}_n\log n)$ for arbitrary $a>0$ satisfies \ref{p_a1}. The Dirichlet distribution  $\Dir(\kappa_1,\dots,\kappa_k)$ on the mixing weights  with $\kappa_j\in(\kappa_0, 1)$ for every $j\in [k]$ and some $\kappa_0\in(0,1)$ satisfies \ref{p_a2}, see \cref{lem:dirichlet}.  If the prior distribution on $\theta$ behaves like a uniform distribution up to a multiplicative constant, then  \ref{p_a3} holds. 
\end{example}

\begin{example}
\label{eg:mfm3}
Consider a Binomial prior distribution on the number of components such that $k-1\sim \BINOM(\bar{k}_n-1, p_n)$ with $p_n:=a\exp(-2A\bar{k}_n\log n)$ for arbitrary $a>0$. Then this prior satisfies \labelcref{eq:prior_k1} since $\binom{\bar{k}_n-1}{k^\circ}/\binom{\bar{k}_n-1}{k^\circ-1}\le \bar{k}_n\lesssim \e^{\log \log n}$ and $1-p_n\le 1$. Also it satisfies \labelcref{eq:prior_k2} since   $1-p_n\gtrsim  1$. The MFM prior with this Binomial prior distribution satisfies \cref{assume:prior}.
\end{example}

\begin{example}
The spike and slab prior distribution on the \textit{unnormalized} weights can satisfy \ref{p_a1} and \ref{p_a2}. Suppose that we consider an over-fitted mixture model $\nu=\sum_{j=1}^{\bar{k}_n}w_j\delta_{\theta_j}$. Let $S:=\{j\in[\bar{k}_n]:w_j>0\}$, a set of indices corresponding to nonzero weights. Then we can write $\nu=\sum_{j\in S}w_j\delta_{\theta_j}$. Let $\tilde{w}\equiv (\tilde{w_j})_{j\in[\bar{k}_n]}$  be the independent random variables where $\tilde w_1$ is generated from $\GAMMA(\kappa, b)$ and the other variables, i.e., $\tilde w_2,\dots, \tilde w_{\bar{k}_n}$, are generated from a spike and slab distribution $(1-p_n)\delta_0+p_n\GAMMA(\kappa, b)$ with $p_n:=a\exp(-2A\bar{k}_n\log n)$ for  $a>0$, $b>0$ and $\kappa\in(0,1)$.  If we define the number of components as the number of nonzero elements in $\tilde{w}$ and the weights as a normalized version of $(\tilde{w}_j)_{j\in S}$, i.e., $k:=\|\tilde{w}\|_0$ and $w_j:=\tilde{w}_j/\|\tilde{w}\|_1$ for $j\in S$, then $k-1$ follows  $\BINOM(\bar{k}_n-1, p_n)$ and $(w_j)_{j\in S}$ follows  $\Dir(\kappa,\dots,\kappa)$. Thus \cref{assume:prior} holds by Examples \ref{eg:mfm1} and \ref{eg:mfm3}.
\end{example}

\subsection{Posterior concentration}
\label{subsec:post_con}

In this section, we present concentration properties of the posterior distribution $\Pi(\cdot|X_{1:n})$ defined below, with the prior given in \cref{subsec:prior} and the data from the Gaussian mixture model in \labelcref{eq:gm_model}:
    \begin{equation}
        \Pi(\d \nu|X_{1:n}):= \frac{p_{\nu*\Phi}^{(n)}(X_{1:n})\Pi(\d\nu)}{\int p_{\nu*\Phi}^{(n)}(X_{1:n})\Pi(\d\nu)}.
    \end{equation}
    
We first show that our posterior distribution does not \textit{overestimate} the number of components.

\begin{theorem}
\label{thm:k_upper}
Assume that 
$k^\star\le\bar{k}_n\lesssim \log n/\log \log n$. Then with the prior distribution $\Pi$ satisfying  \cref{assume:prior}, we have
    \begin{equation}
        \inf_{\nu^\star\in \cM_{k^\star}}\P_{\nu^\star*\Phi}^{(n)}\sbr{\Pi(\nu\in \cM_{k^\star}|X_{1:n})}\to1.
    \end{equation}
\end{theorem}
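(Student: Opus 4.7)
The plan is to bound $\Pi(k > k^\star \mid X_{1:n})$ by the ratio of an integrated likelihood ratio on $\{k > k^\star\}$ to an evidence lower bound near $\nu^\star$, and to show that the geometric prior decay \labelcref{eq:prior_k1} overwhelms the denominator. Setting $r_\nu := p^{(n)}_{\nu*\Phi}(X_{1:n})/p^{(n)}_{\nu^\star*\Phi}(X_{1:n})$, we have
\[
\Pi(k > k^\star \mid X_{1:n}) = \frac{\sum_{k' > k^\star} \Pi(k = k') \int_{\cM_{k'}} r_\nu \, d\Pi(\nu \mid k')}{\int r_\nu \, d\Pi(\nu)}.
\]

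For the numerator, the identity $\E^{(n)}_{\nu^\star*\Phi} r_\nu = 1$ combined with Fubini gives that its expectation equals $\Pi(k > k^\star)$, and telescoping \labelcref{eq:prior_k1} shows $\Pi(k > k^\star) \lesssim e^{-A\bar{k}_n \log n}\Pi(k = k^\star)$. Markov's inequality therefore yields the numerator bound $M_n e^{-A\bar{k}_n \log n}\Pi(k = k^\star)$ with probability at least $1 - 1/M_n$, for any $M_n \to \infty$. For the denominator, set $\zeta_n^2 \asymp k^\star \log n / n$. Since the atoms are confined to $[-L,L]$, a standard computation gives $\kl(p_{\nu^\star*\Phi}, p_{\nu*\Phi}) \vee \kl_2(p_{\nu^\star*\Phi}, p_{\nu*\Phi}) \lesssim \eta^2$ whenever $\nu$ matches $\nu^\star$ atom-wise within $\eta$ and weight-wise within $\eta^2$; combining with \ref{p_a2}--\ref{p_a3} at width $\asymp \zeta_n$ conditionally on $k = k^\star$ yields
\[
\Pi(\cB_{\kl}(\zeta_n, \nu^\star*\Phi, \cM)) \gtrsim \Pi(k = k^\star) \zeta_n^{C k^\star} \gtrsim \Pi(k = k^\star) e^{-C' k^\star \log n}.
\]
The standard evidence lower bound (Lemma 8.1 of Ghosal--Ghosh--van der Vaart) then gives, with probability tending to one, $\int r_\nu \, d\Pi(\nu) \ge e^{-2n\zeta_n^2} \Pi(\cB_{\kl}(\zeta_n, \nu^\star*\Phi, \cM)) \ge \Pi(k = k^\star) e^{-(2C_0 + C') k^\star \log n}$.

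Combining the two estimates on the intersection event,
\[
\Pi(k > k^\star \mid X_{1:n}) \lesssim M_n \exp\!\bigl(-(A\bar{k}_n - (2C_0 + C') k^\star) \log n\bigr),
\]
which vanishes once $A$ is chosen large enough relative to the absolute constants $C_0, C'$ (permitted by \cref{assume:prior}) and $M_n \to \infty$ slowly, using $k^\star \le \bar{k}_n$. The main technical obstacle is the KL small-ball estimate with the correct linear-in-$k^\star$ dependence and the $\kl_2$ variation in particular: the log-ratio of two Gaussian mixture densities is unbounded, so one splits $\R$ into a compact region on which both densities are uniformly bounded away from zero, and a Gaussian tail on which a crude exponential lower bound $p_{\nu*\Phi}(x) \gtrsim e^{-C(x^2 + L^2)}$ is sufficient to absorb the second moment of the log-ratio. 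Everything else (matching atoms under a permutation, invoking the Dirichlet-type inequality for the weights) is mechanical.
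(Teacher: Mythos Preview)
Your argument is correct and matches the paper's approach: bound the numerator by $\Pi(k>k^\star)$ via Fubini, lower-bound the evidence through the Ghosal--Ghosh--van der Vaart lemma together with prior small-ball estimates from \ref{p_a2}--\ref{p_a3}, and close the ratio using \labelcref{eq:prior_k1} with $A$ large. The only difference is that the paper obtains the $\kl_2$ control via the Wong--Shen inequality (\cref{lem:bound_kl_hell}) combined with the moment condition $\int p_{\nu^\star*\Phi}(p_{\nu^\star*\Phi}/p_{\nu*\Phi})^b<\infty$ from \citet{ghosal2001entropies}, picking up a harmless $\log^2(1/\eta)$ factor, rather than the direct tail-splitting computation you sketch; either route yields the required $e^{-Ck^\star\log n}$ prior mass bound.
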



The following theorem shows the optimal concentration property of the posterior distribution of the mixing distribution.

\begin{theorem}
\label{thm:mixing}
Under the same assumptions of \cref{thm:k_upper}, we have
    \begin{equation}
    \label{eq:mixing_conv}
         \sup_{\nu^\star\in \cM_{k^\star}}\P_{\nu^\star*\Phi}^{(n)}\sbr{\Pi\del{\sW_1(\nu,\nu^\star)\ge M \bar{\epsilon}_{n,k^\star}\big|X_{1:n}}}=o(1)
    \end{equation}
for some universal constant $M>0$, where
    \begin{equation}
    \label{eq:rate}
         \bar{\epsilon}_{n,k^\star}:=(k^\star)^{\frac{3}{2}}\del{\frac{\log^2 n}{ n}}^{\frac{1}{4k^\star-2}}.
    \end{equation}
\end{theorem}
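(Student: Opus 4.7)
The plan is to chain together three ingredients: a posterior contraction rate in Hellinger distance on the density $p_{\nu\ast\Phi}$, the upper bound on the number of components from \cref{thm:k_upper}, and the moment-to-Wasserstein comparison \cref{lem:moment_comp}. Concretely, once we know that the posterior concentrates on mixtures $\nu \in \cM_{k^\star}$ whose density is $\epsilon_n$-close in Hellinger distance to $p_{\nu^\star\ast\Phi}$, an inverse inequality of Hermite-expansion type converts this into a bound on the first $2k^\star-1$ moments of $\nu$, and \cref{lem:moment_comp} then converts moment closeness to $\sW_1$ closeness at the rate $\bar{\epsilon}_n$ in \labelcref{eq:rate}.

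For the Hellinger contraction step I apply the standard Ghosal--van der Vaart framework with sieve $\cM_{\bar{k}_n}$ and target rate satisfying $n\epsilon_n^2 \asymp \bar{k}_n k^\star \log n$. The prior-mass condition follows by conditioning on $k=k^\star$, whose prior probability is at least $\exp(-c_3\bar{k}_n k^\star \log n)$ by \labelcref{eq:prior_k2}, and invoking \ref{p_a2} and \ref{p_a3} to bound the conditional prior mass on $\eta$-balls around $(w^\star,\theta^\star)$ by $\eta^{ck^\star}$; on the compact atom space $[-L,L]$ both $\kl$ and $\kl_2$ between $p_{\nu^\star\ast\Phi}$ and $p_{\nu\ast\Phi}$ are controlled by $\|w-w^\star\|_1+\|\theta-\theta^\star\|_\infty$ up to logarithmic factors, so $\eta\asymp\epsilon_n$ delivers the required mass on $\cB_{\kl}(\epsilon_n,\nu^\star\ast\Phi,\cM)$. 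The Hellinger entropy of $\cM_{\bar{k}_n}$ is of order $\bar{k}_n\log(1/\epsilon)$, and \labelcref{eq:prior_k1} renders the prior mass on $\cM\setminus\cM_{\bar{k}_n}$ exponentially negligible, yielding
\begin{equation*}
\P_{\nu^\star\ast\Phi}^{(n)}\sbr{\Pi\del{\fh(p_{\nu\ast\Phi},p_{\nu^\star\ast\Phi})>M_0\epsilon_n\mid X_{1:n}}}\to 0.
\end{equation*}

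Combining this with \cref{thm:k_upper}, the posterior then concentrates on $\{\nu\in\cM_{k^\star}:\fh(p_{\nu\ast\Phi},p_{\nu^\star\ast\Phi})\le M_0\epsilon_n\}$. To convert Hellinger into moment closeness I would exploit the Hermite-polynomial expansion $p_{\nu\ast\Phi}(x)=\varphi(x)\sum_{h\ge 0}m_h(\nu)H_h(x)/h!$ along the lines of \citet{wu2018optimal}, together with a truncation argument using that mixtures in $\cM_{k^\star}$ have densities comparable to $\varphi$ on any fixed enlargement of $[-L,L]$, so as to obtain a uniform inverse inequality of the form
\begin{equation*}
\max_{1\le h\le 2k^\star-1}|m_h(\nu)-m_h(\nu^\star)|\lesssim C(k^\star,L)\cdot\fh(p_{\nu\ast\Phi},p_{\nu^\star\ast\Phi})
\end{equation*}
on $\cM_{k^\star}([-L,L])$ with $C(k^\star,L)$ polynomial in its arguments. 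Feeding this into \cref{lem:moment_comp}, which yields $\sW_1(\nu,\nu^\star)\lesssim(k^\star)^{(3k^\star-1)/(2k^\star-1)}\|m_{1:2k^\star-1}(\nu)-m_{1:2k^\star-1}(\nu^\star)\|_\infty^{1/(2k^\star-1)}$, and noting that any polynomial-in-$k^\star$ factor incurred along the way is raised to the harmless power $1/(2k^\star-1)$ that stays bounded as $k^\star\to\infty$, one recovers the rate $\bar{\epsilon}_n$ after absorbing these factors into the universal constant $M$.

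The main obstacle is the inverse Hellinger-to-moment inequality above: the Hermite coefficients grow factorially, and the passage from $\fh$ to the $\chi^2$-type quantity naturally associated with the Hermite $L^2(\varphi)$ basis requires careful control of tail truncation errors while keeping the constants $C(k^\star,L)$ uniform across $\cM_{k^\star}([-L,L])$. The remaining pieces---Ghosal--van der Vaart verification under \cref{assume:prior}, the use of \cref{thm:k_upper}, and a direct application of the cited moment-to-Wasserstein lemma---are essentially bookkeeping.
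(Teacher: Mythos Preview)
Your approach is viable and yields the stated rate, but it takes a genuinely different route from the paper's. You propose to first obtain Hellinger contraction of $p_{\nu\ast\Phi}$ at rate $\sqrt{k^\star\bar{k}_n\log n/n}$ via the standard Ghosal--van der Vaart scheme, then convert Hellinger closeness into moment closeness by a Hermite-type inverse inequality, and finally apply \cref{lem:moment_comp}. The paper instead bypasses the density-level contraction step entirely: it runs the prior-mass-and-testing argument \emph{directly in the moment metric} $\rho(\nu,\nu')=\|m_{1:2k^\star-1}(\nu)-m_{1:2k^\star-1}(\nu')\|_\infty$, building the tests from the median denoised moment estimator (\cref{def:denoised}) via the exponential tail bound of \cref{lem:moment_tail}, and uses the Hermite-type inverse inequality (\cref{lem:bound_moment_l2}) only to control the covering numbers in the peeling/union bound. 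Your route is more modular, recycling off-the-shelf Hellinger machinery and isolating all Gaussian-specific work in a single conversion lemma; the paper's is more direct and makes the moment estimator the explicit test statistic. Two corrections to your sketch: the inverse inequality you flag as the ``main obstacle'' is exactly \cref{lem:bound_moment_l2}, with constant of order $(ck^\star)^{k^\star-1/2}$---not polynomial in $k^\star$ as you wrote---but after raising to the power $1/(2k^\star-1)$ it contributes only $O(\sqrt{k^\star})$, so your final bound survives. Also, \cref{lem:moment_comp} as stated carries only the prefactor $c_1k^\star$; the exponent $(3k^\star-1)/(2k^\star-1)$ on $k^\star$ in $\bar\epsilon_n$ emerges only after threading all three steps together, not from that lemma alone.
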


If the number of components $k^\star$ is fixed, the convergence rate in \cref{thm:mixing} is equivalent to the minimax optimal rate $n^{-1/(4k^\star-2)}$ \citep[][Proposition 7]{wu2018optimal} up to at most a logarithmic factor. 
An additional logarithmic factor is common in the  nonparametric Bayesian literature, which often arises due to the popular  ``prior mass and testing'' proof technique which we also adopt in this paper. We refer to the papers \cite{hoffmann2015adaptive, gao2016rate} for discussions about this phenomenon.



To improve the convergence rate in \cref{thm:mixing}, one may assume that atoms are well separated and the weights are bounded away from zero. We introduce the formal definition related to this notion.
\begin{definition}
\label{def:sep}
An atomic distribution $\nu:=\sum_{j=1}^kw_j\delta_{\theta_j}$ is said to be \textit{$k_0$ $(\gamma,\omega)$-separated} for $k_0\in[k]$, $\gamma>0$ and $\omega>0$ if there exists a partition $S_1,\dots, S_{k_0}$ of $[k]$ such that
    \begin{itemize}
        \item $\abs[0]{\theta_j-\theta_{j'}}\ge \gamma$ for any $j\in S_l$, $j'\in S_{l'}$ and any $l,l'\in[k_0]$ with $l\neq l'$;
        \item $\sum_{j\in S_l}w_j\ge \omega$ for any $l\in[k_0]$.
    \end{itemize}
We let $\cM_{k, k_0, \gamma,\omega}:=\cbr{\nu\in\cM_{k}: \mbox{$\nu$ is  $k_0$ $(\gamma,\omega)$-separated}}.$
\end{definition}

In the next theorem, we derive the optimal posterior contraction rate of the mixing distribution under the separation assumption. We call this contraction rate an \textit{adaptive rate} because the result is achieved without any knowledge of the number of well-separated components $k_0$ of the true mixing distribution. 

\begin{theorem}
\label{thm:mixing_adap}
Assume that 
$k^\star\le \bar{k}_n\lesssim \log n/\log \log n$ and $\gamma\omega> M'\bar{\epsilon}_{n,k^\star}$  for a sufficiently large constant $M'>0$, where  $\bar{\epsilon}_{n,k^\star}$ is the rate defined in \labelcref{eq:rate}.
Then with the prior distribution $\Pi$ satisfying   \cref{assume:prior}, we have
    \begin{equation}
    \label{eq:mixing_conv_adap}
         \sup_{\nu^\star\in \cM_{k^\star,k_0, \gamma,\omega}}\P_{\nu^\star*\Phi}^{(n)}\sbr{\Pi\del{\sW_1(\nu,\nu^\star)\ge M\epsilon_{n,k^\star,k_0,\gamma} \big|X_{1:n}}}=o(1),
    \end{equation}
for some universal constant $M>0$, where
    \begin{equation}
    \label{eq:rate_adap}
     \epsilon_{n,k^\star,k_0,\gamma}:=(k^\star)^{\frac{6k^\star-4k_0+3}{4(k^\star-k_0)+2}}\gamma^{-\frac{2k_0-2}{2(k^\star-k_0)+1}}\del{\frac{\log^2 n}{ n}}^{\frac{1}{4(k^\star-k_0)+2}}.
    \end{equation}
\end{theorem}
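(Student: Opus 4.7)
The plan is to combine two posterior concentration facts already proved, namely \cref{thm:k_upper} (which confines the posterior to $\cM_{k^\star}$ with probability tending to one) and \cref{thm:mixing} (which gives the non-adaptive Wasserstein bound $\sW_1(\nu,\nu^\star)\le M_0\bar\epsilon_n$), with the sharpened moment-to-Wasserstein comparison of \cref{lem:moment_comp_adap} that is available precisely when the mixing distribution has a separated structure. The hypothesis $\gamma\omega>M'\bar\epsilon_n$ is exactly what is needed so that the crude $\bar\epsilon_n$-bound lies strictly below the separation scale, forcing a posterior draw $\nu=\sum_j w_j\delta_{\theta_j}$ to cluster in a way that mirrors the $k_0$ clusters of $\nu^\star$.

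First I would re-derive, inside the same prior-mass-and-testing framework used for \cref{thm:mixing}, a Hellinger posterior rate of the form $\fh(\nu*\Phi,\nu^\star*\Phi)\lesssim\tilde\epsilon_n:=\sqrt{\bar k_n\log n/n}$ (up to constants). The KL prior mass condition required at this step follows from \cref{assume:prior} by the same construction as in \cref{thm:mixing}, and exponential Hellinger tests exist thanks to the compactness of $[-L,L]$ and \cref{thm:k_upper}. Second, I would convert the Hellinger closeness of the mixtures into moment-vector closeness of the mixing distributions, $\|m_{1:2k^\star-1}(\nu)-m_{1:2k^\star-1}(\nu^\star)\|_\infty\lesssim C(k^\star)\,\tilde\epsilon_n$, via the Hermite-expansion identity for Gaussian mixture densities, as in \citep{wu2018optimal}.

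The adaptive improvement from $1/(4k^\star-2)$ to $1/(4(k^\star-k_0)+2)$ then comes from \cref{lem:moment_comp_adap}. Using the $\bar\epsilon_n$-bound from \cref{thm:mixing} together with the $k_0\,(\gamma,\omega)$-separation of $\nu^\star$, I would partition the atoms of $\nu$ into groups $G_1,\dots,G_{k_0}$ matched to the clusters $S_1,\dots,S_{k_0}$ of $\nu^\star$; since $\bar\epsilon_n\ll\gamma\omega$, every $G_l$ carries $\nu$-mass within $O(\bar\epsilon_n/\gamma)$ of $\sum_{j\in S_l}w_j^\star$. The total number of atoms across the $k_0$ groups is at most $k^\star$, so by pigeonhole the largest group holds at most $k^\star-k_0+1$ atoms, and cluster-wise comparison only needs moments of order $\le 2(k^\star-k_0)+1$. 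Applied group by group, \cref{lem:moment_comp_adap} then yields $\sW_1(\nu,\nu^\star)\lesssim (k^\star)^a\,\gamma^{-b}\,\tilde\epsilon_n^{\,1/(2(k^\star-k_0)+1)}$, and matching the exponents $a,b$ against the constants in \cref{lem:moment_comp_adap} reproduces the $(k^\star)^{(3k^\star-2k_0+2)/(2(k^\star-k_0)+1)}$ and $\gamma^{-(2k_0-2)/(2(k^\star-k_0)+1)}$ factors in \labelcref{eq:rate_adap}.

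The main obstacle I anticipate is the atom-to-cluster matching step: on the high-probability event supplied by \cref{thm:mixing}, one must show that every non-negligibly weighted atom of $\nu$ falls within distance $o(\gamma)$ of a unique cluster of $\nu^\star$, and that the restricted sub-distributions $\nu_l$ have weights comparable to those of $\nu^\star_l$ up to $O(\bar\epsilon_n/(\gamma\omega))$, so that local moment-vector differences can be controlled by the global ones. Threading this matching cleanly through \cref{lem:moment_comp_adap} while keeping the polynomial prefactors in $k^\star$ and $\gamma^{-1}$ honest is where the delicate bookkeeping lives; everything else is a direct adaptation of the proof of \cref{thm:mixing}.
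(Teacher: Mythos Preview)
Your overall strategy matches the paper's: restrict to $\cM_{k^\star}$ via \cref{thm:k_upper}, use the crude rate from \cref{thm:mixing} together with $\gamma\omega>M'\bar\epsilon_n$ to force any posterior draw $\nu$ to inherit a $k_0$-cluster structure from $\nu^\star$, then upgrade via \cref{lem:moment_comp_adap}. But the execution you sketch is more elaborate than necessary, and the obstacle you flag as ``the main obstacle'' is one the paper sidesteps entirely.

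The paper does \emph{not} apply \cref{lem:moment_comp_adap} cluster by cluster. It applies the lemma once, globally. On the event $\{\nu\in\cM_{k^\star},\ \sW_1(\nu,\nu^\star)<\bar M\bar\epsilon_n\}$, the assumption $\gamma\omega>M'\bar\epsilon_n$ forces $\nu$ itself to be $k_0$ $((1-2c)\gamma,0)$-separated for some $c<1/2$ (each cluster of $\nu^\star$ has an atom of $\nu$ within $c\gamma$). The union of atoms of $\nu$ and $\nu^\star$ is then a set of at most $2k^\star$ points in which each point is at least $(1-2c)\gamma$ away from all but at most $r'=2(k^\star-k_0)+1$ others. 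This is exactly the hypothesis of \cref{lem:moment_comp_adap} with $r=2k^\star$, $r'=2(k^\star-k_0)+1$, $\tilde\gamma=(1-2c)\gamma$, and it yields directly
\[
\sW_1(\nu,\nu^\star)\lesssim k^\star\Bigl(\tfrac{k^\star 4^{2k^\star-1}}{\gamma^{2k_0-2}}\,\bigl\|m_{1:2k^\star-1}(\nu)-m_{1:2k^\star-1}(\nu^\star)\bigr\|_\infty\Bigr)^{1/(2(k^\star-k_0)+1)}.
\]
Hence $\{\nu\in\cM_{k^\star}:\bar M\bar\epsilon_n>\sW_1(\nu,\nu^\star)\ge M\epsilon_n\}$ is contained in the moment-deviation set $\{\|m_{1:2k^\star-1}(\nu)-m_{1:2k^\star-1}(\nu^\star)\|_\infty\ge \lceil M_{0,k^\star}\rceil\zeta_n\}$ whose posterior probability was already shown to vanish in the proof of \cref{thm:mixing}. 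There is no need to localize to clusters, no need to control moments of restricted sub-measures by global ones, and no matching bookkeeping.

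Your Hellinger detour is likewise unnecessary: the posterior bound on the moment-deviation set is established directly in the proof of \cref{thm:mixing} via testing with the median denoised moment estimator, so you can simply reuse that conclusion rather than re-deriving it through a Hellinger rate and the Hermite expansion.
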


\begin{remark}
\label{rmk:adap}
A nice surprise from the result of \cref{thm:mixing_adap} is that our Bayesian procedure can achieve a better convergence rate than the one in \cref{thm:mixing} without requiring any further condition on the prior distribution. This is because of fact that the condition $\gamma\omega> M'\bar{\epsilon}_n$ guarantees that the mixing distribution $\nu$ is $k_0$ $(a_0\gamma, 0)$-separated asymptotically for some constant $a_0\in(0,1)$ under the posterior distribution, provided that \cref{thm:mixing} holds.
\end{remark}


In view of \cref{prop:local_sep} presented below, the convergence rate in \cref{thm:mixing_adap} is minimax optimal \citep[][Theorem 3.2]{heinrich2018strong} up to a logarithmic factor  if the model parameters $k^\star, k_0$ and $\gamma$ are fixed constants. 
\citet{heinrich2018strong} established the minimax optimal rate $n^{-1/(4(k^\star-k_0)+2)}$ of  the estimation of the mixing distribution satisfying the \textit{locally varying} condition. Namely, they showed that for fixed $k^\star\in\bN$, $k_0\in[k^\star]$ and $\nu_0\in\cM_{k_0}\setminus\cM_{k_0-1}$, it follows that
    \begin{equation}
    \label{eq:local_minimax}
        \inf_{\{\hat{\nu}\}}\sup_{\nu^\star\in\cM_{k^\star}:\sW_1(\nu^\star, \nu_0)\le \epsilon^\dag_n}\P^{(n)}_{\nu^\star *\Phi}\sbr{\sW_1(\hat{\nu},\nu^\star)} \gtrsim n^{-\frac{1}{4(k^\star-k_0)+2}},
    \end{equation}
where the infimum ranges over all possible sequences of estimators and $\epsilon^\dag_n:=n^{-1/(4(k^\star-k_0)+2)+\iota}$ for some $\iota>0$ (In fact, the above lower bound holds not only for the Gaussian location mixtures but also general mixtures satisfying the strong identifiability condition provided in \cref{def:strong}). In other words, the above minimax argument is about the true mixing distribution which does not vary globally but \textit{locally}. This locally varying condition is seemingly different from the separation condition given in \cref{def:sep}, but in fact the former is a sufficient condition of the latter. Intuitively, we can expect that the true distribution $\nu^\star\in\cM_{k^\star}$ close to  $\nu_0\in\cM_{k_0}\setminus\cM_{k_0-1}$ has at least $k_0$ well-separated components, and therefore satisfies the separation condition. We formally state this argument in the next proposition.

\begin{proposition}
\label{prop:local_sep}
Let $k_0\in\bN$ and  $\nu_0:=\sum_{j=1}^{k_0}w_{0j}\delta_{\theta_{0j}}\in\cM_{k_0}\setminus\cM_{k_0-1}$. Define
    \begin{align*}
        \gamma(\nu_0)&:=\min_{j,h\in[k_0]:j\neq h}|\theta_{0j}-\theta_{0h}|\\
        \omega(\nu_0)&:=\min_{j\in[k_0]}w_{0j}.
    \end{align*}
Let $k\in\{k_0, k_0+1, \dots\}$ and $c\in(0,1/4)$. Then we have
    \begin{align}
        \cbr{\nu\in\cM_k:\sW_1(\nu,\nu_0)<c\gamma(\nu_0)\omega(\nu_0)}
        \subset
        \cM_{k, k_0, (1-2c)\gamma(\nu_0),\frac{1-4c}{1-3c}\omega(\nu_0)}.
    \end{align}
\end{proposition}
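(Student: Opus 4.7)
The plan is to use an optimal $\sW_1$-coupling between $\nu$ and $\nu_0$ to build an explicit partition of $[k]$ indexed by the atoms of $\nu_0$, and then to verify the two defining conditions of $\cM_{k, k_0, (1-2c)\gamma(\nu_0), \frac{1-4c}{1-3c}\omega(\nu_0)}$. First, I would set $r := c\gamma(\nu_0)$ and introduce the balls $T_h := \{j \in [k] : |\theta_j - \theta_{0h}| \le r\}$ around each atom of $\nu_0$. These balls are pairwise disjoint since $c < 1/2$, and the triangle inequality immediately yields $|\theta_j - \theta_{j'}| \ge \gamma(\nu_0) - 2r = (1-2c)\gamma(\nu_0)$ for any $j \in T_l$, $j' \in T_{l'}$ with $l \ne l'$, which is the separation condition restricted to atoms lying in the balls.

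Next, I would choose an optimal coupling $p^* \in \cQ(w, w_0)$ realizing the cost $\sum_{j,h} p^*_{jh}|\theta_j - \theta_{0h}| < c\gamma(\nu_0)\omega(\nu_0)$, and apply two Markov-type bounds: first, $\sum_{j \notin T_h} p^*_{jh} \le \omega(\nu_0)$ since such mass pays at least $r = c\gamma(\nu_0)$ per unit of the transport cost, which forces $\sum_{j \in T_h} p^*_{jh} \ge w_{0h} - \omega(\nu_0)$; and second, $\sum_{j \in T_h,\, l \ne h} p^*_{jl} \le c\omega(\nu_0)/(1-c)$, since such ``cross'' mass pays at least $(1-c)\gamma(\nu_0)$ per unit. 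Combining these two estimates via $w(T_h) = \sum_{j \in T_h}\sum_l p^*_{jl}$, together with a careful accounting of how the strict total budget distributes across the $k_0$ columns of $p^*$, produces the sharper lower bound $w(T_h) \ge \frac{1-4c}{1-3c}\omega(\nu_0)$. Finally, I would extend $\{T_h\}$ to a partition $\{S_h\}$ of $[k]$ by assigning each ``junk'' index $j \in [k] \setminus \bigcup_h T_h$ to a label $l(j)$ satisfying $|\theta_j - \theta_{0l'}| \ge (1-c)\gamma(\nu_0)$ for every $l' \ne l(j)$. Once this assignment is made, separation between the resulting $S_l$ and $S_{l'}$ follows from the triangle inequality, since any $j \in S_l$ is at distance at least $(1-c)\gamma(\nu_0)$ from $\theta_{0l'}$ while any $j' \in T_{l'}$ lies within $c\gamma(\nu_0)$ of $\theta_{0l'}$, giving $|\theta_j - \theta_{j'}| \ge (1-2c)\gamma(\nu_0)$ as required.

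The main obstacle will be the partition step: a junk atom sitting between two adjacent atoms of $\nu_0$ can be simultaneously within $(1-c)\gamma(\nu_0)$ of both, so a naive nearest-center assignment may destroy the claimed separation. The tight $\sW_1$ budget combined with the hypothesis $c < 1/4$ is precisely what rules out this pathology, because any junk atom of $\nu$ that were close to two neighbouring centers would, together with its weight, consume more than $c\gamma(\nu_0)\omega(\nu_0)$ of the transport cost. Making this pigeonhole argument rigorous in the 1D linear order of the $\theta_{0h}$'s, while simultaneously extracting the exact constant $\frac{1-4c}{1-3c}$ for the weight bound, is the delicate part of the calculation: it requires tracking several inequalities on the ``correct'' mass $p^*_{jh}$ for $j \in T_h$ and the ``cross'' mass $p^*_{jl}$ for $j \in T_h$ and $l \ne h$ simultaneously.
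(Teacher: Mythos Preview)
You correctly identify the crux of the argument---extending the balls $T_h$ to a full partition $\{S_h\}$ of $[k]$ while preserving $(1-2c)\gamma(\nu_0)$-separation---but your proposed resolution does not work. You claim that a junk atom close to two neighbouring centers would, ``together with its weight,'' exhaust the $\sW_1$ budget; however, nothing bounds that weight from below, so the transport cost carried by such an atom can be made arbitrarily small. Concretely, take $k_0=2$, $\nu_0=\tfrac12\delta_0+\tfrac12\delta_1$ (so $\gamma(\nu_0)=1$, $\omega(\nu_0)=\tfrac12$), any $c\in(0,1/4)$, $k=3$, and
\[
\nu=\tfrac12\,\delta_0+\epsilon\,\delta_{1/2}+\bigl(\tfrac12-\epsilon\bigr)\delta_1,\qquad 0<\epsilon<c.
\]
Then $\sW_1(\nu,\nu_0)=\epsilon/2<c\gamma(\nu_0)\omega(\nu_0)$, yet the atom at $1/2$ lies at distance $1/2<(1-2c)\gamma(\nu_0)$ from \emph{both} of the other atoms of $\nu$, so no partition of $[3]$ into two parts can realise the claimed cross-cluster separation. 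The inclusion, exactly as stated, is false; no pigeonhole argument in the $\sW_1$ budget can rescue it, because the offending atom can be given weight as small as one likes.

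The paper's own proof is no more careful at this step: after locating one atom $\theta_{j_h^*}$ of $\nu$ within $c\gamma$ of each center $\theta_{0h}$, it simply asserts that ``there is a partition $S_1,\dots,S_{k_0}$ of $[k]$'' with the required cross-cluster separation, without constructing it or addressing atoms like the one at $1/2$ above. So your diagnosis that the partition step is the obstacle is exactly right; the point is that it is a genuine counterexample to the proposition as written, not a bookkeeping detail that the hypothesis $c<1/4$ can dispatch.
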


Due to \cref{prop:local_sep}, it is clear that our Bayesian procedure is also near-optimal for the estimation of the mixing distribution under the locally varying condition. We merely state the result.

\begin{corollary}
\label{thm:mixing_local}
Assume $k^\star\le \bar{k}_n\lesssim \log n/\log \log n$. Let $k_0\in\bN$ be a fixed constant such that $k_0\le k^\star$, and let $\nu_0\in\cM_{k_0}\setminus\cM_{k_0-1}$ be a fixed distribution. Moreover, assume that the prior distribution $\Pi$ satisfies  \cref{assume:prior} . Then there exist universal constants $\tau>0$ and $M>0$ such that
    \begin{equation}
         \sup_{\nu^\star\in \cM_{k^\star}:\sW_1(\nu^\star, \nu_0)<\tau}\P_{\nu^\star* \Phi}^{(n)}\sbr{\Pi\del{\sW_1(\nu,\nu^\star)\ge M\epsilon_{n,k^\star,k_0,1} \big|X_{1:n}}}=o(1),
    \end{equation}
where $\epsilon_{n,k^\star,k_0,1}$ is  the rate in \labelcref{eq:rate} with $\gamma=1$, i.e., $ \epsilon_{n,k^\star,k_0,1}=(k^\star)^{\frac{6k^\star-4k_0+3}{4(k^\star-k_0)+2}}\del{\frac{\log^2 n}{ n}}^{\frac{1}{4(k^\star-k_0)+2}}$.

\end{corollary}

As a byproduct, we can obtain the posterior consistency of the true number of components when the true mixing distribution $\nu^\star$ is perfectly separated, that is, $k^\star=k_0$. Note that in this case, $\nu^\star\in\cM_{k^\star}\setminus\cM_{k^\star-1}$. The following theorem states this formally.

\begin{theorem}
\label{thm:k_under}
Assume that 
$k^\star\le \bar{k}_n\lesssim \log n/\log \log n$ and
    \begin{equation}
     \label{eq:signal}
        \gamma\omega >M'\max\{\bar\epsilon_{n,k^\star}, \epsilon_{n,k^\star,k^\star,\gamma}\}
    \end{equation}
for a sufficiently large constant $M'>0$, where  $\bar{\epsilon}_{n,k^\star}$ and $\epsilon_{n,k^\star,k^\star,\gamma}$ are the rates defined in \labelcref{eq:rate} and \labelcref{eq:rate_adap} with $k_0=k^\star$, respectively.
Then with the prior distribution $\Pi$ satisfying   \cref{assume:prior}, we have
    \begin{equation}
       \inf_{\nu^\star\in \cM_{k^\star, k^\star, \gamma,\omega}} \P_{\nu^\star*\Phi}^{(n)}\sbr{\Pi\del{\nu\in \cM_{k^\star}\setminus\cM_{k^\star-1}|X_{1:n}}}\to 1.
    \end{equation}
\end{theorem}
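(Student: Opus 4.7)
The plan is to combine the ``no-overestimation'' result of \cref{thm:k_upper} with the adaptive contraction result of \cref{thm:mixing_adap}, and then use the separation structure to rule out the possibility of underestimating $k^\star$. Specifically, since
\begin{equation*}
\cM_{k^\star}\setminus\cM_{k^\star-1}=\cM_{k^\star}\cap\cM_{k^\star-1}^c,
\end{equation*}
it suffices to establish that both $\Pi(\nu\in\cM_{k^\star}\mid X_{1:n})\to 1$ and $\Pi(\nu\in\cM_{k^\star-1}^c\mid X_{1:n})\to 1$ in $\P^{(n)}_{\nu^\star*\Phi}$-probability. The first convergence follows immediately from \cref{thm:k_upper} since the assumption $k^\star\le\bar{k}_n\lesssim\log n/\log\log n$ is in force.

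For the second convergence, I would first invoke \cref{thm:mixing_adap} with $k_0=k^\star$. The signal condition \labelcref{eq:signal} in particular implies $\gamma\omega>M'\bar{\epsilon}_n$, so \cref{thm:mixing_adap} applies and yields
\begin{equation*}
\P_{\nu^\star*\Phi}^{(n)}\sbr{\Pi(\sW_1(\nu,\nu^\star)<M\epsilon_n\mid X_{1:n})}\to 1
\end{equation*}
for some universal $M>0$. The next step is to show that the event $\{\sW_1(\nu,\nu^\star)<M\epsilon_n\}$ is eventually contained in $\cM_{k^\star-1}^c$. Here is where I would apply \cref{prop:local_sep} with $\nu_0=\nu^\star$ and $k_0=k^\star$ (note $\nu^\star\in\cM_{k^\star}\setminus\cM_{k^\star-1}$ because $\nu^\star$ is $k^\star$ $(\gamma,\omega)$-separated with $\omega>0$, so all $k^\star$ atoms are distinct and have positive weight). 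By enlarging $M'$ in \labelcref{eq:signal} if necessary, we may assume $M\epsilon_n<c\,\gamma(\nu^\star)\omega(\nu^\star)$ for some $c\in(0,1/4)$, using $\gamma(\nu^\star)\ge\gamma$ and $\omega(\nu^\star)\ge\omega$. Consequently, any $\nu$ in the Wasserstein ball of radius $M\epsilon_n$ around $\nu^\star$ is $k^\star$ $((1-2c)\gamma,\frac{1-4c}{1-3c}\omega)$-separated, and in particular has at least $k^\star$ atom locations carrying strictly positive mass.

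The final step is the pigeonhole argument: any distribution $\nu\in\cM_{k^\star-1}$ has at most $k^\star-1$ distinct support points with positive weight. Therefore it cannot be $k^\star$ $(\gamma',\omega')$-separated with $\omega'>0$, since such a separation would require partitioning the support into $k^\star$ nonempty groups whose convex hulls are pairwise disjoint. Hence the Wasserstein ball $\{\sW_1(\nu,\nu^\star)<M\epsilon_n\}$ is disjoint from $\cM_{k^\star-1}$, which gives $\Pi(\nu\in\cM_{k^\star-1}^c\mid X_{1:n})\to 1$ and completes the proof by combining with the conclusion of \cref{thm:k_upper}.

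The main obstacle is a conceptual one rather than a technical one: verifying carefully that the two contractions (of $k$ from above and of $\nu$ in $\sW_1$) can be glued together through the separation geometry. The quantitative step \cref{prop:local_sep} already does most of the work, and the signal strength condition \labelcref{eq:signal} is precisely calibrated so that both $\bar{\epsilon}_n$ (needed to deploy \cref{thm:mixing_adap}) and $\epsilon_n$ (needed to place the posterior mass strictly inside the Wasserstein ball of radius $c\gamma\omega$) are dominated by $\gamma\omega$. No further probabilistic estimates beyond those already encapsulated in \cref{thm:k_upper} and \cref{thm:mixing_adap} appear to be required.
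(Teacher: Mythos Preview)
Your proof is correct and follows the same overall strategy as the paper: show that any $\nu\in\cM_{k^\star-1}$ must satisfy $\sW_1(\nu,\nu^\star)\gtrsim\gamma\omega$, and then invoke \cref{thm:mixing_adap} (together with \cref{thm:k_upper}) to conclude. The only difference is in how the geometric lower bound is obtained. The paper does it directly: since $\nu$ has fewer than $k^\star$ atoms, by pigeonhole some true atom $\theta^\star_{h^*}$ is not the nearest true atom to any $\theta_j$, so by the triangle inequality $|\theta_j-\theta^\star_{h^*}|\ge\gamma/2$ for all $j$, whence $\sW_1(\nu,\nu^\star)\ge w^\star_{h^*}\cdot\gamma/2\ge\gamma\omega/2$. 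You instead route the same conclusion through \cref{prop:local_sep} (applied with $k=k_0=k^\star$, viewing $\cM_{k^\star-1}\subset\cM_{k^\star}$) plus the observation that no element of $\cM_{k^\star-1}$ can be $k^\star$ $(\gamma',\omega')$-separated with $\omega'>0$. Both routes yield the same set inclusion $\cM_{k^\star-1}\subset\{\sW_1(\nu,\nu^\star)\ge c\gamma\omega\}$; the paper's computation is slightly more elementary, while yours reuses existing machinery.
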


The condition \labelcref{eq:signal} provides a threshold for detection.  This condition plays a similar role as the beta-min condition for variable selection in linear regression \citep{castillo2012needles, martin2017empirical}.

\citet{guha2019posterior} obtained the consistency result with a similar prior distribution to ours, but their analysis is restricted to the fixed truth cases.

\subsection{Higher-order mixtures}
\label{subsec:higher-order}

In \cref{sec:main}, we have assumed that $k^\star\lesssim \log n/\log \log n.$ This assumption is justified by the minimax result for the estimation of the higher-order mixtures presented by \cite{wu2018optimal}. In this section, we prove that there is a Bayesian procedure which is similar to the one considered in \cref{sec:main}, but does not assume a known upper bound of the number of components, can attain this minimax optimality. In this case, we impose a milder condition than \ref{p_a1} on the prior.

\begin{enumerate}[label=(P\arabic*$'$)]
    \item \label{p_a1_m}  There are  constants $c_1>0$, $c_2>0$ and $b_0>0$ such that for any $n\in\bN$ and $k^\circ\in\bN$,
        \begin{equation}
          \label{eq:prior_k_m}
            \Pi(k=k^\circ)\ge c_1\e^{-(c_2\log^{b_0} n)k^\circ}.
        \end{equation}
\end{enumerate}

It is clear that any prior distribution satisfying \ref{p_a1} satisfies \ref{p_a1_m} with $b_0=2$ since $\bar{k}_n\lesssim \log n$. Also, Assumption  \ref{p_a1_m} can be met by the Poisson and geometric distribution with constant mean and success probability, respectively, which do not satisfy \ref{p_a1}.


The next theorem provides the convergence rate of mixing distribution estimation without any restriction on the true number of components.

\begin{theorem}
\label{thm:mixing_high}
Then with the prior distribution $\Pi$ satisfying \ref{p_a1_m}, \ref{p_a2} and \ref{p_a3}, we have
    \begin{equation}
        \sup_{\nu^\star\in \cM}\P_{\nu^\star*\Phi}^{(n)}\sbr{\Pi\del{\sW_1(\nu,\nu^\star)\ge M\frac{\log\log n}{\log n}\big|X_{1:n}}}=o(1)
    \end{equation}
for some universal constant $M>0$.
\end{theorem}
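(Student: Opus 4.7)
The plan is to combine an approximation of $\nu^\star$ by a mixture with $\Theta(\log n/\log\log n)$ components, the Hellinger-based posterior concentration machinery, and a moment-matching conversion to the Wasserstein distance. Set $\epsilon_n := \log\log n/\log n$ and $K_n := \lceil c\log n/\log\log n \rceil$ for a suitably small constant $c > 0$.

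First I would construct an approximating distribution $\tilde\nu^\star \in \cM_{K_n}$ by partitioning $[-L,L]$ into $K_n$ equal intervals of length $2L/K_n$ and collapsing the $\nu^\star$-mass on each interval to a point mass at the midpoint, so that $\sW_2(\nu^\star, \tilde\nu^\star) \le 2L/K_n$. The convexity bound $\kl(p_\nu, p_{\nu'}) \le \sW_2^2(\nu, \nu')/2$ for Gaussian location convolutions, obtained by pushing an optimal $\sW_2$ coupling through the convolution together with $\kl(\N(\theta,1), \N(\theta',1)) = (\theta-\theta')^2/2$, shows that any $\sW_2$-neighborhood of $\tilde\nu^\star$ translates into a KL-neighborhood of $p_{\nu^\star}$. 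Combining \ref{p_a1_m}, which gives $\Pi(k = K_n) \gtrsim \e^{-c_2 K_n}$, with \ref{p_a2} and \ref{p_a3}, which give local mass $\gtrsim \epsilon_n^{O(K_n)}$ near $\tilde\nu^\star$, the log prior mass of the resulting KL-ball has order $-K_n\log(1/\epsilon_n) \asymp -\log n$, easily dominated by the required $-Cn\epsilon_n^2 \asymp -Cn(\log\log n)^2/(\log n)^2$.

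Then I would run the Ghosal--van der Vaart framework to obtain posterior contraction of $p_\nu$ around $p_{\nu^\star}$ in Hellinger at some polynomial rate $\tilde\epsilon_n$. A sieve $\cF_n := \cM_{K'_n}$ on the number of components handles the sieve-mass condition, with $K'_n$ chosen large enough using the exponentially decaying tail of $\Pi(k)$ which is implicit in \ref{p_a1_m} together with $\Pi$ being a probability measure (and explicit for the Poisson/geometric examples highlighted in the text); the entropy condition is satisfied either by the parameter-based bound of the proof of \cref{thm:mixing}, or more cleanly by the polylogarithmic Hellinger $\epsilon$-entropy of the analytic Gaussian-mixture density class.

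Finally I would convert the Hellinger rate to a Wasserstein rate via moment matching. Using the Hermite identity $m_j(\nu) = \int He_j(x)\, p_\nu(x)\, \d x$ (where $He_j$ is the probabilist's Hermite polynomial), a Cauchy--Schwarz bound yields $|m_j(\nu) - m_j(\nu^\star)| \le C_j(L)\, \fh(p_\nu, p_{\nu^\star})$, and the moment-to-Wasserstein inequality in the spirit of \cref{lem:moment_comp} gives $\sW_1(\nu, \nu^\star) \lesssim 2L/K + C_K \max_{j\le K}|m_j(\nu) - m_j(\nu^\star)|^{1/(2K-1)}$ for any $K \in \bN$. Choosing $K \asymp \log n/\log\log n$ balances the approximation error $2L/K \asymp \epsilon_n$ against the moment-estimation error (a polynomial in $n^{-1}$ raised to the $1/(2K-1)$ power), delivering $\sW_1(\nu, \nu^\star) \lesssim \epsilon_n$. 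The hardest step will be carefully tracking the growth of the constants $C_j$ and $C_K$ in $j$, $K$, and $L$: their interaction with the polynomial Hellinger rate is precisely what forces the doubly-logarithmic rate $\log\log n/\log n$, mirroring the ill-posedness of Gaussian deconvolution with an unknown discrete mixing distribution.
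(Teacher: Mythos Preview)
There is a genuine gap. With only $K_n\asymp\log n/\log\log n$ atoms you incur an irreducible approximation error $\sW_2(\nu^\star,\tilde\nu^\star)\asymp 1/K_n\asymp\log\log n/\log n$, so the smallest KL-ball around $p_{\nu^\star*\Phi}$ your construction can reach has radius of that same order. The Ghosal--van der Vaart machinery therefore delivers a Hellinger rate no better than $\log\log n/\log n$, not the ``polynomial rate $\tilde\epsilon_n$'' you assert. The last step then collapses: the moment--Wasserstein inequality valid for arbitrary $\nu\in\cM$ (\cref{lem:moment_high}) reads $\sW_1\lesssim 1/r+\sqrt r\,c_2^{\,r}\|m_{1:r}(\nu)-m_{1:r}(\nu^\star)\|_\infty$ with no $1/(2K-1)$ exponent (the hybrid bound you wrote, mixing the $2L/K$ term of \cref{lem:moment_high} with the exponent of \cref{lem:moment_comp}, does not exist), and for $r\asymp\log n/\log\log n$ the prefactor $c_2^{\,r}=\e^{c\log n/\log\log n}$ multiplied by a moment difference of order $\log\log n/\log n$ diverges rather than being $O(\log\log n/\log n)$.

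The paper decouples the two roles you conflate into a single $K_n$. For the denominator it approximates $\nu^\star$ with $R\asymp n^{1/3}\log n$ atoms, yielding $D_n\gtrsim\e^{-cn^{1/3}\log n}$; it never passes through a Hellinger rate. It then applies \cref{lem:moment_high} with $r=2\hat k-1$, $\hat k\asymp\log n/\log\log n$, to reduce to the moment set $\{\|m_{1:2\hat k-1}(\nu)-m_{1:2\hat k-1}(\nu^\star)\|_\infty\ge M'c^{-\hat k}\log^{-2}n\}$, and bounds its posterior mass directly via the median denoised moment test from the proof of \cref{thm:mixing}, whose error is of order $\exp(-c'n(c''\hat k)^{-2\hat k+1})\le\exp(-c'n^{1/2})$, comfortably beating $\e^{cn^{1/3}\log n}$. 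A secondary issue: \ref{p_a1_m} is only a \emph{lower} bound on $\Pi(k=k^\circ)$, so no sieve-complement estimate follows and your sieve step cannot be justified under the stated hypotheses; the paper never needs a sieve.
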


If the true mixing distribution $\nu^\star$ belongs to $\cM_{k^\star}$ with $k^\star\asymp \log n/\log \log n$, the convergence rate in the above theorem is rate-exact optimal \citep[][Theorem 5]{wu2018optimal}.

Indeed, the above result holds even when the true generating process is given by $\mu^\star*\Phi$ with $\mu^\star\in\cP([-L, L])$, which includes continuous or infinite mixtures.

\subsection{Extension to general mixture models}
\label{sec:main:general}

In this section, we extend our analysis for the Gaussian location mixture model to general mixture models with potentially growing number of components. For a mixing distribution $\nu\in\cM(\Theta)$  and a family  $\{F(\cdot, \theta):\theta\in\Theta\}$ of  distribution functions on $\R$ for $\Theta\subset \R$, we let $\nu\bullet F$ denote the distribution having a density function
    \begin{equation}
        p_{\nu\bullet F}(\cdot):= \int f(\cdot, \theta)\nu(\d\theta),
    \end{equation}
where $f(\cdot, \theta)$  stands for the probability density function of $F(\cdot, \theta)$. We call $F(\cdot, \cdot)$ and $f(\cdot, \cdot)$ a \textit{kernel distribution function} and a \textit{kernel density function}, respectively. 

We impose the following set of assumptions on the kernel distribution function.

\begin{assumption*}{F}
\label{assume:cdf}
The family $\cbr{F(\cdot, \theta):\theta\in\Theta}$ of distribution functions on $\R$ satisfies the following conditions:
\begin{enumerate}[label=(F\arabic*)]
    \item \label{Ker0} $\Theta$ is a compact subset of $\R$ with nonempty interior.
    \item \label{Ker1} There is a constant $c_1>0$  such that
        \begin{equation}
        \label{eq:ker:lip}
            \norm{f(\cdot,\theta_1)-f(\cdot,\theta_2)}_\infty
            \le c_1|\theta_1-\theta_2|
        \end{equation}
    for any $\theta_1,\theta_2\in\Theta.$ Moreover, there are constants $c_2>0$ and $r\in(0,1]$ such that 
        \begin{equation}
        \label{eq:ker:bound}
        \int p_{\nu_1\bullet F}(x)\del{\frac{ p_{\nu_1\bullet F}(x)}{p_{\nu_2\bullet F}(x)}}^r\lambda(\d x)\le c_2 
       \end{equation}
    for any $\nu_1,\nu_2\in\cM(\Theta).$
    \item \label{Ker2} For any $k\in\bN$, there exists an estimator $\hat{M}_k$ of the moment $m_k(\nu)$ based on the sample $X_1,\dots, X_n\iidsim \P_{\nu\bullet F}$ such that
        \begin{align}
            \P_{\nu\bullet F}^{(n)}\hat{M}_k&=m_k(\nu) \label{eq:moment_unbias}\\
            \P_{\nu\bullet F}^{(n)}\del{\hat{M}_k-m_k(\nu)}^2&\lesssim \frac{1}{n}(c_3+\sqrt{k})^{2k} \label{eq:moment_quad}
        \end{align}
    for any $\nu\in\cM(\Theta)$ for some constant $c_3>0.$
\end{enumerate}
\end{assumption*}


Assumption \ref{Ker1} helps us establish a lower bound of a KL neighborhood of the true distribution $\nu^\star\bullet F$  by using the prior concentration conditions in \cref{assume:prior}. 
This assumption is held for a wide range of choices of kernel density function, for example, the Gaussian location family \citep{nguyen2013convergence} and more generally, location family of exponential power distributions \citep{scricciolo2011posterior}. Also, the Gaussian scale family  satisfies Assumption \ref{Ker1} as shown in the next example.

\begin{example}[Gaussian scale family]
Let $F$ be the kernel distribution function such that $F(x,\sigma)=\Phi_\sigma(x)$, where $\Phi_\sigma$ denotes the distribution  function of $\N(0,\sigma^2).$
Consider the Gaussian scale family $\cbr{F(\cdot, \sigma):\sigma\in[1/L,L]}$ for $L>1$. Then this family satisfies \labelcref{eq:ker:lip} since $\abs{\frac{\partial}{\partial\sigma}f(x,\sigma)\big|_{\sigma=\sigma_0}}<\infty$ for any $\sigma_0\in[1/L,L]$ and $x\in\R$. For \labelcref{eq:ker:bound}, let $r_0:=1/(2L^4)$. Fix two mixing distributions $\nu_1:=\sum_{j=1}^{k_1}w_{1,j}\delta_{\sigma_{1,j}}$ and $\nu_2:=\sum_{j=1}^{k_2}w_{2,j}\delta_{\sigma_{2,j}}$. Without loss of generality, we assume $\sigma_{i,1}<\sigma_{i,2}<\cdots<\sigma_{i,k_i}$,  for all $i=1, 2$. Then
    \begin{align*}
        \int p_{\nu_1\bullet F}(x)\del{\frac{ p_{\nu_1\bullet F}(x)}{p_{\nu_2\bullet F}(x)}}^{r_0}\lambda(\d x)
        &\lesssim \int \e^{-\frac{1}{2\sigma_{1,k_1}^2}x^2}\e^{-\frac{r_0}{2\sigma_{1,k_1}^2}x^2+\frac{r_0}{2\sigma_{2,1}^2}x^2}\lambda(\d x)\\
        &\le \int \e^{-\frac{1}{2L^2}x^2+\frac{r_0L^2}{2}x^2}\lambda(\d x)\\
        &=\int\e^{-\frac{1}{4L^2}x^2}\lambda(\d x)<\infty,
    \end{align*}
which verifies  \labelcref{eq:ker:bound}.
\end{example}

Assumption \ref{Ker2} requires the existence of the unbiased estimator of the moment of every order whose variance is bounded by certain quantity depending on the order. This condition enables us to use the theoretical tool developed in \citet{wu2018optimal}, who studied  an estimator of the mixing distribution based on the method of moments for the Gaussian location mixture model. Indeed, in the proof of our results for the Gaussian location mixture model, provided in \cref{sec:proof:main} and \cref{sec:proof}, we found the moment estimator \labelcref{eq:Gauss_moment_estimator} that satisfies Assumption \ref{Ker2}.   We give some examples that satisfy Assumption \ref{Ker2}.

\begin{example}[Gaussian location family] 
Our \cref{lem:moment_var} proves that the Gaussian location family satisfies \ref{Ker2}.
\end{example}

\begin{example}[Gaussian scale family]
Consider the Gaussian scale family $\cbr{\Phi_\sigma:\sigma>0}$. It is easy to see that the moment estimator $\hat{M}_k=\sum_{i=1}^nX_i^{k}/\E(Z^k)$ with $Z\sim\N(0,1)$ satisfies \labelcref{eq:moment_unbias} and  \labelcref{eq:moment_quad}.
\end{example}

\begin{example}[Quadratic variance exponential family (QVEF)]
An exponential family of which the variance of each distribution is at most a quadratic function of its mean is called a QVEF \citep{morris1982natural}. The class of QVEFs includes Poisson, gamma, binomial, and negative binomial distributions. If the family of  distribution functions $\cbr{F(\cdot, \theta):\theta\in\Theta}$ is a QVEF, then \ref{Ker2}  is satisfied. Indeed, Equations (8.8) and (8.6) of \citep{morris1982natural} verify \labelcref{eq:moment_unbias} and  \labelcref{eq:moment_quad}, respectively.
\end{example}

\begin{remark}
As a reviewer pointed out, Assumption \ref{Ker2} is somewhat strong and a number of mixture models do not satisfy this. For example, although the Cauchy location mixture model with $f(x,\theta)=(\pi(1+(x-\theta)^2)^{-1}$ is strongly identifiable (by \cite[][Theorem 3]{chen1995optimal}) and so can be analyzed under a different theoretical framework given in \cref{sec:heinrich}, it does not satisfy  Assumption \ref{Ker2} since the Cauchy distribution does not have finite moments of order greater than or equal to 1.
\end{remark}

Since we consider a general set of atoms $\Theta\subset\R$ rather than the interval $[-L,L]$ to include, for example, scale mixtures and exponential family mixtures, Assumption \ref{p_a3} is slightly modified 
to \cref{eq:prior_theta} being met for any $k\in\bN$ and $\theta^0\in\Theta^k$. We also assume the kernel distribution function $F(\cdot, \cdot)$ is known, i.e., no misspecification of the kernel distribution function. That is, we consider the posterior distribution denoted by $\Pi_F(\cdot|X_{1:n})$, which is defined as
    \begin{equation}
        \Pi_F(\d\nu|X_{1:n}):=\frac{p_{\nu\bullet F}^{(n)}(X_{1:n})\Pi(\d\nu)}{\int p_{\nu\bullet F}^{(n)}(X_{1:n})\Pi(\d\nu)}.
    \end{equation}
Then all the results in \cref{sec:main} can be recovered by the posterior distribution $\Pi_F(\cdot|X_{1:n})$ on the mixture model that satisfies  \cref{assume:cdf}.

\begin{theorem}
\label{thm:general}
Assume that $k^\star\le\bar{k}_n\lesssim \log n/\log \log n$. Moreover, assume that the family $\{F(\cdot, \theta):\theta\in\Theta\}$ of distribution functions on $\R$ satisfies  \cref{assume:cdf} and the prior distribution $\Pi$ satisfies  \cref{assume:prior}. Then the followings are hold:
    \begin{enumerate}[label=(\alph*)]
        \item \label{thm:general:a} It follows that
            \begin{equation}
                \inf_{\nu^\star\in \cM_{k^\star}}\P_{\nu^\star*\Phi}^{(n)}\sbr{\Pi_F(\nu\in \cM_{k^\star}|X_{1:n})}\to1;
            \end{equation}
            
        \item \label{thm:general:b} There exists an universal constant $M_1>0$ such that 
            \begin{equation}
                 \sup_{\nu^\star\in \cM_{k^\star}}\P_{\nu^\star\bullet F}^{(n)}\sbr{\Pi_F\del{\sW_1(\nu,\nu^\star)\ge M_1 \bar{\epsilon}_{n,k^\star}\big|X_{1:n}}}=o(1),
            \end{equation}
            where  $\bar{\epsilon}_{n,k^\star}$ is the rate defined in \labelcref{eq:rate};
            
        \item There exist universal constants $M_2>0$ and $M_3>0$ such that if $\gamma\omega> M_2\bar{\epsilon}_{n,k^\star}$ then 
            \begin{equation}
                 \sup_{\nu^\star\in \cM_{k^\star, k_0, \gamma,\omega}}\P_{\nu^\star\bullet F}^{(n)}\sbr{\Pi_F\del{\sW_1(\nu,\nu^\star)\ge M_3\epsilon_{n,k^\star,k_0,\gamma} \big|X_{1:n}}}=o(1),
            \end{equation}
        where $\epsilon_{n,k^\star,k_0,\gamma}$ is the rate defined in \labelcref{eq:rate_adap};
        
        \item There exist universal constants $\tau>0$ and $M_4>0$ such that
    \begin{equation}
         \sup_{\nu^\star\in \cM_{k^\star}:\sW_1(\nu^\star, \nu_0)<\tau}\P_{\nu^\star* \Phi}^{(n)}\sbr{\Pi_F\del{\sW_1(\nu,\nu^\star)\ge M_4\epsilon_{n,k^\star,k_0,1} \big|X_{1:n}}}=o(1)
    \end{equation}
    for any  fixed distribution $\nu_0\in\cM_{k_0}\setminus\cM_{k_0-1}$;

        \item \label{thm:general:e} There exists an universal constant $M_5>0$ such that if $ \gamma\omega >M_5\max\{\bar\epsilon_{n,k^\star}, \epsilon_{n,k^\star,k^\star,\gamma}\}$ then
    \begin{equation}
       \inf_{\nu^\star\in \cM_{k^\star, k^\star, \gamma,\omega}} \P_{\nu^\star*\Phi}^{(n)}\sbr{\Pi_F\del{\nu\in \cM_{k^\star}\setminus\cM_{k^\star-1}|X_{1:n}}}\to 1.
    \end{equation}
    \end{enumerate}
\end{theorem}

The proof of the theorem is straightforward, but for the sake of completeness, we provide it in \cref{sec:proof:general}.

We have considered  mixture models with the number of components $k^\star$ satisfying  $k^\star\le\bar{k}_n\lesssim \log n/\log \log n$ for general kernel functions. For higher-order mixture models with general kernel functions, we can obtain the same convergence rate as the one in \cref{thm:mixing_high}, which proves convergence rates for  higher-order Gaussian location mixtures.

\begin{theorem}
\label{thm:general_high}
Assume that the family $\{F(\cdot, \theta):\theta\in\Theta\}$ of distribution functions on $\R$ satisfies  \cref{assume:cdf} and the prior distribution $\Pi$ satisfies \ref{p_a1_m}, \ref{p_a2} and \ref{p_a3}. Then
    \begin{equation}
        \sup_{\nu^\star\in \cM}\P_{\nu^\star\bullet F}^{(n)}\sbr{\Pi_F\del{\sW_1(\nu,\nu^\star)\ge M\frac{\log\log n}{\log n}\big|X_{1:n}}}=o(1)
    \end{equation}
for some universal constant $M>0$.
\end{theorem}

\section{Dirichlet process mixtures for inference of  finite mixtures}
\label{sec:dp}

In this section, we consider Dirichlet process (DP)  prior \citep{ferguson1973bayesian} on the mixing distribution which results in an infinite mixture model-- the popular Dirichlet process (DP) mixture model. Although a DP mixture model is minimax optimal in density estimation \citep{ghosal2001entropies, ghosal2007posterior},  it suffers from a very slow convergence rate of $(\log n)^{-1/2}$ in estimating the mixing distribution of the Gaussian location mixtures as shown by \citep{nguyen2013convergence}. Their result assumes that the number of component $k^\star$ is fixed.  We consider the DP prior for the mixture distribution estimation and derive the posterior contraction rates in the most general set up by allowing the number of the components of the true mixing distribution to grow.  Further more, we adopt a natural strategy of  using the \emph{number of the clusters $T$ of the data}  to estimate the number of components and we establish posterior consistency of such a procedure. 

Note that the DP prior does not satisfy Assumption \ref{p_a1}, and thus the theorems in \cref{subsec:post_con} do not cover the case of DP prior. This section aims to separately analyze concentration properties of the posterior of the DP mixture models.

In our Gaussian location mixture setup, the DP is a distribution on \textit{infinite}-atomic distributions of the form
    \begin{equation}
    \label{eq:infinite_atomic}
        \tilde{\nu}:=\sum_{j=1}^\infty w_j\delta_{\theta_j}
    \end{equation}
where $w_1,w_2,\dots\in[0,1]$ are mixing weights such that $\sum_{j=1}^\infty w_j=1$ and $\theta_1,\theta_2,\dots\in[-L,L]$. We let $\cM_\infty$ be the set of distributions of the form \labelcref{eq:infinite_atomic}. The DP with a concentration parameter $\kappa>0$ and base distribution $H$, denoted by $\DP(\kappa, H)$, can be expressed by the following stick-breaking generation process \citep{sethuraman1994constructive}:  $w_j=E_j\prod_{h=1}^{j-1}(1-E_h)$ where $E_j\iidsim \BETA(1,\kappa)$ and $\theta_j\iidsim H.$
Since every $w_j$ generated from the above procedure is positive with probability 1, one can say that $\Pi_{\textsc{dp}}(\tilde\nu\in\cM_\infty\setminus\cM)=1$. This implies that every mixing distribution generated from the posterior of the DP mixture model has infinite number of components, therefore the posterior distribution of the number of components $k$ cannot provide any reasonable estimate of the true number of components.

One possible solution is to use an additional post-processing procedure for the posterior distribution. For example,  \citet{guha2019posterior} proposed the operator $\cT$ to infinite mixing distributions which removes weak components (in a sense that the corresponding weights are very small) and merges similar components (whose  atoms are very close) of an infinite mixing distribution so that $\cT(\tilde\nu)$ is a finite mixing distribution. They proved that for a \textit{fixed} truth $\nu^\star\in\cM_{k^\star}\setminus \cM_{k^\star-1}$, the posterior distribution of the finite mixing distribution $\cT(\tilde\nu)$ obtained after post processing concentrates to the model $\cM_{k^\star}\setminus\cM_{k^\star-1}$ under the DP prior distribution with a fixed concentration parameter.

We propose another way to infer the number of components with the DP prior. Our idea is to use the posterior distribution of the number of clusters, say $T_n$, of the data $X_{1:n}$ as an estimate of the number of components. Note that for $i\in[n]$, $X_i\iidsim \tilde{\nu}*\Phi$ can be written equivalently with the latent \textit{assignment variable} $Z_i\in\bN$ as 
    \begin{align*}
        Z_i&\iidsim w[\tilde{\nu}] :=\sum_{j=1}^{\infty}w_j\delta_{j},\\
        X_i|Z_i&\indsim \N(\theta_{Z_i},1).
    \end{align*}
where $w[\tilde{\nu}]\in\cP(\bN)$ can be viewed as the distribution on $\bN$ such that $w[\tilde{\nu}](J)=\tilde{\nu}(\{\theta_j:j\in J\})$ for any $J\subset \bN$. \textit{The number of clusters} $T_n$ is defined by
    \begin{align*}
        T_n:=T_n(Z_{1:n}):=\abs{\cbr{j\in\bN:\exists i\in[n] \mbox{ s.t. } Z_i=j}}.
    \end{align*}
    
Here we consider the joint posterior distribution of the mixing distribution $\tilde\nu$ and the latent assignment variable $Z_{1:n}$ conditioned on the data $X_{1:n}$, which is given as 
    \begin{align}
        \Pi_{\textsc{dp}}(\d\tilde{\nu}, Z_{1:n}|X_{1:n})
        := \frac{\sbr{\prod_{i=1}^n\phi(X_i-\theta_{Z_i})p_{w[\tilde{\nu}]}(Z_i)}\Pi_{\textsc{dp}}(\d\tilde\nu)}{\int\sum_{Z_{1:n}\in\bN^n}\sbr{\prod_{i=1}^n\phi(X_i-\theta_{Z_i})p_{w[\tilde{\nu}]}(Z_i)}\Pi_{\textsc{dp}}(\d\tilde\nu)},
    \end{align}
where $\phi(\cdot)$ denotes the probability density function of the standard normal distribution  and $\Pi_{\textsc{dp}}$ denotes the DP prior.

Note that the data are still assumed to be generated from the finite Gaussian mixture model $\nu^\star*\Phi$ where $\nu^\star\in\cM_{k^\star}$ for $k^\star\in\bN$ but we allow the number of components to grow at an arbitrary fast speed. Even in such general situations, we show in the following theorem that the DP prior with a suitably chosen concentration parameter can provide a nearly tight upper bound of \textit{the true number of components}.

\begin{theorem}
\label{thm:k_upper_dp}
With the DP prior $\DP(\kappa_n, H)$, where $\kappa_n\asymp n^{-a_0} $ for $a_0>0$ and $H$ is the uniform distribution on $[-L,L]$, we have
    \begin{equation}
        \sup_{\nu^\star\in \cM_{k^\star}}\P_{\nu^\star*\Phi}^{(n)}\sbr{\Pi_{\textsc{dp}}(T_n> C k^\star|X_{1:n})}=o(1)
    \end{equation}
for some constant $C>1$ depending only on the prior distribution.
\end{theorem}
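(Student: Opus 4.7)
My approach is to marginalize the infinite-dimensional $\tilde\nu$ out of the joint posterior via the Dirichlet process / Chinese restaurant process correspondence, reducing the analysis to a posterior over partitions of $[n]$. Using the Polya urn representation and the fact that the atoms are i.i.d.\ from the uniform base distribution $H$ on $[-L,L]$ with density $h=(2L)^{-1}$, the joint law of the data and cluster labels marginalized over $\tilde\nu$ is
    \begin{equation*}
        p(X_{1:n}, Z_{1:n}) \;=\; \frac{\kappa_n^{T_n(Z)}\prod_{j=1}^{T_n(Z)}(n_j-1)!}{\kappa_n(\kappa_n+1)\cdots(\kappa_n+n-1)}\prod_{j=1}^{T_n(Z)} m_j(X;Z),
    \end{equation*}
where $n_j$ is the size of block $j$ and $m_j(X;Z):=\int_{-L}^{L}\prod_{i:Z_i=j}\phi(X_i-\theta)\,h(\theta)\,\d\theta$. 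Summing over partitions with $T_n(Z)=t$ gives $\Pi_{\textsc{dp}}(T_n=t\mid X_{1:n})\propto \kappa_n^t M_t$ with $M_t:=\sum_{Z:T_n(Z)=t}\prod_j(n_j-1)!\,m_j(X;Z)$, so the target quantity is $\sum_{t>Ck^\star}\kappa_n^t M_t/\sum_{s\ge 1}\kappa_n^s M_s$.

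\textbf{Two-sided control of $M_t$.} For the denominator I lower-bound $\sum_s\kappa_n^s M_s\ge \kappa_n^{k^\star}M_{k^\star}$ by restricting $M_{k^\star}$ to a single partition $Z^\dag$ with exactly $k^\star$ blocks, obtained by assigning each observation to its closest true atom. On a high-probability event $\cE_n$ under $\P^{(n)}_{\nu^\star*\Phi}$ on which the induced block sizes satisfy $n_j^\dag\asymp nw_j^\star$ and the within-cluster sums of squares concentrate, a Laplace-type evaluation of the Gaussian integral (tractable by conjugacy) yields a quantitative lower bound on each $m_j(X;Z^\dag)$. For the numerator, the same conjugacy gives the matching upper bound
    \begin{equation*}
         m_j(X;Z)\le \frac{1}{2L\sqrt{n_j}}(2\pi)^{-(n_j-1)/2}\exp\Bigl(-\tfrac12\textstyle\sum_{i:Z_i=j}(X_i-\bar X_j)^2\Bigr)
    \end{equation*}
valid for every $Z$. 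Summing over $Z$ with $T_n(Z)=t$ then uses the classical identity $\sum_{Z:T_n(Z)=t}\prod_j(n_j-1)!=|s(n,t)|$ together with the Stirling-number estimate $|s(n,t)|\le (n-1)!(\log n)^{t-1}/(t-1)!$ to separate the combinatorial cost from the residual likelihood.

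\textbf{Combining and main obstacle.} With $\kappa_n^{(n)}\asymp \kappa_n(n-1)!$ and $\kappa_n\asymp (n\log n)^{-1}$, the purely combinatorial factor $\kappa_n^t|s(n,t)|/\kappa_n^{(n)}$ is of order $n^{-(t-1)}/(t-1)!$, providing strong geometric shrinkage in $t$. The residual likelihood ratio $\prod_j m_j(X;Z)/\prod_j m_j(X;Z^\dag)$ is controlled on $\cE_n$ using the standard overfitting bound for finite Gaussian mixtures: the profile log-likelihood of an over-fit $t$-cluster partition exceeds that of $Z^\dag$ by at most $O((t-k^\star)\log n)$ uniformly, which is of the same flavor as the entropy and covering arguments used in the proof of \cref{thm:mixing}. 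Combining yields a ratio of the form
    \begin{equation*}
        \frac{\kappa_n^t M_t}{\kappa_n^{k^\star}M_{k^\star}} \;\lesssim\; (C_1\cdot n^{\alpha-1}\log n)^{t-k^\star}
    \end{equation*}
for some $C_1>0$ and $\alpha<1$, so that summing over $t>Ck^\star$ with $C$ sufficiently large (depending on the prior) is $o(1)$ on $\cE_n$; since $\P^{(n)}_{\nu^\star*\Phi}(\cE_n^c)=o(1)$, the desired bound on the expectation follows. \textbf{The main obstacle} is establishing the uniform overfit estimate with $\alpha<1$ on a high-probability event: the $\sqrt{n_j}$ factors from Gaussian conjugacy and the singleton penalty $m_j\le (2L)^{-1}$ must jointly absorb both the combinatorial explosion of $|s(n,t)|$ and any likelihood gain from over-splitting a true cluster into several artificial smaller ones.
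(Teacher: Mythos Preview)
Your plan diverges from the paper's proof, and the step you flag as ``the main obstacle'' is in fact a genuine gap. The claim that the profile log-likelihood of an over-fit $t$-cluster partition exceeds that of $Z^\dag$ by at most $O((t-k^\star)\log n)$ \emph{uniformly over partitions} is false. Take $k^\star=1$, so the data are i.i.d.\ $\N(\theta^\star,1)$, and consider the partition $Z_{\rm bad}$ that splits $[n]$ into the indices of the $n/2$ largest and $n/2$ smallest observations. The within-block sum of squares drops by $\Theta(n)$ (the between-means gap is $\Theta(1)$, not $O(n^{-1/2})$), so $\log\prod_j m_j(X;Z_{\rm bad})-\log m_1(X;Z^\dag)=\Theta(n)$, not $O(\log n)$. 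This $e^{cn}$ likelihood gain \emph{is} killed by the tiny CRP weight $((n/2-1)!)^2/(n-1)!\asymp 2^{-n}\sqrt n$, but your decomposition $M_t\le |s(n,t)|\cdot\max_Z\prod_j m_j(X;Z)$ separates exactly these two compensating factors and is therefore too crude to close. Your statement that the ``purely combinatorial factor'' is $\kappa_n^t|s(n,t)|/\kappa_n^{(n)}$ is also off: the rising factorial $\kappa_n^{(n)}$ cancels in the posterior ratio, and what actually needs bounding is $\kappa_n^{t-k^\star}M_t/M_{k^\star}$.

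The paper avoids all of this with a short Fubini argument. Write the posterior as numerator over denominator with $p_{\nu^\star*\Phi}^{(n)}$ in both. For the denominator, use the standard evidence lower bound via the prior mass of a KL ball (\cref{lem:bound_kl_hell}, \cref{lem:l1_bound_mix}, \cref{lem:dirichlet}), which gives a high-probability lower bound of order $\kappa_n^{k^\star+1}\e^{-c k^\star\log n}$. For the numerator, take the $\P_{\nu^\star*\Phi}^{(n)}$-expectation \emph{before} bounding: by Fubini the integral $\int\prod_i\phi(X_i-\theta_{Z_i})\,\d X_{1:n}$ equals $1$ for every $(Z,\tilde\nu)$, so the expected numerator collapses to the \emph{prior} probability $\P_{\textsf{CRP}(\kappa_n)}(T_n>Ck^\star)$, with no likelihood analysis at all. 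One then bounds this CRP tail via the explicit Stirling-number formula $\P_{\textsf{CRP}(\kappa_n)}(T_n=t)=C_n(t)\,n!\,\kappa_n^t\,\Gamma(\kappa_n)/\Gamma(\kappa_n+n)$ and the elementary ratio estimate $C_n(t+1)/C_n(t)\le\log(\e(n-1))$, giving $\P_{\textsf{CRP}(\kappa_n)}(T_n>Ck^\star)\lesssim(\kappa_n\log n)^{Ck^\star-1}\asymp n^{-(Ck^\star-1)}$. Combining the two pieces and choosing $C$ large enough finishes the proof. The moral: rather than controlling the data-dependent ratio $M_t/M_{k^\star}$ on an event, take expectation of the numerator first so the likelihood integrates out.
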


\citet{miller2013simple,miller2014inconsistency} showed that the posterior distribution of the number of clusters  does not concentrate at the true number of components if one uses the DP prior with a \textit{constant concentration parameter}. In particular, if the true data generating process is $\N(0,1)=\delta_0 *\Phi$, the posterior probability that the number of components is equal to the true number of components (i.e., 1) goes to zero \citep[][Theorem 5.1]{miller2013simple}. Our proposed sample size dependent concentration parameter resolves this inconsistency. See our simulation study in Section \ref{sec:numerical} for numerical confirmations. 

\begin{remark}
Under the MFM prior of \cite{miller2018mixture}, which is an example of prior distributions considered in \cref{sec:main}, the posterior distribution of $T_n$ is asymptotically the same as the one of $k$. \citet{miller2018mixture} proved that $|\Pi(k=k^\circ|X_{1:n})-\Pi(T_n=k^\circ|X_{1:n})|\to0$ almost surely for $k^\circ\in\bN$ as long as $\Pi(k=k')>0$ for any $k'\in[k^\circ]$. In view of this fact, the number of clusters $T_n$ can be used to infer the true number of clusters $k^\star$  even if we use the MFM prior distribution.
\end{remark}

\begin{remark}
One may wonder whether the choice of the concentration parameter $\kappa_n\asymp n^{-a_0} $ would lead to slower posterior contraction rate when the DP mixture model is used for \emph{density estimation} as a DP mixture model is commonly adopted for. It turns out that it would not.   In fact, even for $\kappa_n\asymp n^{-a_0} $, one can show that there is a universal constant $M>0$ such that
    \begin{equation*}
        \P_{\nu^\star*\Phi}^{(n)}\sbr{\Pi_{\textsc{dp}}\del{\fH(p_{\tilde\nu*\Phi}, p_{\nu^\star*\Phi})\ge M\frac{\log^c n}{\sqrt{n}}|X_{1:n}}}=o(1)
    \end{equation*}
for any $\nu^\star\in\cP([-L,L])$, for some $c>0.$ One can easily check the above result.
Following the proof of Theorem 5.1 of \citep{ghosal2001entropies} and applying \cref{lem:dirichlet}, we can see that the prior concentration near the true mixing distribution is lower bounded by $(n^{-a_0})^{c_1\log n}\gtrsim \exp(-a_0c_1\log ^2n)$ for some $c_1>0$. Thus usual prior mass and testing approach leads to the conclusion in the preceding display for estimating the density.
\end{remark}

For the estimation of the mixing distribution (of general order), we obtain the following convergence rate for the DP mixture model.

\begin{theorem}
\label{thm:mixing_dp}
With the DP prior $\DP(\kappa_n, H)$, where $\exp(-c_0\log^{b_0}n)\lesssim \kappa_n\lesssim 1$ for some  $b_0>0$ and $c_0>0$ and $H$ is the uniform distribution on $[-L,L]$,  we have
    \begin{equation}
        \sup_{\nu^\star\in \cM}\P_{\nu^\star*\Phi}^{(n)}\sbr{\Pi_{\textsc{dp}}\del{\sW_1(\tilde\nu,\nu^\star)\ge M\frac{\log\log n}{\log n}\big|X_{1:n}}}=o(1)
    \end{equation}
for some universal constant $M>0$.
\end{theorem}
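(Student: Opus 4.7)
The plan is to decompose the proof into two largely independent pieces: (i) a density estimation rate for $p_{\tilde\nu*\Phi}$ in Hellinger distance under the DP mixture posterior, and (ii) a deterministic conversion from Hellinger closeness of Gaussian mixture densities to $\sW_1$ closeness of the mixing distributions on $[-L,L]$. The first piece is essentially already stated in the Remark preceding the theorem: for $\kappa_n \in [\e^{-c\log^a n}, 1]$ and uniform base $H$ on $[-L,L]$, the standard Ghosal--Ghosh--van der Vaart prior mass plus testing argument yields $\fh(p_{\tilde\nu*\Phi}, p_{\nu^\star*\Phi}) \lesssim (\log n)^{a'}/\sqrt{n}$ with $\P_{\nu^\star*\Phi}^{(n)}$-probability tending to one. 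The KL prior concentration lower bound $\exp(-c''\log^2 n)$ around $\nu^\star$ is the key input; it is proved by coupling the stick-breaking construction with \cref{lem:dirichlet} to control weights, and by using the strict positivity of the uniform density on $[-L,L]$ to place atoms within $\epsilon$ of the $\theta_j^\star$'s. The sieve-and-entropy side follows from truncating the stick-breaking representation at a level where the tail weight is $O(\e^{-c n \epsilon_n^2})$, then bounding the covering number of Gaussian location mixtures on $[-L,L]$ in the usual fashion.

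The substantive piece is (ii), for which the plan is as follows. First, a Hermite-expansion argument shows that for any atomic $\nu, \nu'$ on $[-L,L]$ and any integer $r\ge 1$,
\begin{equation*}
 |m_r(\nu) - m_r(\nu')| \;\lesssim\; C^r \sqrt{r!}\cdot \fh(p_{\nu*\Phi}, p_{\nu'*\Phi}),
\end{equation*}
where $C$ depends only on $L$; this follows because the coefficients of $p_{\nu*\Phi}-p_{\nu'*\Phi}$ in the Hermite basis adapted to $\Phi$ are moments of $\nu-\nu'$ up to explicit factorial factors, and $L^2$-closeness of densities gives $\ell^2$-closeness of coefficients. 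Second, apply the moment comparison inequality of \cite{wu2018optimal} (\cref{lem:moment_comp}) in the form: for any probability measures $\nu, \nu'$ on $[-L,L]$ and any $k\ge 1$,
\begin{equation*}
 \sW_1(\nu, \nu') \;\lesssim\; \frac{L}{k} + L\,k^{1/2}\left(\frac{\|m_{1:2k-1}(\nu) - m_{1:2k-1}(\nu')\|_\infty}{L^{2k-1}}\right)^{1/(2k-1)}.
\end{equation*}
Crucially this holds for any (possibly infinite-atomic) $\nu$ on $[-L,L]$, via the Gauss--quadrature approximation underlying the Hausdorff moment problem, so it applies directly to the DP-generated $\tilde\nu$.

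Composing the two inequalities with $r = 2k-1$ and Stirling's approximation $(\sqrt{(2k-1)!})^{1/(2k-1)} \asymp \sqrt{k}$ yields
\begin{equation*}
 \sW_1(\tilde\nu, \nu^\star) \;\lesssim\; \frac{1}{k} \;+\; k\,\fh(p_{\tilde\nu*\Phi}, p_{\nu^\star*\Phi})^{1/(2k-1)}.
\end{equation*}
On the event from Step (i), $\fh \lesssim (\log n)^{a'}/\sqrt n$, so the second term is $\lesssim k \exp\!\bigl(-\tfrac{\log n - 2a'\log\log n}{2(2k-1)}\bigr)$. Equating the two summands gives $k \asymp \log n/\log\log n$, and both terms are then of order $\log\log n/\log n$. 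This is exactly the claimed rate; integrating the tail bound over the posterior on the complement of the high-probability event (which has vanishing posterior mass by Step (i)) completes the proof.

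The main obstacle will be the prior-mass computation for the DP in the regime $\kappa_n \to 0$: for very small concentration parameter, $E_j \sim \BETA(1,\kappa_n)$ concentrates near $1$, so the probability of having $k^\star$ sticks each carrying nontrivial mass near the prescribed weights $w_j^\star$ degrades like $\kappa_n^{k^\star-1}$. Tracking this factor carefully, one needs $k^\star(|\log \kappa_n| + \log(1/\epsilon)) \lesssim n\epsilon_n^2$, which for $\kappa_n\ge \e^{-c\log^a n}$ and $\epsilon_n \asymp (\log n)^{a'}/\sqrt n$ still holds under the hypothesis $\nu^\star \in \cM$; nevertheless verifying this uniformly in $k^\star$ (which is allowed to grow) is the delicate accounting step, and is where one has to be careful if one wanted to relax the lower bound on $\kappa_n$.
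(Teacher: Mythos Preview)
Your approach is correct but genuinely different from the paper's. The paper does not pass through a Hellinger density rate at all: it reuses the machinery of \cref{thm:mixing_high}, i.e.\ a coarse prior-mass bound obtained by partitioning $[-L,L]$ into $R\asymp n^{1/3}\log n$ bins (so $D_n\gtrsim \exp(-c n^{1/3}\log^{1+a}n)$ via \cref{lem:dirichlet}), together with the \emph{linear} moment comparison of \cref{lem:moment_high} and the median--denoised moment test of \cref{lem:moment_tail}, which delivers error $\exp(-c n^{1/2}/\log^4 n)$ and thus beats the prior-mass denominator. Your route instead first gets the $\fh$-rate $(\log n)^{a'}/\sqrt n$ (essentially the Remark preceding the theorem), then converts deterministically: $|m_r(\nu)-m_r(\nu')|\lesssim C^r\sqrt{r!}\,\fh$ via Hermite expansion, followed by the Gauss-quadrature extension of \cref{lem:moment_comp} to arbitrary $\nu\in\cP([-L,L])$. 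Both pipelines land at the same optimization $k\asymp \log n/\log\log n$ and rate $\log\log n/\log n$. Your route is more modular (it recycles the density-estimation contraction and needs no bespoke test), but you should be aware that your moment comparison formula is not literally \cref{lem:moment_comp}: the $1/k$ additive term comes from the Gauss-quadrature step (equivalently from \cref{lem:moment_high}), and the prefactor in front of the moment term is $k$, not $k^{1/2}$, so your displayed inequality needs a small correction. For the Hermite step, note that \cref{lem:bound_moment_l2} is stated only for $\nu\in\cM_k$ because its proof uses polynomial interpolation at the atoms; the clean way to get your bound for arbitrary $\nu\in\cP([-L,L])$ is to write $m_r(\nu)=\E_{\nu*\Phi}[H_r(X)]$ and use Cauchy--Schwarz together with the elementary estimate $\E_{\nu*\Phi}[H_r(X)^2]\le r!\,C_L^r$ (via the Hermite addition formula), which holds uniformly over $\nu$ supported in $[-L,L]$.
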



As one can see from our theorem above, if the true mixing distribution is of high order such that $k^\star \asymp \log n/\log \log n$, the posterior of the DP mixture model attains the minimax optimality \citep[][Theorem 5]{wu2018optimal}. However,  unlike the Bayesian procedure proposed in \cref{sec:main}, we conjecture that  posterior of the DP mixture model cannot obtain an improved convergence rate for estimating a mixing distribution when the true number of components grows slowly, say $k^\star \ll \log n/\log \log n$, because it tends to produce many redundant components.   \citet{nguyen2013convergence} analyzed the posterior of  Dirichlet process mixture endowed with a fixed concentration parameter for estimating mixing distribution  with a \textit{fixed} number of components and obtain  a slow convergence rate $(\log n)^{-1/2}$ with respect to the second-order Wasserstein distance.

\section{Numerical examples}
\label{sec:numerical}

\subsection{Simulation study}

We conduct numerical experiments to validate our theoretical findings. For the prior distribution, we use a MFM prior consisting of a Poisson distribution with mean $\lambda$ on the number of components, the Dirichlet distribution on the weights and the uniform distribution on the atoms.  For the Dirichlet distribution prior on the mixing weights, we fix its concentration parameter as a $k$-dimensional vector of 1's. For the mean parameter of the Poisson distribution, we consider the following two choices: the constant one and the one decaying with an appropriate order depending on the  sample size. We call the former \texttt{MFM\_const} and the latter \texttt{MFM\_vary}. Note that \texttt{MFM\_vary} is motivated by our theory. Python codes for reproducing the results in this section are available at \href{https://github.com/ilsangohn/bayes_mixture}{https://github.com/ilsangohn/bayes\_mixture}

\subsubsection{Inference for the mixing distribution}

We compare the performance of the proposed Bayesian method with other competitors. We consider the denoised method of moment (\texttt{DMM}) estimator proposed by \citep{wu2018optimal} and the maximum a posteriori (MAP) estimator with the Dirichlet distribution prior on the weights and the uniform distribution prior on the atoms. In the implementation of the \texttt{DMM} algorithm, we use the authors' Python codes which are available on their github repository (\href{https://github.com/albuso0/mixture}{https://github.com/albuso0/mixture}). We consider the MAP estimators of two types of mixture models: exact-fitted and over-fitted mixtures. The number of components of the exact-fitted mixture is exactly equal to the true number of components and the one of the over-fitted mixture is some upper bound $\bar{k}$ of the true number of components, in this simulation, we set $\bar{k}=2k^\star$. We call the MAP estimator of the exact-fitted mixture  \texttt{MAP\_exact} and the one of the over-fitted mixture \texttt{MAP\_over}.  We use the standard expectation-maximization (EM) algorithm to obtain MAP estimators. For the proposed Bayesian method, we use the posterior mode of the mixing distribution as an estimator. We obtain such a mode by applying the EM algorithm to mixture models with the different numbers of components and selecting the best number of components $\hat{k}$ which maximizes the posterior density of the mode. We consider the two choices of the mean parameter of the Poisson prior, $\lambda_n=\exp(-0.05\log^2n/\log\log n))$ (\texttt{MFM\_vary}) and $\lambda_n=1$ (\texttt{MFM\_const}). For all the four Bayesian methods, we set the support of the uniform distribution prior the interval $[-6,6]$ and the concentration parameter of the Dirichlet distribution prior the vector of 1's.  

We generated synthetic data sets from a Gaussian mixture model $\nu^\star*\Phi$ with $\nu^\star:=\sum_{j=1}^{k^\star}w_{j}^\star\delta_{\theta_j^\star}$. We consider the following four different cases of the true mixing distribution.
    \begin{enumerate}[label=Case \arabic*, leftmargin=1.5cm]
        \item \label{simul_case1} (Well-separated) $\theta^\star=(-3,-1,1,3)$, $w^\star=(\frac{1}{4}, \frac{1}{4}, \frac{1}{4}, \frac{1}{4})$
        \item \label{simul_case2} (Overlapped components) $\theta^\star=(-1.5,-1,1,3)$, $w^\star=(\frac{1}{4}, \frac{1}{4}, \frac{1}{4}, \frac{1}{4})$
        \item \label{simul_case3} (Weak component) $\theta^\star=(-3,-1,1,3)$,  $w^\star=(\frac{2}{5}, \frac{1}{10}, \frac{1}{4}, \frac{1}{4})$
        \item \label{simul_case4} (Higher-order) $\theta^\star=(-6,-4,-2,0,2,4,6)$, $w^\star=(\frac{1}{7}, \dots, \frac{1}{7})$ 
    \end{enumerate}
For \ref{simul_case1}, all the true components are well-separated. The true components from  \ref{simul_case2} and \ref{simul_case3} are not well-separated. In  \ref{simul_case2}, there are two close atoms and in \ref{simul_case3}, there is a weak component. \ref{simul_case4} is a higher-order mixture setup. For each setup, we let the sample size $n$ range over $\{250, 500, \dots, 2000\}.$ We repeat this data generation 50 times for each experiment and report the average of the first order Wasserstein distance between each estimator and the true mixing distribution.

\cref{fig:mixing_est} displays the average of the  the first order Wasserstein errors of the five estimators for the four cases of the data generating process. Contrary to its theoretical optimality, \texttt{DMM} performs the worst among the five estimators for all the scenarios. The performance gap of DMM to the Bayesian methods are the largest for \ref{simul_case4}. We observed that there is numerical instability of the \texttt{DMM} implementation when the number of components is larger than 5, which results in failure of computation. Thus we fixed the number of components as 4 instead of 7. This  leads to the poor performance of the method. 
The overall result seems to inconsistent to the simulation result of the \texttt{DMM} paper \citep{wu2018optimal}, which showed better or at least competitive performance of \texttt{DMM} compared to the MAP estimators. But the simulation setup is different. The authors of \citep{wu2018optimal} considered the two simulation scenarios, the first case where the true five number of components are very close to each other (indeed, $\theta^\star=(-0.236, -0.168, -0.987, 0.299, 0.150)$ and $w^\star=(0.123, 0.552, 0.010, 0.080, 0.235)$) so that the corresponding mixture density seems to be unimodal, and the second case where there are only two components of the true mixing distribution. Thus, we conjecture that \texttt{DMM} performs worse for  mixture densities with many modes. However, as one of the  reviewers pointed out, this conjecture could be potentially ungrounded. Another possible reason behind this is numerical instability. Since moments typically span several orders of magnitude, semidefinite programming in DMM procedures can suffer from numerical instabilities (see \cite{backenkohler2020bounding}).
This could be the reason behind the  relatively poor performance of DMM estimator.

Generally, all the four Bayesian methods performs almost similar.
For \ref{simul_case1}, the over-fitted mixture model \texttt{MAP\_over} performs worse than the other Bayesian methods, but does similar for the other three cases. For \ref{simul_case2} and \ref{simul_case3}, \texttt{MFM\_vary} tends to select the smaller mixture than the true mixture, in general, its posterior distribution is  maximized at $k=3$ which is less than the true one $k^\star=4$. Note that this does not contradict our theoretical results where we establish the consistent estimation of the number of well-separated components, which might be equal to 3 in these two cases. This leads to slightly better performance for \ref{simul_case2} where overlapped components exist and slightly worse performance for \ref{simul_case3} where weak components exist. 
Overall, knowing the true number of components does not give substantial improvement of empirical performance. Our theory for optimal estimation of the mixing distribution is built on the result of vanishing posterior probability of overestimation of the number of components (\cref{thm:k_upper}). It would be interesting to investigate the optimality of Bayesian posteriors that do not enjoy such asymptotic property, for instance, the overfitted Bayesian mixture model.

\begin{figure}[t]
    \centering   
    \begin{subfigure}[c]{0.45\textwidth}
        \includegraphics[scale=0.48]{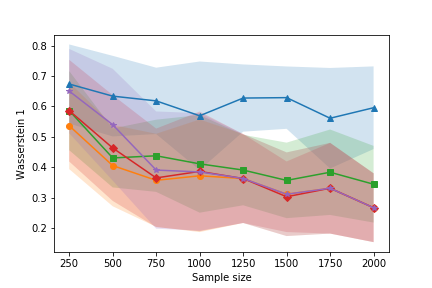}
        \subcaption{\ref{simul_case1}}
    \end{subfigure}\quad
    \begin{subfigure}[c]{0.45\textwidth}
        \includegraphics[scale=0.48]{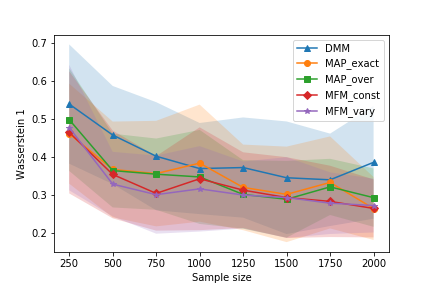}
        \subcaption{\ref{simul_case2}}
    \end{subfigure}
        \begin{subfigure}[c]{0.45\textwidth}
        \includegraphics[scale=0.48]{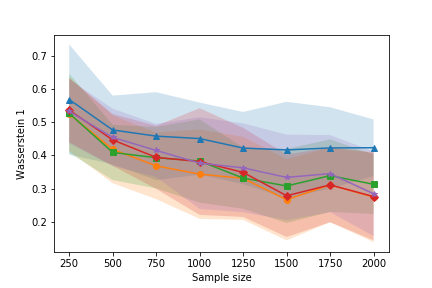}
        \subcaption{\ref{simul_case3}}
    \end{subfigure}\quad
    \begin{subfigure}[c]{0.45\textwidth}
        \includegraphics[scale=0.48]{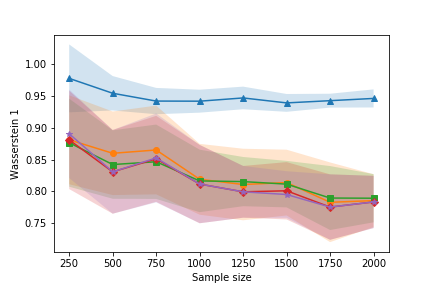}
        \subcaption{\ref{simul_case4}}
    \end{subfigure}
    \caption{The average (curve) and the standard deviation (band) of the first-order Wasserstein errors of five estimators by sample size.}
    \label{fig:mixing_est}
\end{figure}

\subsubsection{Inference for the number of components}

In this experiment, we assess the performance of the proposed Bayesian procedure and the DP mixture model with sample size dependent hyperparameters. We generated the Gaussian mixture with atoms $(-2,0,2)$ and equal weights $(1/3, 1/3, 1/3)$. Five independent data sets are generated from this Gaussian mixture model for each sample size $n\in\{50, 100, 250, 1000, 2500\}$. We compare four Bayesian methods: the two MFM models with Poisson mean parameter $\lambda_n=10\exp\del{-\frac{1}{5}\frac{\log^2n}{\log\log n}}$ (\texttt{MFM\_vary}) and $\lambda_n=1$ (\texttt{MFM\_const}) and the two DP mixtures models with concentration parameter $\kappa_n=20/n$ (\texttt{DP\_vary}) and $\kappa_n=0.4$ (\texttt{DP\_const}).  We use the uniform distribution on $[-6,6]$ for both the prior on the atoms for the MFM and the base distribution for the DP mixture.

For posterior computation for the MFM models, we employ the reversible jump MCMC algorithm of \citep{richardson1997bayesian}. For each posterior computation, we ran a single Markov chain with length 105,000. We saved every 100-th sample after a burn-in period of 5,000 samples. On the other hand, we use Neal’s Algorithm 8 \citep{neal2000markov} for non-conjugate priors to compute the posterior distributions of the DP mixtures.

\cref{fig:post_nc} presents the posterior distributions of the number of components for the two MFMs and of the number of cluster for the two DP mixtures, respectively. It clearly shows that the diminishing choices of hyperparameter advocated by our theory outperforms the constant counterparts. It is worth to notice that the posterior distribution of \texttt{DP\_vary} captures the true number of components well for large samples. It is a widely observed that the DP mixture tends to produce redundant clusters, in particular,  \citet{miller2018mixture} and \citet{guha2019posterior} observed this phenomenon in their simulation studies, however our simulation shows that a sample size dependent concentration parameter inversely related to the sample size can  circumvent this issue.

\begin{figure}[t]
    \centering   
    \begin{subfigure}[c]{0.45\textwidth}
        \includegraphics[scale=0.48]{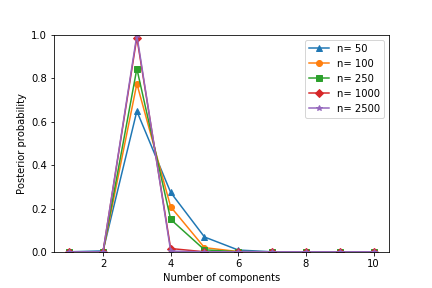}
        \subcaption{MFM with $\lambda_n=10\exp\del{-\frac{1}{5}\frac{\log^2n}{\log\log n}}$}
    \end{subfigure}\quad
    \begin{subfigure}[c]{0.45\textwidth}
        \includegraphics[scale=0.48]{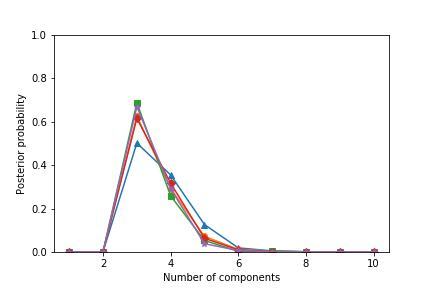}
        \subcaption{MFM with $\lambda_n=1$}
    \end{subfigure}
        \begin{subfigure}[c]{0.45\textwidth}
        \includegraphics[scale=0.48]{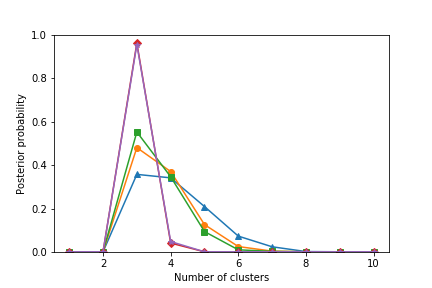}
        \subcaption{DP with $\kappa_n=20/n$ }
    \end{subfigure}\quad
    \begin{subfigure}[c]{0.45\textwidth}
        \includegraphics[scale=0.48]{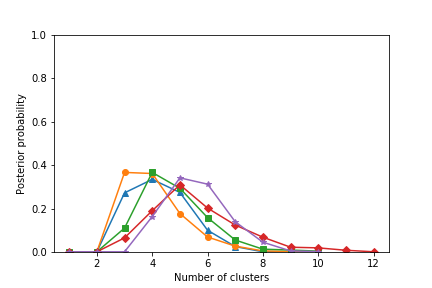}
        \subcaption{DP with $\kappa_n=0.4$ }
    \end{subfigure}
    \caption{Posterior distribution of the number of components for the MFM and of the number of clusters for the DP mixture. The true number of components is 3.}
    \label{fig:post_nc}
\end{figure}

\subsection{Real data analysis}

\subsubsection{Galaxy data}

In this section, we consider an application to the galaxy data of \citet{roeder1990density} which record velocity measurements (1,000 Km/sec) of 82 galaxies from the Corona Borealis region. This data set has been widely used as a benchmark for mixture modelling methods, e.g., \cite{escobar1995bayesian, stephens2000bayesian, nobile2007bayesian, drton2017bayesian}

To gain flexibility, we used the Gaussian location-scale mixture model instead of the Gaussian location mixture model that we have focused on. We considered the MFM and DP prior distributions. The MFM prior consists of $\Pois(\lambda)$ on the number of components, $\Dir(1,\dots,1)$ on the mixing weights, $\unif([0,40])$ on the location parameter and $\GAMMA(1,1)$ on the scale parameter. We fitted the model with the MFM prior with five different values of the mean parameter $\lambda$ of the Poisson distribution: $3,1,0.5,0.1$ and $0.01$. The base distribution of the DP for the location parameter is chosen to be the same as the MFM, but we choose the inverse gamma distribution in order to employ conjugacy. We fitted the DP mixture model with five different values of the concentration parameter $\kappa$, which are the same as $\lambda$.

\cref{fig:garaxy_ncomp} presents a posterior distribution of the number of components for the MFM and the one of the number of clusters for the DP mixture. The shape of the posterior distribution is substantially different by the choices of the hyperparameters $\lambda$ for the MFM and $\kappa$ for the DP. The posterior distribution of the number of clusters for the DP mixture model with a small concentration parameter concentrates near the value of 3, as the posterior distribution of the number of components for the MFM does. This result implies that the DP mixture can be used as a proxy of the MFM for estimating the number of components when the small concentration parameter is used, as our theory suggests. \cref{fig:garaxy_density} displays posterior predictive densities for the MFM and DP mixture as well as histogram of the galaxy data. For density estimation, it also seems that the choice of the hyperparameters is more critical than the choice among the MFM and DP mixture.

\begin{figure}
    \centering   
    \begin{subfigure}[c]{0.45\textwidth}
        \includegraphics[scale=0.48]{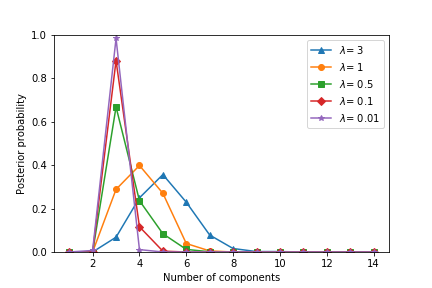}
        \subcaption{MFM}
    \end{subfigure}\quad
    \begin{subfigure}[c]{0.45\textwidth}
        \includegraphics[scale=0.48]{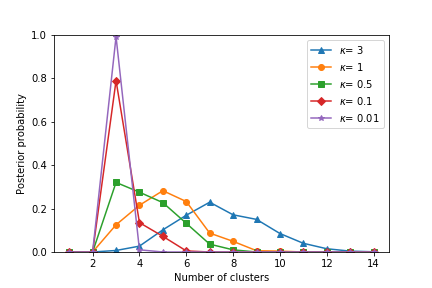}
        \subcaption{DP}
    \end{subfigure}
    \caption{Posterior distribution of the number of components for the MFM and of the number of clusters for the DP mixture for the galaxy data}
    \label{fig:garaxy_ncomp}
\end{figure}

\begin{figure}[ht]
    \centering   
    \begin{subfigure}[c]{0.45\textwidth}
        \includegraphics[scale=0.48]{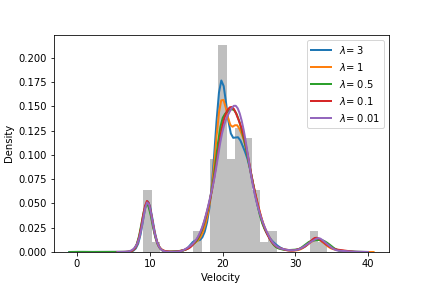}
        \subcaption{MFM}
    \end{subfigure}\quad
    \begin{subfigure}[c]{0.45\textwidth}
        \includegraphics[scale=0.48]{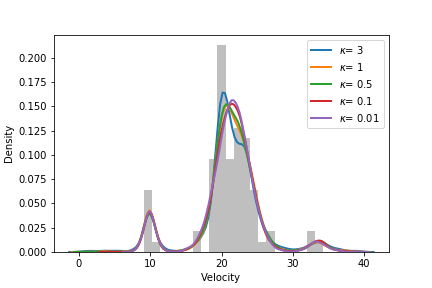}
        \subcaption{DP}
    \end{subfigure}
    \caption{Histogram of the galaxy data with posterior predictive density estimates}
    \label{fig:garaxy_density}
\end{figure}

\subsubsection{Old Faithful geyser eruption data}

In this section, we consider the Old faithful geyser eruption data which consists of two measurements, duration time and waiting time to the next eruption, taken on $n=272$ eruptions for the Old Faithful geyser in Yellowstone National Park. We analyzed the data using the multivariate Gaussian location-scale mixture model $\sum_{j=1}^kw_j\N(\theta_j, \Sigma_j)$ where $(w_1,\dots, w_k)\in \Delta_k$ are weights, $\theta_1,\dots,\theta_k$ are 2-dimensional vectors and $\Sigma_1,\dots, \Sigma_k$ are $2\times 2$ symmetric positive definite matrices. We first standardized the data and imposed the following MFM prior distribution: $k\sim\Pois(\lambda)$, $(w_1,\dots, w_k)\sim\Dir(1,\dots,1)$, $\theta_j\iidsim\unif([-2,2]^2)$ and $\Sigma_j\iidsim \textsc{wishart}(5, \frac{1}{10}I)$, where $I\in\R^{2\times2}$ denotes the identity matrix. We considered four different values of the mean parameter $\lambda$ of the Poisson distribution: $1,0.1,0.01$ and $0.001$.

\cref{fig:faithful_ncomp} displays the posterior distribution of the number of components by the value of $\lambda$. We see that  the smaller the parameter $\lambda$, the more the posterior distribution concentrates on the value of $2$, which seems to be enough to explain the data, see the scatter plot of the data in \cref{fig:faithful_density}. This result implies that our sample size dependent prior distribution may work even for the multivariate Gaussian location-scale mixture model which is much more complex than the univariate Gaussian location mixture model. \cref{fig:faithful_density} presents the posterior predictive density  by the value of $\lambda$. There is not much difference in four posterior predictive density estimates.

\begin{figure}[ht]
    \centering
    \includegraphics[scale=0.5]{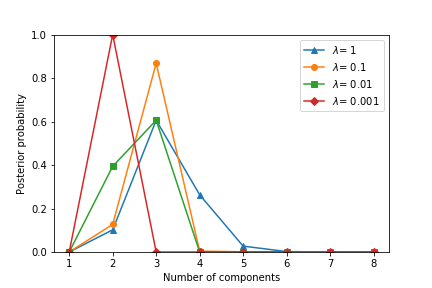}
    \caption{Posterior distribution of the number of components for the old Faithful geyser eruption data}
    \label{fig:faithful_ncomp}
\end{figure}

\begin{figure}[ht]
    \centering   
    \begin{subfigure}[c]{0.45\textwidth}
        \includegraphics[scale=0.45]{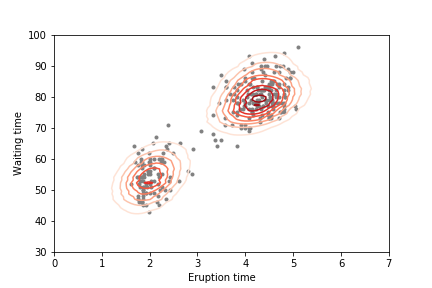}
        \subcaption{$\lambda=1$}
    \end{subfigure}\quad
    \begin{subfigure}[c]{0.45\textwidth}
        \includegraphics[scale=0.45]{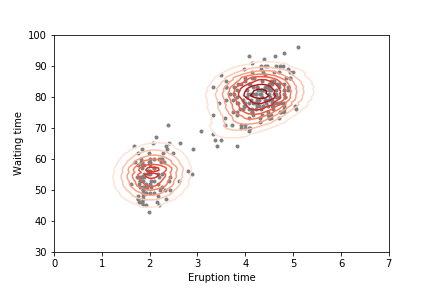}
        \subcaption{$\lambda=0.1$}
    \end{subfigure}
        \begin{subfigure}[c]{0.45\textwidth}
        \includegraphics[scale=0.45]{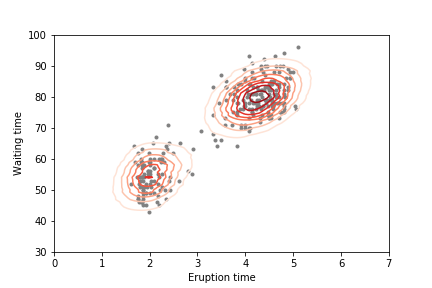}
        \subcaption{$\lambda=0.01$}
    \end{subfigure}\quad
    \begin{subfigure}[c]{0.45\textwidth}
        \includegraphics[scale=0.45]{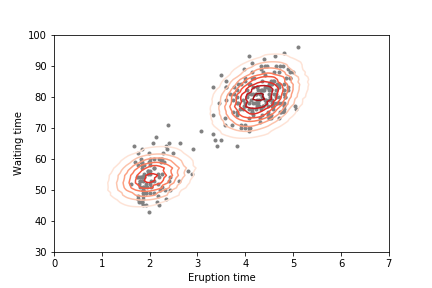}
        \subcaption{$\lambda=0.001$}
    \end{subfigure}
    \caption{Scatterplot of the old Faithful geyser eruption data with contour plot of the posterior predictive density}
    \label{fig:faithful_density}
\end{figure}

\begin{supplement} 
\stitle{Supplement to ``Optimal Bayesian estimation of Gaussian mixtures with growing number of components"}
\slink[doi]{COMPLETED BY THE TYPESETTER}
\sdatatype{.pdf}
\sdescription{The  proofs of all the Theorems    are contained in  Appendix A of \citet{ohnandlin}. Analysis of general mixture models in the framework of \citet{heinrich2018strong} is provided in Appendix B of \citet{ohnandlin}.}
\end{supplement}

\section*{Acknowledgement}

We thank the Editor, the Associate Editor and two reviewers for their valuable comments.  We are in particular grateful to the two reviewers for their careful going over the technical proof of our theorems. We would like to thank Minwoo Chae for very useful comments and discussions.  We acknowledge the  generous support  of  NSF grants DMS CAREER 1654579 and DMS 2113642.

\bibliographystyle{imsart-nameyear}
\bibliography{_references}

\clearpage

\newpage

\setcounter{page}{1}
\renewcommand{\thepage}{S-\arabic{page}}

\crefalias{section}{appendix}

	\begin{center}
		{\Large \textsc{Supplement to "Optimal Bayesian estimation of Gaussian mixtures with growing number of components''}} \\
		\medskip 
		{Ilsang Ohn and Lizhen Lin}
		\medskip
	\end{center}

In this supplement material, we provide proofs of the results in the main text and some additional theoretical results.

\appendix

\section{Proofs}
\label{sec:proof}

We first introduce additional notations. For a real number $x\in\R$, $\floor{x}$ denotes the largest integer less than or equal to $x$ and $\ceil{x}$  the smallest integer larger than or equal to $x$. For a metric space $(\cZ, \rho),$ let $\textsf{diam}(\cZ):=\sup\cbr{\rho(z_1, z_2):z_1, z_2\in\cZ}$.

\subsection{Proofs of \cref{thm:k_upper}}

\begin{proof}[Proof of \cref{thm:k_upper}]
Let $\tilde\zeta_n:=\sqrt{\log n/n}$. We state the following well known result in the Bayesian literature (e.g., Lemma 8.1 of \citeS{ghosal2000convergence}):
    \begin{equation}
        \label{eq:denom_lbound}
                \P_{\nu^\star*\Phi}^{(n)}\del{\int\frac{p_{\nu*\Phi}^{(n)}}{p_{\nu^\star*\Phi}^{(n)}}(X_{1:n}) \Pi(\d\nu)
        \ge \e^{-2n\tilde\zeta_n^2}\Pi\del[0]{\cB_{\kl}(\tilde\zeta_n, \nu^\star*\Phi, \cM)}}
        \ge 1- \frac{1}{n\tilde\zeta_n^2}.
    \end{equation}
Due to the Lipschitz continuity of the standard normal density function and Assumptions \ref{p_a2} and \ref{p_a3}, we can follow the argument in the proof of Proposition 1 of \citeS{scricciolo2017bayesian} to obtain
   \begin{equations}
   \label{eq:prior_mass_lbound}
        \Pi\del{\cB_{\kl}(\tilde\zeta_n, \nu^\star*\Phi, \cM)}
        &\ge  \Pi\del{\cB_{\kl}(\tilde\zeta_n, \nu^\star*\Phi, \cM)|k=k^\star}\Pi(k=k^\star)\\
        &\ge \e^{-c_1k^\star\log (1/\zeta_n)}\Pi(k=k^\star)\\
        &\ge \e^{-\frac{c_1}{2}k^\star\log n}\Pi(k=k^\star)
    \end{equations}
for some constant $c_1>0$. 
Therefore,
        \begin{align*}
        \P_{\nu^\star*\Phi}^{(n)}\sbr{\Pi(\nu\notin\cM_{k^\star}|X_{1:n})}
        &=\P_{\nu^\star*\Phi}^{(n)}\sbr{\frac{\int_{\nu\notin\cM_{k^\star}} p_{\nu*\Phi}^{(n)}(X_{1:n})/ p_{\nu^\star*\Phi}^{(n)}(X_{1:n}) \Pi(\d\nu)}{\int p_{\nu*\Phi}^{(n)}(X_{1:n})/ p_{\nu^\star*\Phi}^{(n)}(X_{1:n})\Pi(\d\nu)}}\\
        &\lesssim \frac{\Pi(k>k^\star)}{\e^{-2n\tilde\zeta_n^2}\Pi\del[0]{\cB_{\kl}(\tilde\zeta_n, \nu^\star*\Phi, \cM)}} + \frac{1}{n\tilde\zeta_n^2} \\
        &\lesssim \e^{(2+c_1/2) k^\star\log n}\frac{\Pi(k> k^\star)}{\Pi(k=k^\star)} + \frac{1}{\log n}\\
        &\lesssim \e^{(2+c_1/2) k^\star\log n}\e^{-A\bar{k}_n\log n} + \frac{1}{\log n},
    \end{align*}
where we use Fubini's theorem for the first inequality. Hence if $A>c_1/2+2$, the desired result follows.
\end{proof}

\subsection{Proofs of \cref{thm:mixing}}
\label{sec:proof:mixing}

\subsubsection{Construction of test functions}


In this subsection, we establish the existence of test functions based on the moment estimator defined as follows. Given  $n$ i.i.d. observations $X_{1:n}$ and arbitrary $k\in\bN$ and $\eta\in(0,1)$, we first divide the sample $X_{1:n}$ to $N:=\floor{\log (2k/\eta)}\wedge n$ almost equal sized batches, say $\cX_1,\dots, \cX_N$, where each batch has $\floor{n/N}$ or $\floor{n/N}+1$ samples. Then we define for each $h\in\bN$,
    \begin{align}
     \label{eq:median_moment_estimator}
        \hat{m}^{(k,\eta)}_h:=\hat{m}^{(k,\eta)}_h(X_{1:n}):=\textsf{Median}\del{\cbr{M_{\cX_l,h}:l\in[N]}},   
    \end{align}
where we define
    \begin{equations}
      \label{eq:Gauss_moment_estimator}
    M_{\cX_l,h}:=\frac{1}{|\cX_{l}|}&\sum_{X\in\cX_{l}}H_h(X) \\
        &\mbox{ with } H_h(X):=h!\sum_{r=0}^{\floor{h/2}}\frac{(-1/2)^r}{r!(h-2r)!}X^{h-2r}
    \end{equations}
for each $l\in[N]$. Note that $H_h(x)$ is equal to the $h$-th order Hermite polynomial. For convenience, we introduce the notation $\hat{m}^{(k,\eta)}_{1:H}:=(\hat{m}^{(k,\eta)}_h)_{h\in[H]}$ for $H\in\bN$. 

In the next lemma, we derive the expectation and the upper bound of the variance of $M_{\cX_l,h}$ . 

\begin{lemma}
\label{lem:moment_var}
Suppose that each random variable in $\cX_l$ are independently generated from the distribution $\mu*\Phi$ where $\mu\in\cP([-L,L])$. Then for any $h\in\bN$, we have
    \begin{align}
    \label{eq:moment_mean}
         \E\del{M_{\cX_l,h}}=m_h(\mu)
    \end{align}
and
    \begin{align}
    \label{eq:moment_var}
         \var\del{M_{\cX_l,h}}\le \frac{1}{|\cX_{l}|} \del{c_1(L+\sqrt{h})}^{2h}.
    \end{align}
for some universal constant $c_1>0$.
\end{lemma}

\begin{proof}
 By the well known property of the Hermite polynomials that $\P_{\N(\theta,1)}H_h=\theta^h$ for any  $h\in\bN$,  we have $\P_{\mu*\Phi}H_h=\int \theta^h\mu(\d\theta)=m_{h}(\mu)$ and so \labelcref{eq:moment_mean} immediately follows. \labelcref{eq:moment_var} is Lemma 5 of \citeS{wu2018optimal}.
\end{proof}

From the above lemma, we can establish the exponential tail bound of $\hat{m}^{(k,\eta)}_{1:(2k-1)}.$   

\begin{lemma}
\label{lem:moment_tail}
Suppose that $X_1,\dots, X_n\iidsim \mu*\Phi$ where $\mu\in\cP([-L,L])$. Then for any $k\in\bN$ and $\epsilon>0$,  there is a constant $c_1>0$ depending only on $L$ such that
    \begin{align*}
        \P_{\mu*\Phi}^{(n)}& \del{\norm[0]{\hat{m}^{(k,\eta_\epsilon)}_{1:(2k-1)}-m_{1:(2k-1)}(\mu)}_\infty\ge \epsilon} 
        \le (2\e^{\frac{1}{8}})k\exp\del{-\frac{1}{8}n\cbr{(c_1 k)^{-2k+1}\epsilon^2\wedge1 }},
    \end{align*}
where  $\hat{m}^{(k,\eta_\epsilon)}_{h}$, $h\in[2k-1]$ are defined as \labelcref{eq:median_moment_estimator} with 
     \begin{equation*}
        \eta=\eta_\epsilon:=(2k)\exp\del{-(c_1k)^{-2k+1}n\epsilon^2}.
    \end{equation*}
\end{lemma}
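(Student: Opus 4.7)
The plan is to follow the classical median-of-means paradigm: show that each single-batch statistic $M_{l,h}^{(\eta)}$ is an unbiased estimator of $m_h(\mu)$ with variance decaying like $(c_0 k)^{2k-1}/|\cX_l|$, then use the median over $N$ batches to convert a polynomially-small Chebyshev bound into an exponentially-small one, and finally read off the choice of $N$ from the prescribed $\eta_\epsilon$.

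First, I would identify the denoised moment formula as evaluation of the probabilist's Hermite polynomial: $h!\sum_{a=0}^{\lfloor h/2\rfloor}(-1/2)^a/(a!(h-2a)!)\,x^{h-2a}=\mathrm{He}_h(x)$. Because $\E[\mathrm{He}_h(\theta+Z)]=\theta^h$ for $Z\sim\cN(0,1)$, conditioning on the mixture component gives $\E[M_{l,h}^{(\eta)}]=m_h(\mu)$ exactly. For the variance, using standard pointwise bounds on Hermite polynomials (of the form $\mathrm{He}_h(x)^2\le C\,h!\,(|x|\vee 1)^{2h}$) together with the sub-Gaussian tails of $X$ and the bound $\supp(\mu)\subset[-L,L]$, one obtains $\E[\mathrm{He}_h(X)^2]\le (c_0 k)^{2k-1}$ uniformly in $h\in[2k-1]$, for a constant $c_0$ depending only on $L$. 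Hence $\var(M_{l,h}^{(\eta)})\le (c_0 k)^{2k-1}/|\cX_l|$.

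Next, Chebyshev yields $\P(|M_{l,h}^{(\eta)}-m_h(\mu)|\ge\epsilon)\le (c_0 k)^{2k-1}/(|\cX_l|\epsilon^2)$. Choosing $c_1$ to be a sufficiently large multiple of $c_0$ ensures that whenever $N\le n\epsilon^2/(c_1 k)^{2k-1}$, the batch size $|\cX_l|\ge n/N-1$ is large enough to make this per-batch failure probability at most $1/4$. Applying Hoeffding's inequality to the Bernoulli indicators $Y_l:=\ind(|M_{l,h}^{(\eta)}-m_h(\mu)|\ge\epsilon)$ then gives $\P(\sum_{l=1}^N Y_l\ge N/2)\le\exp(-N/8)$, which bounds the probability that the median of the $N$ batch statistics deviates from $m_h(\mu)$ by more than $\epsilon$.

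Finally, the prescribed choice $\eta_\epsilon=2k\exp(-(c_1 k)^{-2k+1}n\epsilon^2)$ is designed so that $N=\lfloor\log(2k/\eta_\epsilon)\rfloor\wedge n=\lfloor(c_1 k)^{-2k+1}n\epsilon^2\rfloor\wedge n$, which simultaneously meets the batch-size requirement above and produces, after absorbing the floor into a factor of $e^{1/8}$, the bound $e^{-N/8}\le e^{1/8}\exp(-\tfrac{1}{8}n\{(c_1 k)^{-2k+1}\epsilon^2\wedge 1\})$. A union bound over the $2k-1$ coordinates $h\in[2k-1]$ supplies the leading factor $\le 2k$, giving the statement. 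The delicate piece is the variance bound in the first step: tracking the explicit $(c_0 k)^{2k-1}$ scaling requires careful handling of the Hermite polynomial coefficients and of the Gaussian moments of $X$, and this is the only place where the constant $c_1$ actually depends on $L$; everything else—Chebyshev, Hoeffding on a Bernoulli sum, and the arithmetic matching $c_1$ to $c_0$—is routine.
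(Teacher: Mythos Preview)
Your proposal is correct and follows essentially the same median-of-means strategy as the paper: a per-batch variance bound of the form $(c_0k)^{2k-1}/|\cX_l|$ (the paper imports this directly from Lemma~5 of \citet{wu2018optimal} rather than rederiving it via Hermite-polynomial estimates), Chebyshev to make the single-batch failure probability at most $1/4$, Hoeffding on the Bernoulli indicators to get $\e^{-N/8}$, a union bound over $h\in[2k-1]$, and finally reading off $N$ from the prescribed $\eta_\epsilon$. The only cosmetic difference is that the paper works with the complementary indicators $Z_{l,h}=1-Y_l$ and phrases the Chebyshev step as $\P(Z_{l,h}=1)\ge 3/4$.
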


\begin{proof}
We divide the sample to an equal sized partition $\cX_1,\dots, \cX_N$ with $N:=\floor{\log (2k/\eta_\epsilon)}\wedge n$ where $\eta_\epsilon$, which is the constant depending only on $\epsilon$, will be specified later.  Let $n_l:=|\cX_l|$ then $n_l$ is either $\floor{n/N}$ or $\floor{n/N}+1$. By \cref{lem:moment_var}, the variance of $M_{\cX_l,h}:=|\cX_{l}|^{-1}\sum_{X\in\cX_{l}}H_h(X)$ is bounded by
    \begin{equation*}
        \var(M_{\cX_l,h})\le \frac{1}{n_l}(c_0(L+\sqrt{h}))^{2h},
    \end{equation*}
for any $l\in[N]$ and any $h\in[2k-1]$  for some constant $c_0>0$. Let $C_{L,h}:=c_0(L+\sqrt{h})$ for simplicity. Then by the Chebyshev inequality with \labelcref{eq:moment_mean}, the expectation of the random variable defined as
    \begin{equation*}
        Z_{l,h}:=\ind\del{\abs{M_{\cX_l,h}-m_{h}(\mu)}< \sqrt{\frac{4}{3n_l}}\del{C_{L,h}}^{h}}
    \end{equation*}
is bounded below by
    \begin{equation}
    \label{eq:p_lh_lower}
        P_{l,h}:=\P_{\nu*\Phi}^{(n_l)} Z_{l,h}\ge \frac{3}{4},
    \end{equation}
for any $l\in[N]$ and any $h\in[2k-1]$. Now we use the well-known median trick. By definition of median and the fact that $n_l\ge \floor{n/N}\ge n/(2N)$, the moment estimator defined in \labelcref{eq:median_moment_estimator} satisfies
    \begin{equation}
    \label{eq:median_bound}
        \ind\del{\norm{\hat{m}_{h}^{(k,\eta_\epsilon)}-m_{h}(\mu)}_\infty \ge \sqrt{\frac{8N}{3n}}\del{C_{L,h}}^{h} }
        \le \ind\del{\sum_{l=1}^NZ_{l,h}\le \frac{N}{2}}
    \end{equation}
for any $h\in[2k-1]$.
By Hoeffding's inequality, the probability of the right-hand side of the preceding display is bounded as
    \begin{equation}
    \label{eq:z_lh_bound}
    \P_{\mu*\Phi}^{(n)}\del{\sum_{l=1}^NZ_{l,h}\le \frac{N}{2}}
    \le   \P_{\mu*\Phi}^{(n)}\del{\abs{\sum_{l=1}^NZ_{l,h}-\sum_{l=1}^N P_{l,h}}\ge \frac{N}{4}}
    \le \e^{-N/8},
    \end{equation}
where the first inequality is due to \labelcref{eq:p_lh_lower}. Since there is an universal constant $c_1>0$ depending only on $L$ such that $\sqrt{8/3}\del[0]{C_{L,h}}^{h}\le \sqrt{c_1k}^{2k-1}$ for any $h\in[2k-1]$, we obtain that by \labelcref{eq:median_bound}, \labelcref{eq:z_lh_bound} and the union bound,
    \begin{equation*}
        \P_{\mu*\Phi}^{(n)}\del{\norm{\hat{m}_{1:(2k-1)}^{(k,\eta_\epsilon)}-m_{1:(2k-1)}(\mu)}_\infty \ge\sqrt{\frac{N}{n}} \del{\sqrt{c_1k}}^{2k-1} }
        \le (2k)\e^{-N/8}.
    \end{equation*}
Let $\eta_\epsilon:=(2k)\exp(-(c_1k)^{2k-1}n\epsilon^2)$, then $N:=\floor{\log(2k/\eta_\epsilon)}\wedge n\le(c_1k)^{2k-1}n\epsilon^2$ and so $\epsilon \ge \sqrt{N/n}(c_1k)^{2k-1}$. Also, $N\ge \cbr{((c_1k)^{-2k+1}n\epsilon^2 )\wedge n}-1$, which completes the proof
\end{proof}
We now construct the test function for testing the true versus ``small ball'' alternatives with respect to the $\cL_\infty$ distance of moment vectors. The proof is deferred to  \cref{sec:proof:lemmas}.

\begin{lemma}
\label{lem:moment_test}
Suppose that $X_1,\dots, X_n\iidsim \mu^\star*\Phi$ where $\mu^\star\in\cP([-L,L])$. Let $\epsilon>0$, $k\in\bN$  let
    \begin{equation*}
        \cU_{\mu^\star,2\epsilon}:=\cbr{\mu\in\cP([-L,L]):\|m_{1:(2k-1)}(\mu)-m_{1:(2k-1)}(\mu^\star)\|_\infty>2\epsilon}.
    \end{equation*}
Then there is a test function $\tilde\psi:\R^n\mapsto[0,1]$ such that
    \begin{align*}
        \P^{(n)}_{\mu^\star*\Phi}\tilde\psi(X_{1:n})\vee\sup_{\mu\in\cU_{\mu^\star,2\epsilon}}\P^{(n)}_{\mu*\Phi}\sbr{1-\tilde\psi(X_{1:n})} 
        \le (2\e^{\frac{1}{8}})k\exp\del{-\frac{1}{8}n\cbr{(c_1 k)^{-2k+1}\epsilon^2\wedge1}}
    \end{align*}
for some universal constant $c_1>0$.
\end{lemma}

\begin{proof}
Let $\tilde\psi:\R^n\mapsto [0,1]$ be the function given by
     \begin{equation*}
        \tilde\psi(X_{1:n}):=\ind\del{\|\hat{m}^{(k,\eta_{n,t})}_{1:(2k-1)}-m_{1:(2k-1)}(\mu^\star)\|_\infty\ge \epsilon},  
    \end{equation*}
where $\hat{m}^{(k,\eta_{\epsilon})}_{h}$, $h\in[2k-1]$ is the moment estimator defined in  \labelcref{eq:median_moment_estimator} with
    \begin{align*}
        \eta_{\epsilon}:=(2k)\exp\del{-(c_1k)^{-2k+1}n\epsilon^2}.
    \end{align*}
for some constant $c_1>0$. Here, by \cref{lem:moment_tail}, the  constant $c_1>0$ can be chosen so that
    \begin{align*}
        \P_{\mu^\star*\Phi}^{(n)}  \tilde\psi(X_{1:n})
        \le (2\e^{\frac{1}{8}})k\exp\del{-\frac{1}{8}n\cbr{(c_1 k^\star)^{-2k+1}(2\epsilon)^2\wedge1 }}.
    \end{align*}
We just showed the exponential type \rom{1} error bound for the test function $\tilde\psi$. On the other hand, by triangle inequality, for any $\mu\in  \cU_{\mu^\star,2\epsilon}$
    \begin{align*}
        &\|\hat{m}^{(k,\eta_{\epsilon})}_{1:(2k-1)}-m_{1:(2k-1)}(\mu^\star)\|_\infty \\
        &\ge\|m_{1:(2k-1)}(\mu)-m_{1:(2k-1)}(\mu^\star)\|_\infty
         -\|\hat{m}^{(k,\eta_{\epsilon})}_{1:(2k-1)}-m_{1:(2k-1)}(\mu)\|_\infty\\
        &\ge 2\epsilon-\|\hat{m}^{(k,\eta_{\epsilon})}_{1:(2k-1)}-m_{1:(2k-1)}(\mu)\|_\infty.
    \end{align*}
Thus, the type \rom{2} error probability is bounded exponentially as
    \begin{align*}
        \sup_{\mu\in \cU_{\mu^\star,2\epsilon}}\P_{\mu*\Phi}^{(n)}\del{1-\tilde\psi(X_{1:n})}
        &=   \sup_{\mu\in \cU_{\mu^\star,2\epsilon}}\P_{\mu*\Phi}^{(n)}\del{\|\hat{m}^{(k,\eta_{\epsilon})}_{1:(2k-1)}-m_{1:(2k-1)}(\mu^\star)\|_\infty<\epsilon}  \\
        &\le    \sup_{\mu\in \cU_{\mu^\star,2\epsilon}}\P_{\mu*\Phi}^{(n)} \del{2\epsilon-\|\hat{m}_{1:(2k-1)}^{(k,\eta_{\epsilon})}-m_{1:(2k-1)}(\mu)\|_\infty<\epsilon}  \\
        &\le    \sup_{\mu\in \cU_{\mu^\star,2\epsilon}}\P_{\mu*\Phi}^{(n)}\del{\|\hat{m}^{(k,\eta_{\epsilon})}_{1:(2k-1)}-m_{1:(2k-1)}(\mu)\|_\infty> \epsilon} \\
        &\le (2\e^{\frac{1}{8}})k\exp\del{-\frac{1}{8}n\cbr{(c_1 k^\star)^{-2k^\star+1}\epsilon^2\wedge1 }}.
    \end{align*}
This completes the proof.
\end{proof}

\subsubsection{Proof of \cref{thm:mixing}}

For the proof of  \cref{thm:mixing}, we use the following moment comparison lemma to translate the mixing distribution estimation problem to the the moment vector estimation problem.

\begin{lemma}[Proposition 1 of \citetS{wu2018optimal}]
\label{lem:moment_comp}
Suppose that $\nu_1, \nu_2\in\cM_{k}([-L,L])$ for $L>0$. Let
    \begin{equation*}
        \zeta:=\norm[1]{m_{1:(2k-1)}(\nu_1)-m_{1:(2k-1)}(\nu_2)}_\infty.
    \end{equation*}
Then
    \begin{equation}
        \sW_1(\nu_1,\nu_2)\le c_1k\zeta^{\frac{1}{2k-1}}
    \end{equation}
for some constant $c_1>0$ depending only on $L$.
\end{lemma}

We are ready to give the proof.

\begin{proof}[Proof of \cref{thm:mixing}]
Fix $\nu^\star\in\cM_{k^\star}$. Let $\zeta_n:=\sqrt{\log^2 n/n}$. 
Since $\P_{\nu^\star*\Phi}^{(n)}\sbr{\Pi(\nu\in\cM_{k^\star}|X_{1:n})}\to1$ by \cref{thm:k_upper}, the proof is done if we prove that $\P_{\nu^\star*\Phi}^{(n)}\sbr[0]{\Pi(\tilde{\cU}|X_{1:n})}=o(1)$, where
    \begin{align*}
        \tilde{\cU}&:=\cbr{\nu\in\cM:\sW_1(\nu,\nu^\star)\ge M\bar\epsilon_{n,k^\star}}\bigcap\cbr{\nu\in\cM_{k^\star}}\\
        &=\cbr{\nu\in\cM_{k^\star}:\sW_1(\nu,\nu^\star)\ge M\bar\epsilon_{n,k^\star}}.
    \end{align*}
For notational simplicity, we  suppress the subscript $1$:$(2k^\star-1)$ of the moment vector and its estimator to write $m(\cdot):=m_{1:(2k^\star-1)}(\cdot)$ and $\hat{m}^{(k^\star,\eta)}:=\hat{m}^{(k^\star,\eta)}_{1:(2k^\star-1)}$. We use the notation
    \begin{equation*}
        \rho(\nu_1, \nu_2):=\norm[1]{m_{1:(2k^\star-1)}(\nu_1)-m_{1:(2k^\star-1)}(\nu_2)}_\infty
    \end{equation*}
for $\nu_1,\nu_2\in\cM.$ Let 
    \begin{equation*}
        \cU:=\cbr{\nu\in\cM_k:\rho(\nu,\nu^\star)\ge (\sqrt{M_0k^\star})^{2k^\star-1}\zeta_n},
    \end{equation*}
with $M_0>1$ being the constant specified later. By \cref{lem:moment_comp},
we have $\tilde{\cU}\subset\cU$ if we take $M$ so that $M\ge c_1\sqrt{M_0}$ for some constant $c_1>0$.

It remains to bound the posterior probability of $\cU$, which can be bounded as
    \begin{align*}
        \P_{\nu^\star*\Phi}^{(n)}\sbr{\Pi\del{\cU|X_{1:n}}}
        &\le \P_{\nu^\star*\Phi}^{(n)}\tilde\psi(X_{1:n}) + \P_{\nu^\star*\Phi}^{(n)}\sbr{(1-\tilde\psi(X_{1:n}))\Pi\del{\cU|X_{1:n}}\ind_{\tilde\fA}} + \P_{\nu^\star*\Phi}^{(n)}(\tilde\fA^c)
    \end{align*}
for any test function $\tilde\psi:\R^n\mapsto [0,1]$ and measurable event $\tilde\fA\subset \R^n$. We first note that, by \labelcref{eq:denom_lbound} and \labelcref{eq:prior_mass_lbound} in the proof of \cref{thm:k_upper}, there exists a constant $c_2>0$ such that the event defined as 
    \begin{align*}
        \fA_n:=\cbr{X_{1:n}\in\R^n:\int \frac{p_{\nu*\Phi}^{(n)}}{p_{\nu^\star*\Phi}^{(n)}}(X_{1:n}) \Pi(\d\nu)
        \ge \e^{-(c_2+A)\bar{k}_n \log n}}
    \end{align*}
satisfies
    \begin{equation}
    \label{eq:prior_mass}
        \P_{\nu^\star*\Phi}^{(n)}(\fA_n)\ge 1-\frac{1}{\log n}.
    \end{equation}
On the other hand, by \cref{lem:moment_test}, there is a test function $\psi_n:\R^n\mapsto [0,1]$ such that
    \begin{align*}
        &\P^{(n)}_{\nu^\star*\Phi}\psi_n(X_{1:n})\vee\sup_{\nu\in\cU}\P^{(n)}_{\nu*\Phi}\sbr{1-\psi_n(X_{1:n})}\\
        &\le (2\e^{\frac{1}{8}})\exp\del{-\frac{1}{8}n\cbr{(c_3 k^\star)^{-2k^\star+1} (M_0k^\star)^{2k^\star-1}\del{\zeta_n/2}^2\wedge1}}
    \end{align*}
for some constant $c_3>0$. Since $k^\star\lesssim \log n\lesssim \exp(\log n)$, the preceding display is further bounded as 
    \begin{align}
    \label{eq:type1}
        \P^{(n)}_{\nu^\star*\Phi}\psi_n(X_{1:n})\vee\sup_{\nu\in\cU}\P^{(n)}_{\nu*\Phi}\sbr{1-\psi_n(X_{1:n})}
        \lesssim\exp\del{\log n-A(M_{0}, k^\star,\zeta_n)},
    \end{align}
where we denote
    \begin{align*}
        A(M_{0}, k^\star, \zeta_n)&:=
        \frac{1}{8}n\sbr{\cbr{(M_0/c_3)^{2k^\star-1}(\zeta_n/2)^2}\wedge1 }
    \end{align*}
for notational simplicity. This implies that
    \begin{equations}
    \label{eq:type2_on_A}
    &\P_{\nu^\star*\Phi}^{(n)}\sbr{(1-\psi_n(X_{1:n}))\Pi\del{\cU|X_{1:n}}\ind_{\fA_n}}\\
    &\le \e^{(c_2+A)\bar{k}_n\log n}\P_{\nu^\star*\Phi}^{(n)}\sbr{(1-\psi_n(X_{1:n}))\int_{\cU}\frac{p_{\nu*\Phi}^{(n)}}{p_{\nu^\star*\Phi}^{(n)}}(X_{1:n}) \Pi(\d\nu)}\\
     &= \e^{(c_2+A)\bar{k}_n\log n}\int_{\cU}(1-\psi_n(x_{1:n}))p_{\nu*\Phi}^{(n)}(x_{1:n})\lambda(\d x_{1:n}) \Pi(\d\nu)\\
     &\le \e^{(c_2+A)\bar{k}_n\log n}\sup_{\nu\in \cU}\P_{\nu*\Phi}^{(n)}(1- \psi_n(X_{1:n}))\\
     &\le \e^{c_4(c_2+A)\log^2n-A(M_{0}, k^\star, \zeta_n)}
    \end{equations}
for some constant $c_4>0$, where the equality follows from Fubini's theorem.

Consequently, by  \labelcref{eq:prior_mass}, \labelcref{eq:type1}, and \labelcref{eq:type2_on_A}, we obtain
    \begin{align*}
        \P_{\nu^\star*\Phi}^{(n)}\sbr{\Pi\del{\cU|X_{1:n}}}
        &\le \P_{\nu^\star*\Phi}^{(n)}\psi_n(X_{1:n}) + \P_{\nu^\star*\Phi}^{(n)}\sbr{(1-\psi_n(X_{1:n}))\Pi\del{\cU|X_{1:n}}\ind_{\fA_n}} + \P_{\nu^\star*\Phi}^{(n)}(\fA_n^c)\\
        &\lesssim \e^{(1+c_4(c_2+A))\log^2n-A(M_{0}, k^\star, \zeta_n)}+ \log ^{-1}n.
    \end{align*}
Note that for any $M_0$ such that $M_0>c_3$, we have
    \begin{align*}
        A(M_{0}, k^\star, \zeta_n)&\ge \frac{1}{8} n\cbr{\del{\frac{M_0}{c_3}\frac{\log^2n}{4n}}\wedge1 }\\
        &\ge c_5 M_0\log^2n
    \end{align*}
for some constant $c_5>0$, where the second inequality is due to that $\log^2 n/n=o(1)$. Hence the posterior probability of $\cU$  goes to zero if we choose $M_{0}$ such that $M_{0}>\max\{(1+c_4(c_2+A))/c_5,c_3,1\}$.
\end{proof}

\subsection{Proof of  \cref{thm:mixing_adap}}

\begin{proof}[Proof of \cref{thm:mixing_adap}]
To avoid confusion, we denote by $\bar{M}$ instead of $M$ the sufficiently large constant appearing in \labelcref{eq:mixing_conv} and we let $M$ be the constant appearing in \labelcref{eq:mixing_conv_adap}. If $M\epsilon_{n,k^\star,k_0,\gamma}\ge \bar{M}\bar\epsilon_{n,k^\star}$, the result follows trivially from \cref{thm:mixing}, so we assume throughout that  $M\epsilon_{n,k^\star,k_0,\gamma}< \bar{M}\bar\epsilon_{n,k^\star}$.

Fix $\nu^\star\in\cM_{k^\star, k_0, \gamma,\omega}$. Since
    \begin{align*}
        \P_{\nu^\star*\Phi}^{(n)}\sbr{\Pi(\nu\notin\cM_{k^\star}|X_{1:n})}=o(1) & \mbox{ by \cref{thm:k_upper}}\\
        \P_{\nu^\star*\Phi}^{(n)}\sbr{\Pi(\sW_1(\nu, \nu^\star)\ge \bar{M}\bar{\epsilon}_{n,k^\star}|X_{1:n})}=o(1) & \mbox{ by \cref{thm:mixing}}, 
    \end{align*}
we will be done with the proof if we can show that 
    \begin{equation*}
        \P_{\nu^\star*\Phi}^{(n)}\sbr{\Pi\del{\cbr{\nu\in\cM_{k^\star}:\bar{M}\bar{\epsilon}_{n,k^\star} >\sW_1(\nu, \nu^\star)\ge M\epsilon_{n,k^\star,k_0,\gamma}}|X_{1:n}}}=o(1).
    \end{equation*}
Let $\nu:=\sum_{j=1}^{k^\star}w_j\delta_{\theta_j}$ be the mixing distribution satisfying $W_1(\nu, \nu^\star)\le \bar{M}\bar\epsilon_{n,k^\star}$ for the true mixing distribution $\nu^\star:=\sum_{j=1}^{k^\star}w_j^\star\delta_{\theta_j^\star}$. Since $\nu^\star$ is $k_0$ $(\gamma, \omega)$-separated, there is a partition $(S_l:l\in[k_0])$ of $[k^\star]$ such that $\abs[0]{\theta_j-\theta_{j'}}\ge \gamma$ for any $j\in S_l$, $j'\in S_{l'}$ and any $l,l'\in[k_0]$ with $l\neq l'$ and $\sum_{j\in S_l}w_j\ge \omega$ for any $l\in[k_0]$. For each $h\in[k^\star]$, let $j_h^*=\argmin_{j\in[k^\star]}|\theta_j-\theta_h^\star|$. Note that for any $l\in[k_0]$,
    \begin{align*}
       \sW_1(\nu, \nu^\star)\ge \sum_{h\in S_l}w_h^\star|\theta_{j_h^*}-\theta_h^\star|\ge \omega\min_{h\in S_l}|\theta_{j_h^*}-\theta_h^\star|.
    \end{align*}
We now suppose that the assumption $\gamma\omega> M'\bar\epsilon_{n,k^\star}$ holds with $M':=\frac{1}{c}\bar{M}$ for some constant $c$ less than $1/2$. Then
    \begin{align*}
        \min_{h\in S_l}|\theta_{j_h^*}-\theta_h^\star|\le \sW_1(\nu, \nu^\star)/\omega\le  \bar{M}\bar\epsilon_{n,k^\star}/\omega\le c\gamma.
    \end{align*}
That is, for any $l\in[k_0]$, there is  $h\in S_l$ such that $\theta_h^\star$ is close to some atom of $\nu$ within distance $\gamma/c$. Hence the mixing distribution $\nu$ is $k_0$ $((1-2c)\gamma, 0)$ separated. Let $S:=\cbr{\theta_j:j\in[k^\star]}\cup\cbr{\theta_j^\star:j\in[k^\star]}$. Then each element in $S$ is $(1-2c)\gamma$ away from at least $2(k_0-1)$ elements in $S$. Therefore we can invoke Proposition 4 of \citetS{wu2018optimal} which states that if $\nu_1$ and $\nu_2$ are supported on a set of $r$ atoms in $[-L,L]$ and each atom is at least $\tilde\gamma$ away from all but at most $r'$ atoms, then
    \begin{equation*}
     \sW_1(\nu_1,\nu_2)\le c_1r\del{\frac{r4^{r-1}}{\tilde\gamma^{r-r'-1}}\norm{m_{1:(r-1)}(\nu_1)-m_{1:(r-1)}(\nu_2)}_\infty}^{\frac{1}{r'}},
    \end{equation*}
for some constant $c_1>0$ depending only on $L$. By the preceding disply with $r=2k^\star$, $r'=2k^\star-1-2(k_0-1)=2(k^\star-k_0)+1$  and $\tilde\gamma=(1-2c)\gamma$, we have for sufficiently large $M>0$
    \begin{align*}
        &\cbr{\nu\in\cM_{k^\star}:\bar{M}\bar{\epsilon}_{n,k^\star} > W_1(\nu, \nu^\star)\ge M\epsilon_{n,k^\star,k_0,\gamma}}\\
        &\subset\cbr{\nu\in\cM_{k^\star}:\norm[0]{m_{1:(2k^\star-1)}(\nu)-m_{1:(2k^\star-1)}(\nu^\star)}_\infty\ge (\sqrt{M_0k^\star})^{2k^\star-1}\zeta_n},
    \end{align*}
with $M_0>1$ being sufficiently large, and $\zeta_n:=\sqrt{\log n^2/n}$. The only remaining part of the proof is to bound the posterior probability of the right-hand side of the preceding display, and this is shown in the proof of \cref{thm:mixing}.
\end{proof}

\subsection{Proof of  \cref{prop:local_sep}}
 
\begin{proof}[Proof of \cref{prop:local_sep}]
We set $\gamma:=\gamma(\nu_0)$ and $\omega:=\omega(\nu_0)$ for short. Suppose that $\nu:=\sum_{j=1}^{k}w_j\delta_{\theta_j}\in\cM_{k}$ satisfies $\sW_1(\nu, \nu_0)<c\gamma\omega$. Since $\nu_0$ is $k_0$ $(\gamma, \omega)$-separated, by the similar argument in the proof of \cref{thm:mixing_adap}, we have that for every  $h\in[k]$, 
    \begin{align*}
        |\theta_{j_h^*}-\theta_{0h}|\le \sW_1(\nu, \nu_0)/\omega\le  c\gamma,
    \end{align*}
where we define $j_h^*=\argmin_{j\in[k]}|\theta_j-\theta_h^\star|$. Thus, $\nu$ is $k_0$ $((1-2c)\gamma, 0)$ separated. Moreover, since $|\theta_{j_h^*}-\theta_{0l}|\ge |\theta_{0h}-\theta_{0l}|-|\theta_{j_h^*}-\theta_{0h}|\ge(1-c)\gamma>c\gamma$ for any $l\neq h,$ the indices $j_{1}^*, \dots, j_{k_0}^*$ are distinct. Thus there is a partition $S_1,\dots,S_{k_0}$ of $[k]$ such that  $|\theta_j-\theta_{j'}|\ge(1-2c)\gamma$ for any $j\in S_h$, $j'\in S_{h'}$ and any $h,h'\in[k_0]$ with $h\neq h'$ and $j_h^*\in S_h$ for any $h\in[k_0]$. Let $(p^*_{jh})_{j\in[k],h\in[k_0]}\in\cQ((w_j)_{j\in[k]}, (w_{0j})_{j\in[k_0]})$ be the optimal coupling such that $\sW_1(\nu,\nu_0)=\sum_{j=1}^k\sum_{h=1}^{k_0}p_{jh}|\theta_j-\theta_{0h}|$. Then for any $h\in[k_0]$, we have
    \begin{align*}
        c\gamma\omega>\sW_1(\nu,\nu_0) &\ge \sum_{j=1}^kp^*_{jh}|\theta_j-\theta_{0h}|\\
        &= \sum_{j\in S_h}p^*_{jh}|\theta_j-\theta_{0h}| + \sum_{j \notin S_h}p^*_{jh}|\theta_j-\theta_{0h}|\\
        &\ge 0+ \del{w_{0h}-\sum_{j\in S_h}p^*_{jh}}(1-3c)\gamma,
    \end{align*}
where the last inequality follows from that $|\theta_j-\theta_{0h}|\ge |\theta_j-\theta_{j_h^*}|- |\theta_{j_h^*}-\theta_{0h}|\ge (1-2c)\gamma-c\gamma$ for any $j \notin S_h.$
Hence,
    \begin{align*}
        \sum_{j\in S_h}w_{j}\ge \sum_{j\in S_h}p^*_{jh}\ge w_{0h}-\frac{c}{1-3c}\omega\ge \frac{1-4c}{1-3c}\omega,
    \end{align*}
which completes the proof.
\end{proof}

\subsection{Proof of \cref{thm:k_under}}

\begin{proof}[Proof of \cref{thm:k_under}]
Assume that $\nu:=\sum_{j=1}^kw_j\delta_{\theta_j}\in\cM_{k}$ with $k<k^\star$. Then there exists an index  $h^*\in[k^\star]$ such that
    \begin{equation*}
        |\theta_j-\theta_{h^*}^\star|\ge \min_{h\in[k^\star]:h\neq j^*}|\theta_j-\theta_h^\star|
    \end{equation*}
for any $j\in[k]$, which implies that
    \begin{align*}
        2|\theta_j-\theta_{h^*}^\star| 
        &\ge |\theta_j-\theta_{h^*}^\star| +  \min_{h\in[k^\star]:h\neq j^*}|\theta_j-\theta_h^\star|\\
        &\ge \min_{h,l\in[k^\star]:h\neq l}|\theta_l^\star-\theta_h^\star|.
    \end{align*}
Therefore, for the optimal coupling $(p^*_{jh})_{j\in[k], h\in[k^\star]}\in\cQ((w_j)_{j\in[k]},(w_j^\star)_{j\in[k^\star] })$, we have
    \begin{align*}
        W_1(\nu,\nu^\star)&=\sum_{j=1}^k\sum_{h=1}^{k^\star}p^*_{jh}|\theta_j-\theta_h^\star|\\
        &\ge \sum_{j=1}^kp^*_{jh^*}|\theta_j-\theta_{h^*}^\star|\\
        &\ge \frac{1}{2}w^\star_{h^*}|\theta_l^\star-\theta_h^\star|\ge \frac{\gamma\omega}{2}.
    \end{align*}
Since $\gamma\omega > M'\epsilon_{n,k^\star,k_0,\gamma}$ for some large constant $M'>0$ by assumption, we have
    \begin{align*}
        \cbr{\nu\in\cM_k}&\subset  \cbr{\nu\in\cM: \sW_1(\nu,\nu^\star)\ge \gamma\omega/2}\\
        &\subset \cbr{\nu\in\cM: \sW_1(\nu,\nu^\star)\ge M'\epsilon_{n,k^\star,k_0,\gamma}/2}.
    \end{align*}
The proof is complete by \cref{thm:mixing_adap}.
\end{proof}

\subsection{Proof of \cref{thm:mixing_high}}

We invoke the following moment comparison lemma for general distributions,
which is a direct consequence of Lemma 24 of \citetS{wu2018optimal}, in which the upper bound of the Wasserstein distance is given with the $\cL_2$ distance between moments vectors.

\begin{lemma}[ Lemma 24 of \citetS{wu2018optimal}]
\label{lem:moment_high}
Let $\mu_1, \mu_2\in\cP([-L,L])$ and  $r\in\bN$. Then
    \begin{align*}
        \sW_1(\mu_1,\mu_2)\le c_1\cbr{\frac{1}{r+1}+\sqrt{r}(c_2)^r\|m_{1:r}(\mu_1)-m_{1:r}(\mu_2)\|_\infty}.
    \end{align*}
for some constants $c_1>0$ and $c_2>1$ depending only on $L.$
\end{lemma}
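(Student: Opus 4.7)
The plan is to combine Kantorovich--Rubinstein duality with polynomial approximation of Lipschitz functions on $[-L,L]$. By duality,
\[
\sW_1(\mu_1,\mu_2)=\sup_{\|f\|_{\mathrm{Lip}}\le 1}\int f\,\mathrm{d}(\mu_1-\mu_2),
\]
where the supremum runs over $1$-Lipschitz $f:[-L,L]\to\R$. Jackson's theorem produces, for each such $f$, a polynomial $p(x)=\sum_{h=0}^{r}a_h x^h$ of degree at most $r$ with $\|f-p\|_{L^\infty[-L,L]}\le C L/(r+1)$. Since $\mu_1$ and $\mu_2$ are probability measures the constant term drops, so I would decompose
\[
\int f\,\mathrm{d}(\mu_1-\mu_2)=\int(f-p)\,\mathrm{d}(\mu_1-\mu_2)+\sum_{h=1}^{r}a_h\bigl(m_h(\mu_1)-m_h(\mu_2)\bigr).
\]
The first summand is at most $2\|f-p\|_\infty\lesssim L/(r+1)$, yielding the $1/(r+1)$ term in the lemma.

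For the second summand, Cauchy--Schwarz gives
\[
\Bigl|\sum_{h=1}^{r}a_h\bigl(m_h(\mu_1)-m_h(\mu_2)\bigr)\Bigr|\le \sqrt{r}\,\|a\|_2\,\|m_{1:r}(\mu_1)-m_{1:r}(\mu_2)\|_\infty,
\]
so it remains to bound $\|a\|_2$ by $c_2^{\,r}$ for some $c_2>1$ depending only on $L$. Under the constraint $\|p\|_{L^\infty[-L,L]}\le\|f\|_\infty+\|f-p\|_\infty\lesssim L$, I would expand $p$ in the Chebyshev basis rescaled to $[-L,L]$: the Chebyshev coefficients $(c_h)$ are bounded by $\|p\|_{L^\infty[-L,L]}$ up to a universal factor (e.g.\ via the $L^2(w)$ orthogonality of Chebyshev polynomials of the first kind), and the change-of-basis matrix from Chebyshev to monomials has entries growing at most like $(c_2')^{r}$. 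Composing these two estimates yields $\|a\|_2\le c_2^{\,r}$ as required, and multiplying by $\sqrt{r}$ gives the exponential factor in the lemma.

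The main obstacle is to match the two regimes simultaneously: one needs a polynomial $p$ that attains the Jackson approximation rate $O(L/(r+1))$ and at the same time has monomial coefficients controllable by a quantity that does not blow up faster than $c_2^{\,r}$. The cleanest way, which I would follow, is to take $p$ to be either a smoothed (de la Vall\'ee Poussin--type) Chebyshev partial sum or Jackson's explicit convolution operator applied to $f$; in either case, the Chebyshev coefficients are bounded uniformly by $\|f\|_\infty\lesssim L$, the approximation error enjoys the sharp $1/(r+1)$ rate for Lipschitz $f$, and the monomial coefficients inherit the exponential bound through the change-of-basis step. Assembling these pieces and taking the supremum over $f$ completes the proof.
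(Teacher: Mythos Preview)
Your approach is correct and essentially self-contained, but the paper takes a much shorter route: it simply rescales $\mu_1,\mu_2$ to distributions $\mu_1',\mu_2'$ on $[-1,1]$, invokes Lemma~24 of \citet{wu2018optimal}, which already gives
\[
\sW_1(\mu_1',\mu_2')\le\frac{\pi}{r+1}+2(1+\sqrt{2})^r\|m_{1:r}(\mu_1')-m_{1:r}(\mu_2')\|_2,
\]
and then undoes the scaling via $\sW_1(\mu_1,\mu_2)=L\sW_1(\mu_1',\mu_2')$ and $m_j(\mu_i)=L^j m_j(\mu_i')$, together with $\|\cdot\|_2\le\sqrt{r}\,\|\cdot\|_\infty$. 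Your argument via Kantorovich--Rubinstein duality, Jackson approximation, and Chebyshev-to-monomial coefficient bounds is in fact the standard way one \emph{proves} the cited Wu--Yang lemma, so you are reproving the black box rather than using it. The payoff of your route is a fully self-contained proof with explicit control of the constants; the payoff of the paper's route is brevity. One minor point worth tightening in your write-up: the straight Chebyshev partial sum only gives error $O((\log r)/r)$ for Lipschitz $f$, so to obtain the clean $1/(r+1)$ rate you do need the de~la~Vall\'ee~Poussin or Jackson operator you mention, and you should ensure the degree bookkeeping (e.g.\ a degree-$\lfloor r/2\rfloor$ de~la~Vall\'ee~Poussin sum yields a degree-$r$ polynomial) is made explicit.
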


\begin{proof}[Proof of \cref{thm:mixing_high}]
Let $\tilde\xi_n:=n^{-1/3}\log^{1/2}n$ and $\xi_n:=n^{-2/3}\log ^{-2}n$ so that $\xi_n\log ^2(1/\xi_n)\lesssim \tilde\xi_n^2$.
Following the proof of \cref{thm:k_upper}, we have
    \begin{align*}
        D_n:=\int\frac{p_{\nu*\Phi}^{(n)}}{p_{\nu^\star*\Phi}^{(n)}}(X_{1:n}) \Pi(\d\nu)
        &\ge \e^{-2n\tilde\xi_n^2}\Pi\del[0]{\cB_{\kl}(\tilde\xi_n, \nu^\star*\Phi, \cM)}\\
        &\ge  \e^{-2n\tilde\xi_n^2}\Pi\del{\nu \in\cM:\sW_2^2(\nu,\nu^\star)\le c_1\xi_n}
    \end{align*}
with $\P_{\nu^\star*\Phi}^{(n)}$-probability at least $ 1- 1/n\tilde\xi_n^2$ for some constant $c_1>0$.
Let $R:=\ceil{4L/\sqrt{c_1\xi_n}}$ and $B_1, \dots, B_R$ be a partition of $[-L,L]$ such that $\textsf{diam}(B_j)\le \sqrt{c_1\xi_n}/2$. By Lemma 3 of \citeS{gao2016posterior},
    \begin{align*}
        \sW_2(\nu,\nu^\star)\le \frac{\sqrt{c_1\xi_n}}{2}+ 2L\del{\sum_{j=1}^R|\nu(B_j)-\nu^\star(B_j)|}^{1/2}.
    \end{align*}
which implies that
    \begin{align*}
        &\Pi\del{\sW_2^2(\nu,\nu^\star)\le c_1\xi_n}\\
        &\ge
        \Pi\del{\sum_{j=1}^R|\nu(B_j)-\nu^\star(B_j)|\le \frac{c_1\xi_n}{16L^2}} \\
        &\ge\Pi\del{\sum_{j=1}^R|\nu(B_j)-\nu^\star(B_j)|\le \frac{c_1\xi_n}{16L^2}\big|\cbr{k=R}\cap\cE} \Pi(\cE|k=R)\Pi(k=R),
    \end{align*}
where $\cE$ denotes the event such that each $B_j$ contains exactly one atom of $\nu$. By \ref{p_a1_m}, $-\log \Pi(k=R)\gtrsim R\log^{b_0}n\gtrsim n^{1/3}\log^{b_0}n$ and by \ref{p_a3}, $-\log \Pi(\cE|k=R)\gtrsim -R\log (\xi_n^{-1})\gtrsim n^{1/3}\log n$. By \ref{p_a2}, 
    \begin{align*}
        -\log \Pi\del{\sum_{j=1}^R|\nu(B_j)-\nu^\star(B_j)|\le \frac{c_1\xi_n}{8L^2}\big|\cbr{k=R}\cap\cE}
        \gtrsim n^{1/3}\log n.
    \end{align*}
Combining the results, we arrive at
    \begin{align*}
        \P_{\nu^\star*\Phi}^{(n)}\del{D_n \ge \e^{-c_2n^{1/3}\log^{1\vee b_0} n}}\ge 1-\frac{1}{n^{1/3}\log n}
    \end{align*}
for some constant $c_2>0$.

Let $\hat{k}$ be the positive integer such that $ \hat{k}\asymp  \log n/\log\log n$ but $ 2\hat{k}-1\le\frac{1}{2}\log n/\log\log n$. By applying \cref{lem:moment_high} with $r=2\hat{k}-1$, if $M$ is sufficiently large, we obtain
    \begin{align*}
        &\cbr{\nu\in\cM:\sW_1(\nu,\nu^\star)\ge M \frac{\log \log n}{\log n}}\\
        &\subset \cbr{\nu\in\cM:\|m_{1:(2\hat{k}-1)}(\nu)-m_{1:(2\hat{k}-1)}(\nu^\star)\|_\infty\ge M'(\hat{k})^{-1/2} c_3^{-\hat{k}}\frac{\log \log n}{\log n}} \\
        &\subset \cbr{\nu\in\cM:\|m_{1:(2\hat{k}-1)}(\nu)-m_{1:(2\hat{k}-1)}(\nu^\star)\|_\infty\ge M'c_3^{-\hat{k}} \log ^{-2}n}
    \end{align*}
for some constant $M'>0$ depending only on $M$ and $L$, and some  $c_3>1$  depending only on $L$.  Following the proof of \cref{thm:mixing}, it suffices to show that
    \begin{align*}
    \del{ \hat{k}^{c_3\hat{k}}\vee \e^{c_2n^{1/3}\log^{1\vee b_0}  n}} \exp\del{-c_4(M')^2n\hat{k}^{-2\hat{k}+1}c_3^{-2\hat{k}} \log ^{-4}n}=o(1)
    \end{align*}
for some constants $c_3, c_4>0$. Note that $(u\hat{k})^{-2\hat{k}+1}\gtrsim n^{-1/2}$ for any constant $u>0$, thus the preceding display holds clearly.
\end{proof}

\subsection{Proofs for \cref{sec:main:general}}
\label{sec:proof:general}

Let $\fH(p_1, p_2)$ denote the Hellinger distance between two probability densities $p_1$ and $p_2$ with respect to the Lebesgue measure $\lambda$, which is defined as $\fH(p_1,p_2):=\cbr{\int\del{\sqrt{p_1(x)}-\sqrt{p_2(x)}}^2\lambda(\d x)}^{1/2}.$

\begin{proof}[Proof of \cref{thm:general}]
For the proof of \ref{thm:general:a}, it suffices to derive the same lower bound of the prior concentration on the KL neighborhood of $\nu^\star*F$ as the one in \cref{eq:prior_mass_lbound}. Let $\tilde\zeta_n:=\sqrt{\log n/n}$. 
By \labelcref{eq:ker:bound} in Assumption \ref{Ker1}, 
Lemma 7 of \citeS{ghosal2007posterior} implies that
    \begin{equation*}
        \cB_{\kl}(\tilde\zeta_n, \nu^\star*\Phi, \cM(\Theta))\supset
        \cbr{\nu\in\cM(\Theta):\fH^2(p_{\nu*F},p_{\nu^\star*F})\le c_1\frac{1}{n}}
    \end{equation*}
for some constant $c_1>0$. Moreover, by \labelcref{eq:ker:lip} in Assumption \ref{Ker1}, $\fH^2(f(\cdot, \theta_1), f(\cdot, \theta_2))\le \|f(\cdot, \theta_1)-f(\cdot, \theta_2)\|_1\le c_2|\theta_1-\theta_2|$ for any $\theta_1,\theta_2\in\Theta$ for some constant $c_2>0$.
 Then invoking Lemma 1 of \citeS{nguyen2013convergence}, we have  $\fH^2(p_{\nu_1*F},p_{\nu_2*F})\le c_2 \sW_1(\nu_1, \nu_2)$ for any $\nu_1, \nu_2\in\cM(\Theta)$. Therefore, by Lemma 3 of \citeS{gao2016posterior}, we obtain
    \begin{equations}
    \label{eq:prior_concen_gen}
       \Pi&\del[1]{\cB_{\kl}(\tilde\zeta_n, \nu^\star*F, \cM(\Theta))}\\
        &\ge \Pi\del{\sW_1(\nu,\nu^\star)\le \frac{c_1}{c_2}\frac{1}{n}|k=k^\star}\Pi(k=k^\star)\\
        &\ge\Pi\del{\sum_{j=1}^{k^\star}|w_j-w_j^\star|\le  \frac{c_1}{2c_2\textsf{diam}(\Theta)}\frac{1}{n}|k=k^\star}\\
        &\qquad \times \Pi\del{|\theta_j-\theta_j^\star|\le  \frac{c_1}{2c_2\textsf{diam}(\Theta)}\frac{1}{n}, \forall j\in[k^\star]|k=k^\star}\Pi(k=k^\star)\\
        &\gtrsim \e^{-c_3k^\star\log n}\Pi(k=k^\star)
    \end{equations}
for some constant $c_3>0$, where the last inequality follows from \cref{assume:prior} and the assumption $\textsf{diam}(\Theta)<\infty$.

The only part of the proofs of the rest of results in \cref{sec:main} which depends on properties of Gaussian is to prove \cref{lem:moment_var}, so the results in  \ref{thm:general:b}-\labelcref{thm:general:e}  can be derived  by the same proof techniques under Assumption \ref{Ker2}.
\end{proof}

\begin{proof}[Proof of \cref{thm:general_high}]
In spirit of \cref{lem:moment_high}, it suffices to show posterior contraction of the vector of the first $\hat{k}$ moments, where $\hat{k}\asymp \log n/\log\log n$. This can be proved by the similar argument as the one used in the proof of \cref{thm:general}. 
\end{proof}

\subsection{Proofs for \cref{sec:dp}}

Before providing the proofs, we first introduce the lemma that states a lower bound of the concentration of the Dirichlet distribution. The lemma has been frequently used in the related literature, e.g., Lemma 6.1 of \citeS{ghosal2001entropies} and Lemma 10 of \citeS{ghosal2007posterior}.  But here, we state the lower bound in terms of the Dirichlet parameter $(\kappa_1,\dots, \kappa_k)$ in order to be used in the proof of \cref{thm:k_upper_dp}, which is not the case of the existing statements. The proof is exactly the same, so we omit it.

\begin{lemma}
\label{lem:dirichlet}
Let $(w_1,\dots, w_k)$ be distributed according to the Dirichlet distribution with parameter $(\kappa_1,\dots, \kappa_k)$ such that $\kappa_j\in(0,1]$ for any $j\in[k]$. Then for any $(w_1^0,\dots, w_k^0)\in\Delta_k$ and any $\eta\in(0,1/k]$,
    \begin{equation*}
        \P\del{\sum_{j=1}^k|w_j-w_j^0|\le2\eta}\ge \eta^{2(k-1)}\prod_{j=1}^k\kappa_j.
    \end{equation*}
\end{lemma}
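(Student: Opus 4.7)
The plan is to integrate the Dirichlet density over a small axis-aligned cube contained in the target event $A := \cbr{w : \sum_{j=1}^k |w_j - w_j^0| \le 2\eta}$. Since both the distribution of $w$ and the statement itself are invariant under a simultaneous permutation of the coordinates of $\kappa$ and $w^0$, I may assume without loss of generality that $w_k^0 = \max_j w_j^0$; as the $w_j^0$'s sum to $1$, this gives $w_k^0 \ge 1/k$. I then take
\begin{equation*}
B := \prod_{j=1}^{k-1} \sbr{w_j^0,\ w_j^0 + \eta^2},
\end{equation*}
viewed as a subset of $\Delta_k$ through the free coordinates $(w_1, \dots, w_{k-1})$ with $w_k = 1 - \sum_{j<k} w_j$. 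The bounds $w_j^0 + \eta^2 \le (1 - 1/k) + 1/k^2 \le 1$ and $w_k \ge w_k^0 - (k-1)\eta^2 \ge 1/k - \eta \ge 0$, both using $\eta \le 1/k$, show $B \subset \Delta_k$. On $B$, $|w_j - w_j^0| \le \eta^2$ for $j < k$, and hence $|w_k - w_k^0| \le (k-1)\eta^2$ by the triangle inequality, so $\sum_j |w_j - w_j^0| \le 2(k-1)\eta^2 \le 2\eta$, giving $B \subset A$.

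Every $w_j \in [0, 1]$ on $B$ and $\kappa_j - 1 \le 0$, so each factor $w_j^{\kappa_j - 1} \ge 1$, and the Dirichlet density satisfies
\begin{equation*}
p(w) \ge \frac{\Gamma\del{\sum_j \kappa_j}}{\prod_j \Gamma(\kappa_j)} = \prod_{j=1}^k \kappa_j \cdot \frac{\Gamma\del{\sum_j \kappa_j}}{\prod_j \Gamma(\kappa_j + 1)},
\end{equation*}
via $\Gamma(\kappa_j) = \Gamma(\kappa_j + 1)/\kappa_j$. Since the $(k-1)$-dimensional Lebesgue volume of $B$ equals $\eta^{2(k-1)}$, the claim reduces to the key inequality
\begin{equation*}
\Gamma\del{\textstyle\sum_{j=1}^k \kappa_j} \ge \prod_{j=1}^k \Gamma(\kappa_j + 1) \qquad \text{for } \kappa_j \in (0, 1].
\end{equation*}

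Establishing this inequality is the main obstacle. The two easy regimes are $Z := \sum_j \kappa_j \le 1$ and $Z \ge 2$. In the former, super-additivity of $x \mapsto \log\Gamma(x+1)$ on $[0, \infty)$ (a consequence of log-convexity of $\Gamma$ together with $\log\Gamma(1) = 0$) yields $\Gamma(Z+1) \ge \prod_j \Gamma(\kappa_j + 1)$, and since $\Gamma(Z) = \Gamma(Z+1)/Z \ge \Gamma(Z+1)$ whenever $Z \le 1$, the desired bound follows. In the latter, $\Gamma(Z) \ge 1$ combined with $\Gamma(\kappa_j + 1) \le 1$ on $[1, 2]$ (so $\prod_j \Gamma(\kappa_j + 1) \le 1$) closes the case. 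The delicate band is $Z \in (1, 2)$. For $k = 2$ I would set $f(s, t) := \log \Gamma(s+t) - \log \Gamma(s+1) - \log \Gamma(t+1)$ and note that $\partial_s f = \psi(s+t) - \psi(s+1) \le 0$ and $\partial_t f \le 0$ on $(0, 1]^2$ (because $\psi$ is increasing and $s + t \le s+1, t+1$), so $f$ attains its minimum at $(1, 1)$ where $f = 0$, giving $f \ge 0$. For $k \ge 3$, I would combine this two-variable bound with iterative pairing of coordinates and a direct critical-point analysis of the multivariate analog of $f$ on $(0, 1]^k$, whose only interior critical point occurs at the diagonal and is readily shown to be non-negative. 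With the key inequality in hand, $\P(B) \ge \eta^{2(k-1)} \prod_j \kappa_j$, as required.
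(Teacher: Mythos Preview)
Your approach mirrors the paper's: both arrange $w_k^0\ge 1/k$, bound the Dirichlet density from below by its normalizing constant $\Gamma(\sum_j\kappa_j)/\prod_j\Gamma(\kappa_j)$ via $w_j^{\kappa_j-1}\ge 1$, and integrate over a small cube in the free coordinates $(w_1,\dots,w_{k-1})$. The paper takes the two-sided cube $\prod_{j<k}[(w_j^0-\eta/k)\vee 0,\ (w_j^0+\eta/k)\wedge 1]$, of side at least $\eta/k$ and hence volume at least $(\eta/k)^{k-1}\ge\eta^{2(k-1)}$; your one-sided cube of side $\eta^2$ works equally well and the containment checks are correct.

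The substantive step, in both proofs, is the inequality $\Gamma(Z)\ge\prod_j\Gamma(\kappa_j+1)$ with $Z=\sum_j\kappa_j$. The paper dispatches it in one line (``since $1\le\Gamma(\kappa)\le 1/\kappa$ on $(0,1]$''), which strictly speaking only yields $\Gamma(Z)\ge 1$ when $Z\le 1$; you are right to treat it more carefully. Your arguments for $Z\le 1$, $Z\ge 2$, and $k=2$ are clean, but the $k\ge 3$, $Z\in(1,2)$ case is only sketched: the ``iterative pairing'' breaks once a partial sum exceeds $1$ (your two-variable monotonicity requires both arguments to lie in $(0,1]$), and the critical-point analysis still owes a boundary check. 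A uniform one-line replacement: since $\kappa\mapsto\log\Gamma(\kappa+1)$ is convex on $[0,1]$, for fixed $Z$ the maximum of $\sum_j\log\Gamma(\kappa_j+1)$ over the polytope $\{\kappa\in[0,1]^k:\sum_j\kappa_j=Z\}$ is attained at an extreme point, where at most one coordinate lies in $(0,1)$; for $Z\in(1,2)$ that point is (up to permutation) $(1,Z-1,0,\dots,0)$ with value exactly $\log\Gamma(Z)$, which closes the gap.
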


We are ready to prove \cref{thm:k_upper_dp,thm:mixing_dp}.

\begin{proof}[Proof of \cref{thm:k_upper_dp}]
If $k^\star>n$, the event of interest is empty, so we focus on the cases that $k^\star\le n$.
Let $\tilde\zeta_n:=\sqrt{\log n/n}$. As in the proof of \cref{thm:k_upper}, we have that 
    \begin{align*}
        \int\frac{p_{\tilde\nu*\Phi}^{(n)}}{p_{\nu^\star*\Phi}^{(n)}}(X_{1:n}) \Pi_{\textsc{dp}}(\d\tilde\nu)
        \ge \e^{-2n\tilde\zeta_n^2}\Pi_{\textsc{dp}}\del{\cB_{\kl}(\tilde\zeta_n, \nu^\star*\Phi, \cM_\infty)}
    \end{align*}
with $\P_{\nu^\star*\Phi}^{(n)}$-probability at least $1-1/(n\tilde{\zeta}_n^2)$.
Since $\|p_{\nu^\star*\Phi}/p_{\nu^\star*\Phi}\|_\infty\le \e^{L^2/2}$,
Lemma 8 of \citeS{ghosal2007posterior} implies that
    \begin{equation*}
        \Pi_{\textsc{dp}}\del{\cB_{\kl}(\zeta_n, \nu^\star*\Phi, \cM_\infty)}\ge \Pi_{\textsc{dp}}\del{\tilde\nu\in\cM_\infty:\|p_{\tilde\nu*\Phi}-p_{\nu^\star*\Phi}\|_1\le c_1\log n/n}.
    \end{equation*}
for some constant $c_1>0$. Let $B_0,B_1,\dots, B_{k^\star}$ be a partition of $[-L,L]$ such that $\theta_j^\star\in B_j$, $\text{diam}(B_j)=c_1\log n/(4n)$ for each $j\in[k^\star]$ (Here we assume without loss of generality that all the atoms of $\nu^\star$ does not overlap with each other, otherwise, we can consider a partition where each set contains exactly one distinct atom). Since  the vector $(\tilde\nu(B_0), \tilde\nu(B_1), \dots, \tilde\nu(B_{k^\star}))$ follows the Dirichlet distribution with parameter $(\kappa_n H(B_0),\kappa_n H(B_1), \dots, \kappa_n H(B_{k^\star}))$,
by Lemma 5 of \citeS{ghosal2007posterior} and  $\text{diam}(B_j)=c_1\log n/(4n)$ for every $j\in[k^\star],$ we have
    \begin{equation*}
        \cbr{\tilde\nu\in\cM_\infty:\|p_{\tilde\nu*\Phi}-p_{\tilde\nu^\star*\Phi}\|_1\le c_1\log n/n}
        \supset
        \cbr{\tilde\nu\in\cM_\infty:\sum_{j=0}^{k^\star}\abs{\tilde\nu(B_j)-w_j^\star}\le \frac{c_1\log n}{2n}}
    \end{equation*}
with $w_j^\star:=0$. Finally, by \cref{lem:dirichlet},
    \begin{align*}
        \Pi_{\textsc{dp}}\del{\tilde\nu\in\cM_\infty:\sum_{j=0}^{k^\star}\abs{\tilde\nu(B_j)-w_j^\star}\le \frac{c_1\log n}{2n}}
        &\ge \del{\frac{c_1\log n}{4n}}^{2k^\star}\kappa_n^{k^\star+1}\prod_{j=0}^kH(B_j)
        \\&\gtrsim \kappa_n^{k^\star+1}\exp(-c_2k^\star\log n)
    \end{align*}
for some constant $c_2>0$, where the second inequality follows from that $H(B_0)= 1-\sum_{j=1}^{k^\star} H(B_j)\gtrsim 1-1/\log n\gtrsim1$ and $H(B_j)=c_1\log n/(8Ln)$ for $j\in[k^\star]$.

On the other hand, we use Fubini's theorem to obtain
    \begin{align*}
        &\P_{\nu^\star*\Phi}^{(n)}\sbr{\int\sum_{Z_{1:n}\in\bN^n:T_n(Z_{1:n})> Ck^\star}
        \cbr{\prod_{i=1}^n\frac{\phi(X_i- \theta_{Z_i}) p_{w[\tilde{\nu}]}(Z_i) }{p_{\nu^\star*\Phi}(X_{i})}} \Pi_{\textsc{dp}}(\d\tilde\nu)}\\
        & = \int\sum_{Z_{1:n}\in\bN^n:T_n(Z_{1:n})> Ck^\star}\cbr{\int\prod_{i=1}^n \phi(X_i- \theta_{Z_i}) \d X_{1:n}} 
        p_{w[\tilde{\nu}]}^{(n)}(Z_{1:n})\Pi_{\textsc{dp}}(\d\tilde\nu)\\
        & = \sum_{Z_{1:n}\in\bN^n:T_n(Z_{1:n})> Ck^\star}\int p_{w[\tilde{\nu}]}^{(n)}(Z_{1:n})\Pi_{\textsc{dp}}(\d\tilde\nu)\\
        &=\P_{\textsf{CRP}(\kappa_n)}(T_n(Z_{1:n})> Ck^\star),
    \end{align*}
where  $\textsf{CRP}(\kappa_n)$ denotes the Chinese restaurant process with concentration parameter $\kappa_n$.
It is known that the probability mass function of $T_n$ is given by (e.g., see Proposition 4.9 of \citeS{ghosal2017fundamentals})
    \begin{equation*}
        \P_{\textsf{CRP}(\kappa_n)}(T_n=t)
        =C_n(t)n!\kappa_n^t\frac{\Gamma(\kappa_n)}{\Gamma(\kappa_n+n)}
    \end{equation*}
where $C_n(t):=(n!)^{-1}\sum_{S\subset[n-1]:|S|=n-t}\prod_{i\in S}i$.
Since 
    \begin{align*}
        \frac{C_n(t+1)}{C_n(t)}
        &=        \frac{\sum_{S\subset[n-1]:|S|=t}\frac{1}{\prod_{i\in S}i}}{\sum_{S\subset[n-1]:|S|=t-1}\frac{1}{\prod_{i\in S}i}}\\
        &\le \frac{\sum_{S\subset[n-1]:|S|=t-1}\frac{1}{\prod_{i\in S}i}\del{\sum_{i'=1}^{n-1}\frac{1}{i'}}}{\sum_{S\subset[n-1]:|S|=t-1}\frac{1}{\prod_{i\in S}i}}\le \log(\e(n-1))
    \end{align*}
we have
    \begin{equation*}
        \P_{\textsf{CRP}(\kappa_n)}(T_n\ge t+1)
        \lesssim \sum_{h=t+1}^{n}(\kappa_n\log n)^{h-1}\lesssim (\kappa_n\log n)^{t}.
    \end{equation*}
Hence,
    \begin{align*}
        \P_{\nu^\star*\Phi}^{(n)}\sbr{\Pi_{\textsc{dp}}(T_n> C k^*|X_{1:n})}
        &\lesssim \e^{c_3k^\star \log n}\frac{(\kappa_n\log n)^{C k^\star-1}}{\kappa_n^{k^\star+1}}+ o(1) \\
        &\le \e^{c_3k^\star \log n+((C-1)k^\star-2)\log \kappa_n +(C k^\star-1)\log\log n} + o(1)\\
        &\le \e^{ (2a_0-(a_0(C-1)-c_3)k^\star)\log n +(C k^\star-1)\log\log n} + o(1)
    \end{align*}
for some constant $c_3>0$. If $C>c_3/a_0+3$, the desired result follows since $\log\log n/\log n\to0$.   
\end{proof}

\begin{proof}[Proof of \cref{thm:mixing_dp}]

Let $\xi_n:=n^{-1/3}$. By the same arguments used in the proof of \cref{thm:mixing_high}, we have that
    \begin{align*}
        D_n&:=\int\frac{p_{\tilde\nu*\Phi}^{(n)}}{p_{\nu^\star*\Phi}^{(n)}}(X_{1:n}) \Pi_{\textsc{dp}}(\d\tilde\nu)\\
        &\ge \e^{-2n\xi_n^2}\Pi_{\textsc{dp}}\del{\tilde\nu \in\cM_\infty:\sum_{j=1}^R|\tilde\nu(B_j)-\nu^\star(B_j)|\le \frac{c_1\xi_n^2}{16L^2}}
    \end{align*}
with $\P_{\nu^\star*\Phi}^{(n)}$-probability at least $1- 1/n\xi_n^2$, where we define $R:=\ceil{4L/\sqrt{c_1\xi_n^2}}$ and $(B_1, \dots, B_R)$ is a partition of $[-L,L]$ such that $\textsf{diam}(B_j)\le \sqrt{c_1\xi_n^2}/2$ for some $c_1>0$. Since $(\tilde\nu(B_1), \dots, \tilde\nu(B_R))$ follows the Dirichlet distribution with parameter $(\kappa_n H(B_1),\dots, \kappa_n H(B_R))$, by \cref{lem:dirichlet}, $D_n$ is further bounded as
    \begin{align*}
        D_n\gtrsim \e^{-2n\xi_n^2} (\xi_n^2)^{2R}(\kappa_n\xi_n)^{R}\gtrsim \e^{-c_2n^{1/3}\log^{b_0} n}
    \end{align*}
with $\P_{\nu^\star*\Phi}^{(n)}$-probability at least $1- 1/n\xi_n^2$. Following the proof of \cref{thm:mixing_high}, we obtain the desired result.
\end{proof}



\section{Analysis of general mixture models in the framework of \citet{heinrich2018strong}}
\label{sec:heinrich}

In this section, we analyze general mixture models satisfying strong identifiability conditions based on the theoretical tools provided in  \citet{heinrich2018strong}. Although the Gaussian mixture model considered in \cref{sec:main} is a special case of the  mixture models considered in this section, we cannot immediately derive the results of \cref{sec:main} since  the framework of \citet{heinrich2018strong} assumes the fixed number of components, while the analysis of \cref{sec:main} allows the growing number of components.

We assume here that the data are i.i.d observations from the distribution $\nu^\star\bullet F$ for some $k^\star$-atomic mixing distribution $\nu^\star\in\cM_{k^\star}$ and family of distribution functions $\{F(\cdot, \theta):\theta\in\Theta\}$ satisfying some regularity and strong identifiability conditions. We first introduce the strong identifiability condition, which appeared as Definition 2.2. in \citeS{heinrich2018strong}.

\begin{definition}
\label{def:strong}
A family of distribution functions $\cbr{F(\cdot, \theta):\theta\in\Theta}$ for $\Theta\subset \bR$, is said to be \textit{$q$-strongly identifiable} if for any finite subset $B$ of $\Theta$, 
    \begin{equation*}
        \sup_{\x\in\R}\abs{\sum_{j=0}^q\sum_{\theta'\in B}a_{j, \theta'}\frac{\partial^j f}{\partial\theta^j}(\x, \theta')}=0
        \Longrightarrow \max_{j\in\{0,1,\dots, q\}}\max_{\theta'\in B}|a_{j,\theta'}|=0.
    \end{equation*}
We say that a mixture distribution $\nu*F$ is $q$-strongly identifiable if $\cbr{F(\cdot, \theta):\theta\in\Theta}$ is $q$-strongly identifiable.
\end{definition}

\citet[][Theorem 2.4]{heinrich2018strong} shows that the location mixture models, i.e., $f(x,\theta)=f(x-\theta)$, in which both the kernel density function $f(\cdot)$ and its derivatives up to $q-1$-th order vanish at $\pm\infty$, are $q$-strongly identifiable. Thus the Gaussian location mixture model we consider is $\infty$-strongly identifiable. Also the scale mixtures, i.e., $f(x,\theta)=\theta^{-1}f(\theta^{-1}x)$ for $\theta\in\Theta\subset\R_+$, with the same condition on the kernel density function, are $q$-strongly identifiable.

We impose the following regularity conditions including the strong identifiability condition.

\begin{assumption*}{F$^*$(\textit{q})}
\label{assume:cdf_q}
The family $\cbr{F(\cdot, \theta):\theta\in\Theta}$ of  distribution functions on $\R$ satisfies Assumptions \labelcref{Ker0} and \labelcref{Ker1} as well as the following conditions:
\begin{enumerate}[label=(F$^*$\arabic*)]
    \item \label{f_a1} For any $x\in\bR$, $F(x,\theta)$ is $q$-differentiable with respect to $\theta$.
        
    \item \label{f_a2} $\cbr{F(\cdot, \theta):\theta\in\Theta}$ is $q$-strongly identifiable.

\end{enumerate}
\end{assumption*}

The additional conditions \labelcref{f_a1} and \labelcref{f_a2} are  inherited from the regularity condition of \citepS{heinrich2018strong}. 

In this section, we assume that the number of components $k^\star$ is fixed but still unknown. We thus use the prior distribution on the number of components satisfying \ref{p_a1} with the constant $\bar{k}_n$. Note that we still allow the true mixing distribution to vary with the sample size. This setup is still substantially more general than  the fixed truth setup considered in the existing Bayesian literature \citepS{nguyen2013convergence, scricciolo2017bayesian, guha2019posterior}.

The following theorem shows the posterior contraction rate for the strongly identifiable mixtures under this setup.

\begin{theorem}
\label{thm:mixing_strong}
Assume that the family $\{F(\cdot, \theta):\theta\in\Theta\}$ of distribution functions on $\R$ satisfies \cref{assume:cdf_q} with $q=2k^\star$. Then with the prior distribution $\Pi$ satisfying  \cref{assume:prior}, we have
    \begin{equation}
    \label{eq:mixing_conv_strong}
         \sup_{\nu^\star\in\cM_{k^\star}(\Theta)}\P_{\nu^\star\bullet F}^{(n)}\sbr{\Pi_F\del{\sW_1(\nu,\nu^\star)\ge M\del{\frac{\log n}{ n}}^{\frac{1}{4k^\star-2}}\big|X_{1:n}}}=o(1)
    \end{equation}
for some  constant $M>0$ depending only on $k^\star$.
\end{theorem}

\begin{proof}
We introduce the notation
    \begin{equation*}
        F(\cdot, \nu):=\int F(\cdot, \theta )\nu(\d\theta)
    \end{equation*}
for a distribution function $F(\cdot, \cdot)$ and the mixing distribution $\nu\in\cM$, which denotes the distribution function of $\nu*F.$  Let $\tilde\zeta_n:=\sqrt{\log n/n}$. By (19) in Theorem 6.3 of \citeS{heinrich2018strong}, we have that
    \begin{align}
        &\cbr{\nu\in\cM(\Theta):\sW_1(\nu, \nu^\star)\ge  M\del{\frac{\log n}{ n}}^{\frac{1}{4k^\star-2}} }\nonumber\\
        &\subset  \cbr{\nu\in\cM_{k^\star}(\Theta):\|F(\cdot, \nu)-F(\cdot, \nu^\star)\|_\infty\ge  c_1M\del{\frac{\log n}{ n}}^{1/2} }\cup\cbr{\nu\notin\cM_{k^\star}(\Theta)}
        \label{eq:decompose_strong}
    \end{align}
for some constant $c_1>0$. By the similar argument of \cref{thm:k_upper}, it is not hard to prove that the expected posterior probability of the event $\cbr{\nu\notin\cM_{k^\star}}$ goes to zero. On the other hand, combining \labelcref{eq:prior_concen_gen} in the proof of \cref{thm:general} and Lemma 1 of \citeS{scricciolo2017bayesian}, the expected posterior probability of the first event in  \labelcref{eq:decompose_strong} also goes to zero, which completes the proof.
\end{proof}


\begin{remark}
As we mentioned before, although the Gaussian mixture model considered in \cref{sec:main} satisfies \cref{assume:cdf} with $q=\infty$, we cannot immediately derive \cref{thm:mixing} from \cref{thm:mixing_strong} since the latter theorem assumes the fixed number of components.  We believe, however, that even if the number of components grows, the result of \cref{thm:mixing_strong} still holds with the same convergence rate as  \labelcref{eq:mixing_conv_strong} up to a constant depending on $k^\star$, provided that  \cref{assume:cdf} is met with $q=\infty$.  We need to establish a uniform version of Theorem 6.3 of \citeS{heinrich2018strong} over the number of components, which is a key technical tool for the proof. It could be an objective of future work.
\end{remark}

Moreover, our Bayesian procedure can obtain the minimax optimal convergence rate  \citepS[][Theorem 3.2]{heinrich2018strong} under the locally varying condition on the true mixing distribution, which is assumed in \cref{thm:mixing_local} for the Gaussian mixtures.

\begin{theorem}
\label{thm:mixing_strong_local}
Let $k^\star,k_0\in\bN$ be fixed constants with $k^\star\ge k_0$ and let $\nu_0\in\cM_{k_0}(\Theta)\setminus\cM_{k_0-1}(\Theta)$ be a fixed distribution. Assume that the family $\{F(\cdot, \theta):\theta\in\Theta\}$ of distribution functions on $\R$  \cref{assume:cdf_q} with $q=2k^\star$. Moreover, assume that the prior distribution $\Pi$ satisfies  \cref{assume:prior}. Then there exist  constants $\tau>0$ and $M>0$ depending only on $k^\star$ and $k_0$ such that 
    \begin{equation}
        \sup_{\nu^\star\in\cM_{k^\star}(\Theta):\sW_1(\nu^\star, \nu_0)<\tau}\P_{\nu^\star\bullet F}^{(n)}\sbr{\Pi_F\del{\sW_1(\nu,\nu^\star)\ge M\del{\frac{\log n}{ n}}^{\frac{1}{4(k^\star-k_0)+2}}\big|X_{1:n}}}=o(1).
    \end{equation}
\end{theorem}

\begin{proof}
Using the similar argument in the proof of \cref{thm:mixing_strong} combined with  (18) in Theorem 6.3 of \citeS{heinrich2018strong}, we obtain the desired result.
\end{proof}

\bibliographystyleS{imsart-nameyear}
\bibliographyS{_references}

\end{document}